\title{Finding Matching Cuts in $H$-Free Graphs}
\author{Felicia Lucke}{Department of Informatics, University of Fribourg, Fribourg, Switzerland}{felicia.lucke@unifr.ch}{https://orcid.org/0000-0002-9860-2928}{}
\author{Daniël Paulusma}{Department of Computer Science, Durham University, Durham, UK}{daniel.paulusma@durham.ac.uk}{https://orcid.org/0000-0001-5945-9287}{}
\author{Bernard Ries}{Department of Informatics, University of Fribourg, Fribourg, Switzerland}{bernard.ries@unifr.ch}{https://orcid.org/0000-0003-4395-5547}{}
\authorrunning{F. Lucke and D. Paulusma and B.Ries} 
\keywords{matching cut; perfect matching; $H$-free graph; computational complexity} 
\newcommand{\set}[1]{\ensuremath{ \left\lbrace #1 \right\rbrace }}
\definecolor{nicered}{RGB}{204,0,0}
\definecolor{lightblue}{RGB}{153,204,255}
\definecolor{nicegreen}{RGB}{0,153,0}
 \tikzstyle{vertex}=[thin,circle,inner sep=0.cm, minimum size=1.7mm, fill=black, draw=black]
 \tikzstyle{bvertex}=[thin,circle,inner sep=0.cm, minimum size=1.7mm, fill=lightblue, draw=lightblue]
 \tikzstyle{rvertex}=[thin,circle,inner sep=0.cm, minimum size=1.7mm, fill=nicered,draw=nicered]
 \tikzstyle{hedge}=[thick, draw = gray]
 \tikzstyle{medge}=[ultra thick, draw = black]
 \tikzstyle{gedge}=[ultra thick, draw = nicered]
  \tikzstyle{rededge}=[thick, draw = nicered]
 \tikzstyle{bluedge}=[thick, draw = lightblue]
 \tikzstyle{grnedge}=[thick, draw = nicegreen]
\newcommand{\NP}{{\sf NP}}
\newcommand{\ssi}{\subseteq_i}
\newcommand{\si}{\supseteq_i}
\newtheorem{claim1}{Claim}[theorem]
\begin{document}

\maketitle

\begin{abstract}
The well-known \NP-complete problem {\sc Matching Cut} is to decide if a graph has a matching that is also an edge cut of the graph. We prove new complexity results for {\sc Matching Cut} restricted to $H$-free graphs, that is, graphs that do not contain some fixed graph~$H$ as an induced subgraph. We also prove new complexity results for two recently studied variants of {\sc Matching Cut}, on $H$-free graphs. The first variant requires that the matching cut must be extendable to a perfect matching of the graph. The second variant requires the matching cut to be a perfect matching. In particular, we prove that there exists a small constant~$r>0$ such that the first variant is \NP-complete for $P_r$-free graphs. This addresses a question of Bouquet and Picouleau (arXiv, 2020). For all three problems, we give state-of-the-art summaries of their computational complexity for $H$-free graphs.
\end{abstract}

\section{Introduction}\label{s-intro}

Cut sets and connectivity are central topics in algorithmic graph theory.
 We consider edge cuts in graphs that have some additional structure. The common property of these cuts is that the edges in them must form a matching. Formally, consider a connected graph~$G=(V,E)$. A set $M\subseteq E$ is a {\it matching} if no two edges in $M$ have a common end-vertex. A set $M\subseteq E$ is an {\it edge cut}, if $V$ can be partitioned into sets $B$ and $R$ such that $M$ consists of all the edges with one end-vertex in $B$ and the other one in $R$. Now, $M$ is a {\it matching cut} if $M$ is a matching that is also an edge cut; see also Figure~\ref{f-examples}.
 Matching cuts are well studied due to their applications in number theory~\cite{Gr70}, graph drawing~\cite{PP01}, graph homomorphisms~\cite{GPS12}, edge labelings~\cite{ACGH12} and ILFI networks~\cite{FP82}.
The corresponding decision problem, which asks whether a given connected graph has a matching cut, is known as {\sc Matching Cut}.

We also consider two natural variants of {\sc Matching Cut}. First, let $G$ be a connected graph that has a {\it perfect} matching $M$, that is, every vertex of $G$ is incident to an edge of $M$. If $M$ contains a matching cut $M'$ of $G$, then $M$ is a {\it disconnected perfect matching} of $G$; see again Figure~\ref{f-examples} for an example. 
The problem {\sc Disconnected Perfect Matching} is to decide if a graph has a disconnected perfect matching. Every yes-instance of {\sc Disconnected Perfect Matching} is a yes-instance of {\sc Matching Cut}, but the reverse might not be true; for example, the $3$-vertex path has a matching cut but no (disconnected) perfect matching.

Suppose now that we search for a matching cut with a maximum number of edges, or for a disconnected perfect matching with a matching cut that is as large as possible. In both settings, the extreme case is when the matching cut is a perfect matching itself. Such a matching cut is called {\it perfect}; see Figure~\ref{f-examples}. By definition, a perfect matching cut is a disconnected perfect matching, but the reverse might not hold: take the cycle on six vertices which has several disconnected perfect matchings but no perfect matching cut.
The problem {\sc Perfect Matching Cut} is to decide if a connected graph has a perfect matching cut.

\begin{figure}[ht]
\centering
\begin{tikzpicture}
\tikzstyle{cutedge}=[black, ultra thick]
\tikzstyle{matchedge}=[black, ultra thick, dotted]

\begin{scope}[shift={(10,0)}, scale=0.75]
\node[rvertex](p1) at (0,0){};
\node[bvertex](p2) at (1,0){};
\node[bvertex](p3) at (2,0){};
\node[rvertex](p4) at (3,0){};
\node[rvertex](p5) at (4,0){};
\node[bvertex](p6) at (5,0){};

\draw[cutedge](p1)--(p2);
\draw[hedge](p2)--(p3);
\draw[cutedge](p3)--(p4);
\draw[hedge](p4)--(p5);
\draw[cutedge](p5)--(p6);
\end{scope}

\begin{scope}[shift={(5,0)}, scale=0.75]
\node[rvertex](p1) at (0,0){};
\node[bvertex](p2) at (1,0){};
\node[bvertex](p3) at (2,0){};
\node[bvertex](p4) at (3,0){};
\node[bvertex](p5) at (4,0){};
\node[rvertex](p6) at (5,0){};

\draw[cutedge](p1)--(p2);
\draw[hedge](p2)--(p3);
\draw[hedge](p3)--(p4);
\draw[hedge](p4)--(p5);
\draw[cutedge](p5)--(p6);
\end{scope}

\begin{scope}[shift={(0,0)}, scale=0.75]
\node[rvertex](p1) at (0,0){};
\node[bvertex](p2) at (1,0){};
\node[bvertex](p3) at (2,0){};
\node[bvertex](p4) at (3,0){};
\node[rvertex](p5) at (4,0){};
\node[rvertex](p6) at (5,0){};

\draw[cutedge](p1)--(p2);
\draw[hedge](p2)--(p3);
\draw[hedge](p3)--(p4);
\draw[cutedge](p4)--(p5);
\draw[hedge](p5)--(p6);
\end{scope}

\end{tikzpicture}
\caption{\label{fig-intro-variants}The graph $P_6$ with a matching cut that is not contained in a disconnected perfect matching (left), a matching cut that is properly contained in a disconnected perfect matching (middle) and a perfect matching cut (right). In each figure, thick edges denote matching cut edges.}\label{f-examples}
\end{figure}
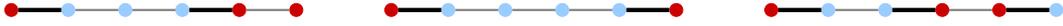

\noindent
All three problems are known to be \NP-complete, as we will explain in more detail below. Hence, it is natural to restrict the input to some special graph class to obtain a better understanding of the computational hardness of some problem, or some set of problems. In particular, jumps in complexity can be large and unexpected. To give an extreme example~\cite{MPS22}, there exist problems that are PSPACE-complete in general but constant-time solvable for every other {\it hereditary} graph class, i.e., that is closed under vertex deletion.
 
It is readily seen that a graph class is hereditary if and only if it can be characterized by a set of forbidden induced subgraphs. A well-known example of a family of hereditary graph classes is obtained when we forbid a single subgraph $H$. That is, a graph~$G$ is {\it $H$-free} if $G$ does not contain $H$ as induced subgraph, or equivalently, if $G$ cannot be modified into~$H$ by a sequence of vertex deletions. The class of $H$-free graphs has proven to be an ideal testbed for a systematic study into the complexity of many classical graph problems and graph parameters, as can not only be seen from surveys for e.g. {\sc Colouring}~\cite{GJPS17,RS04} or clique-width~\cite{DJP19}, but also from extensive studies for specific $H$-free graphs, e.g. bull-free graphs~\cite{Ch12} or claw-free graphs~\cite{CS05,HMLW11}.  As such, we will also focus on $H$-free graphs in this paper. Before presenting our results we first discuss relevant known results.  

\subsection{Known Results}

Out of the three problems, {\sc Matching Cut} has been studied most extensively. Already in the eighties, Chvátal~\cite{Ch84} proved that \textsc{Matching Cut} is \NP-complete. Afterwards a large number of complexity results were proven for special graph classes. Here, we only discuss those results that are relevant for our context, whereas results for non-hereditary graph classes can, for example, be found in~\cite{BJ08,LL19}. In particular, we refer to a recent paper of Chen et al.~\cite{CHLLP21} for a comprehensive overview.

On the positive side, Bonsma~\cite{Bo09} proved that {\sc Matching Cut} is polynomial-time solvable for $K_{1,3}$-free graphs and $P_4$-free graphs.  Recently, Feghali~\cite{Fe23} proved the same for $P_5$-free graphs, which we extended to $P_6$-free graphs in~\cite{LPR}. In the latter paper, we also showed that if {\sc Matching Cut} is polynomial-time solvable for $H$-free graphs, for some graph $H$, then it is so for $(H+P_3)$-free graphs (see Section~\ref{s-pre} for any unexplained notation and terminology).

On the negative side, {\sc Matching Cut} is \NP-complete even for $K_{1,4}$-free graphs. This follows from the construction of Chvátal~\cite{Ch84} (see also~\cite{Bo09,KL16}). Bonsma~\cite{Bo09} proved that {\sc Matching Cut} is \NP-complete for planar graphs of girth~$5$, and thus for $C_r$-free graphs with $r\in \{3,4\}$. Le and Randerath~\cite{LR03} proved that {\sc Matching Cut} is \NP-complete for $K_{1,5}$-free bipartite graphs. Hence, it is \NP-complete for $H$-free graphs if $H$ has an odd cycle. Via a trick of Moshi~\cite{Mo89}, \NP-completeness for $H$-free graphs also holds if $H$ has an even cycle (see~\cite{LPR}). Feghali~\cite{Fe23} proved the existence of an unspecified constant $r$ such that {\sc Matching Cut} is \NP-complete for $P_r$-free graphs; we will show that $r=27$ in his~construction.  

We now turn to \textsc{Disconnected Perfect Matching}. This problem was introduced by Bouquet and Picouleau~\cite{BP}, under a different name, but to avoid confusion with {\sc Perfect Matching Cut}, Le and Telle~\cite{LT21} introduced the notion of disconnected perfect matchings, which we adapted. As observed in~\cite{BP}, for cubic graphs, the problem is equivalent to finding a disconnected $2$-factor. Hence, it follows from a result of Diwan \cite{Di00} that every planar cubic bridgeless graph, except the $K_4$, has a disconnected perfect matching. Bouquet and Picouleau~\cite{BP} proved that {\sc Disconnected Perfect Matching} is, among others, polynomial-time solvable for claw-free graphs and $P_5$-free graphs, but \NP-complete for bipartite graphs (of diameter~$4$), for $K_{1,4}$-free planar graphs (each vertex of which has either degree~$3$ or~$4$) and for planar graphs with girth~$5$.

Finally, we discuss {\sc Perfect Matching Cut}. Heggernes and Telle~\cite{HT98} proved that this problem is \NP-complete.
 Le and Telle~\cite{LT21} proved that for every integer~$g\geq 3$, {\sc Perfect Matching Cut} is \NP-complete even for $K_{1,4}$-free bipartite graphs of girth $g$. The same authors showed that the problem is polynomial-time solvable for the classes of $S_{1,2,2}$-free graphs (which contain the classes of $K_{1,3}$-free graphs and $P_5$-free graphs) and for chordal graphs. As explained in~\cite{LT21}, the latter result generalizes a known result for interval graphs, for which a branch decomposition  of constant mim-width can be computed in polynomial~time.
  
\subsection{New Results}

For {\sc Matching Cut} on $H$-free graphs, the remaining cases are when $H$ is a $P_{27}$-free forest, each vertex of which has degree at most~$3$, such that $H$ is not an induced subgraph of $P_6+sP_3$ or $K_{1,3}+sP_3$ for some constant $s\geq 0$. By modifying the construction of Feghali~\cite{Fe23}, 
we prove in Section~\ref{s-main1} that {\sc Matching Cut} is \NP-complete for $(4P_5,P_{19})$-free graphs.  Using the aforementioned trick of Moshi~\cite{Mo89}, we also observe that {\sc Matching Cut} is \NP-complete for $H^*$-free graphs, where $H^*$ is the graph that looks like the letter $H$. 

For {\sc Disconnected Perfect Matching} on $H$-free graphs, the remaining cases are when $H$ contains an even cycle of length at least~$6$, such that every vertex of $H$ has degree at most~$3$ and $H$ is not an induced subgraph of $K_{1,3}$ or $P_5$. Bouquet and Picouleau~\cite{BP} asked about the complexity of the problem for $P_r$-free graphs, with $r\geq 6$. We partially answer their question by proving  \NP-completeness for $(4P_7,P_{23})$-free graphs in Section~\ref{s-main1} (via modifying our construction for {\sc Matching Cut} for $(4P_5,P_{19})$-free graphs).

For {\sc Perfect Matching Cut} on $H$-free graphs, the remaining cases are when $H$ is a forest of maximum degree~$3$, such that $H$ is not an induced subgraph of $S_{1,2,2}$. In Section~\ref{s-main3}, we first prove that {\sc Perfect Matching Cut} is polynomial-time solvable for graphs of radius at most~$2$, and we use this result to obtain a polynomial-time algorithm for $P_6$-free graphs. We also prove that if {\sc Perfect Matching Cut} is polynomial-time solvable for $H$-free graphs, for some graph $H$, then it is so for $(H+P_4)$-free graphs. All our results are obtained by combining a number of known propagation rules~\cite{LL19,LT21} with new rules that we will introduce. After applying these rules exhaustively, we obtain a graph, parts of which have been allocated to the sides $B$ and $R$ of the edge cut that we are looking for. We will prove that the connected components of the remaining subgraph will be placed completely in $B$ or $R$, and that this property suffices. By doing so, we extend a known approach with our new rules and show that in this way we widen its applicability.

The following three theorems present the state-of-art for $H$-free graphs; here  we write $G'\ssi G$ to indicate that $G'$ is an induced subgraph of $G$; as mentioned, recall that all undefined notation can be found in Section~\ref{s-pre}. 

\begin{theorem}\label{t-main1}
For a graph~$H$, {\sc Matching Cut} on $H$-free graphs is 
\begin{itemize}
\item polynomial-time solvable if $H\ssi sP_3+K_{1,3}$ or $sP_3+P_6$ for some $s\geq 0$, and
\item \NP-complete if $H\si C_r$ for some $r\geq 3$, $K_{1,4}$, $P_{19}$, $4P_5$ or $H^*$.
\end{itemize}
\end{theorem}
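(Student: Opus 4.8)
The plan is to treat Theorem~\ref{t-main1} as a bookkeeping result: every case is reduced to one of a few ``anchor'' graph classes whose complexity status is either already recorded in Section~\ref{s-intro} or established in Section~\ref{s-main1}. The one tool used throughout is the elementary observation that if $H_1\ssi H_2$ then every $H_1$-free graph is $H_2$-free, so the class of $H_1$-free graphs is contained in the class of $H_2$-free graphs. Since {\sc Matching Cut} is clearly in \NP\ (a matching cut is a polynomial-size certificate), this yields that \NP-completeness on $H_1$-free graphs transfers to $H_2$-free graphs whenever $H_1\ssi H_2$, while polynomial-time solvability on $H_2$-free graphs transfers to $H_1$-free graphs whenever $H_1\ssi H_2$.

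For the polynomial-time part I would first show, by induction on $s$, that {\sc Matching Cut} is polynomial-time solvable on $(sP_3+K_{1,3})$-free graphs and on $(sP_3+P_6)$-free graphs for every $s\geq 0$. The base cases $s=0$ are Bonsma's algorithm for $K_{1,3}$-free graphs~\cite{Bo09} and our algorithm for $P_6$-free graphs~\cite{LPR}. For the step, since $sP_3+K_{1,3}=((s-1)P_3+K_{1,3})+P_3$ and $sP_3+P_6=((s-1)P_3+P_6)+P_3$, it follows directly from the result of~\cite{LPR} that polynomial-time solvability on $H$-free graphs implies it on $(H+P_3)$-free graphs. Now if $H\ssi sP_3+K_{1,3}$ or $H\ssi sP_3+P_6$ for some $s\geq 0$, then the class of $H$-free graphs sits inside the corresponding anchor class and the algorithm applies.

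For the \NP-completeness part I would dispatch the listed conditions in turn. If $K_{1,4}\ssi H$, use that Chvátal's reduction is already \NP-complete for $K_{1,4}$-free graphs~\cite{Ch84} (see also~\cite{Bo09,KL16}). If $C_r\ssi H$ for some $r\geq 3$, then either $r$ is odd, in which case bipartite graphs are $H$-free and \NP-completeness follows from the construction of Le and Randerath for $K_{1,5}$-free bipartite graphs~\cite{LR03}, or $r$ is even, in which case it follows via Moshi's trick~\cite{Mo89} as carried out in~\cite{LPR}. If $P_{19}\ssi H$ or $4P_5\ssi H$, use that the construction of Section~\ref{s-main1} gives \NP-completeness for $(4P_5,P_{19})$-free graphs, which is a subclass both of $P_{19}$-free and of $4P_5$-free graphs. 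Finally, if $H^*\ssi H$, use the observation (again via Moshi's trick) that {\sc Matching Cut} is \NP-complete for $H^*$-free graphs. In each case the transfer of \NP-completeness stated above finishes the argument.

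The substantive difficulty lies not in this assembly, which only requires checking that all the class inclusions point in the right direction, but in the ingredient deferred to Section~\ref{s-main1}: modifying Feghali's long-path construction~\cite{Fe23}, whose analysis only guarantees $P_{27}$-freeness, so that the resulting instances are simultaneously $P_{19}$-free and $4P_5$-free while the reduction stays correct. That is the step where I would expect essentially all of the work to go, the remainder being routine.
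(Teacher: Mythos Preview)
Your proposal is correct and matches the paper's approach exactly: Theorem~\ref{t-main1} is a summary statement whose proof is precisely the bookkeeping you describe, with the polynomial cases coming from~\cite{Bo09,LPR} together with the $(H+P_3)$-closure result of~\cite{LPR}, and the \NP-complete cases coming from~\cite{Ch84,LR03,LPR,Mo89} together with Theorems~\ref{thm:h*} and~\ref{t-p19}. The paper does not give a separate formal proof of this theorem beyond the discussion in Section~\ref{s-intro}, and your assembly via the induced-subgraph monotonicity observation is exactly what is intended.
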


\begin{theorem}\label{t-main2}
For a graph~$H$, {\sc Disconnected Perfect Matching} on $H$-free graphs is 
\begin{itemize}
\item polynomial-time solvable if $H\ssi K_{1,3}$ or $P_5$, and
\item \NP-complete if $H\si C_r$ for some odd $r\geq 3$, $C_4$, $K_{1,4}$, $P_{23}$ or $4P_7$.
\end{itemize}
\end{theorem}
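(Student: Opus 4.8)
The plan is to obtain Theorem~\ref{t-main2} by assembling the polynomial-time and \NP-completeness results of Bouquet and Picouleau~\cite{BP} with the new \NP-completeness result for $(4P_7,P_{23})$-free graphs from Section~\ref{s-main1}. Membership in \NP is immediate: $G$ has a disconnected perfect matching if and only if there are a perfect matching $M$ of $G$ and a partition of $V(G)$ into non-empty sets $B$ and $R$ such that every edge with one end-vertex in $B$ and the other in $R$ belongs to $M$, and such a pair $(M,(B,R))$ can be guessed and verified in polynomial time. The argument then rests on two standard monotonicity observations: if $H\ssi H'$ then every $H$-free graph is $H'$-free, so any polynomial-time algorithm for $H'$-free graphs also solves the problem on $H$-free graphs; and, conversely, \NP-hardness on $H'$-free graphs carries over to $H$-free graphs whenever $H'\ssi H$.

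For the first item, $H\ssi K_{1,3}$ forces every $H$-free graph to be claw-free and $H\ssi P_5$ forces every $H$-free graph to be $P_5$-free, so the statement follows directly from the polynomial-time algorithms of~\cite{BP} for claw-free graphs and for $P_5$-free graphs. For the second item, the hardness-transfer observation reduces the task to establishing \NP-completeness for $C_r$-free graphs for every odd $r\geq 3$, for $C_4$-free graphs, for $K_{1,4}$-free graphs, for $P_{23}$-free graphs, and for $4P_7$-free graphs. Here I would use that bipartite graphs contain no odd cycle and are hence $C_r$-free for every odd $r\geq 3$, so that the \NP-completeness of {\sc Disconnected Perfect Matching} for bipartite graphs~\cite{BP} covers all odd cycles; that graphs of girth $5$ are $C_4$-free, so that the \NP-completeness for planar graphs of girth $5$~\cite{BP} covers $C_4$; that the \NP-completeness for $K_{1,4}$-free planar graphs~\cite{BP} covers $K_{1,4}$; and that every $(4P_7,P_{23})$-free graph is both $P_{23}$-free and $4P_7$-free, so that the \NP-completeness for $(4P_7,P_{23})$-free graphs established in Section~\ref{s-main1} covers both $P_{23}$ and $4P_7$.

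The only step requiring genuine work is the \NP-completeness for $(4P_7,P_{23})$-free graphs of Section~\ref{s-main1}; all the rest is bookkeeping over which forbidden induced subgraph is absorbed by which known hardness result. Two points of this bookkeeping deserve care. First, the $C_4$ case is \emph{not} subsumed by the bipartite case, since bipartite graphs may contain an induced $C_4$, so it genuinely relies on the girth-$5$ construction. Second, the $P_{23}$ and $4P_7$ cases are both delivered by the single construction of Section~\ref{s-main1}, whose output avoids an induced $P_{23}$ and an induced $4P_7$ simultaneously. Combining these base cases with the two monotonicity observations yields both statements of the theorem.
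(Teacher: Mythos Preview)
Your proposal is correct and follows precisely the approach intended by the paper: Theorem~\ref{t-main2} is stated as a state-of-the-art summary, and its justification is exactly the assembly of the Bouquet--Picouleau results from~\cite{BP} with the new hardness result of Theorem~\ref{t-p23}, glued together by the two monotonicity observations you spell out. Your handling of each base case matches the paper's attribution of results, including the subtlety that $C_4$ requires the girth-$5$ construction rather than the bipartite one.
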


\begin{theorem}\label{t-main3}
For a graph~$H$, {\sc Perfect Matching Cut} on $H$-free graphs is 
\begin{itemize}
\item polynomial-time solvable if $H\ssi sP_4+S_{1,2,2}$ or $sP_4+P_6$, for some $s\geq 0$, and
\item \NP-complete if $H\si C_r$ for some $r\geq 3$ or $K_{1,4}$.
\end{itemize}
\end{theorem}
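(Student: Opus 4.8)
The plan is to establish the two bullet points separately, the \NP-completeness part being a quick consequence of known work and the polynomial-time part the substantial one. For the hardness I would use the single result of Le and Telle~\cite{LT21} that, for every $g\geq 3$, {\sc Perfect Matching Cut} is \NP-complete on graphs that are simultaneously $K_{1,4}$-free, bipartite and of girth~$g$. If $H\si K_{1,4}$, then every $K_{1,4}$-free graph is $H$-free, so already $g=3$ gives \NP-completeness on a subclass of the $H$-free graphs. If $H\si C_r$ for some $r\geq 3$, pick a suitable $g>r$: a graph of girth at least $g$ contains no cycle on at most $r$ vertices, in particular no induced $C_r$, and since $C_r$ is an induced subgraph of $H$ such a graph is $H$-free; so this class witnesses \NP-completeness. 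The only point to check is that Le and Telle's hard instances can be taken $K_{1,4}$-free, bipartite and of arbitrarily large girth simultaneously, which they can.

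For the polynomial-time part, note first that $H\ssi G_0$ implies every $H$-free graph is $G_0$-free, so it suffices to prove that {\sc Perfect Matching Cut} is polynomial-time solvable on $(sP_4+S_{1,2,2})$-free graphs and on $(sP_4+P_6)$-free graphs for every $s\geq 0$; I would do this by induction on~$s$. The case $s=0$ splits into the algorithm of Le and Telle~\cite{LT21} for $S_{1,2,2}$-free graphs and a new algorithm for $P_6$-free graphs. For the induction step, writing $sP_4+S_{1,2,2}=\bigl((s-1)P_4+S_{1,2,2}\bigr)+P_4$ and similarly with $P_6$ in place of $S_{1,2,2}$, I would invoke a new transfer lemma: if {\sc Perfect Matching Cut} is polynomial-time solvable on $H$-free graphs, then it is polynomial-time solvable on $(H+P_4)$-free graphs, for any graph~$H$.

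The new base case of $P_6$-free graphs is obtained from a polynomial-time algorithm for graphs of radius at most~$2$, which is the core of the argument. I would use the by-now standard combination of guessing a bounded interface and then propagating: guess a vertex~$v$ of eccentricity at most~$2$ together with the constantly many ways a sought partition $(B,R)$ can meet $N[v]$ — in particular, which neighbour of~$v$ lies on the opposite side — and then apply exhaustively the propagation rules of~\cite{LL19,LT21}, essentially ``once a vertex lies in one part and already has a neighbour in the other, all its remaining neighbours are forced into its own part'', augmented with new rules for the \emph{perfect} requirement that every vertex has exactly one neighbour across the cut, so that the two parts have equal size and a forced vertex with no available partner across the cut is an immediate contradiction. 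The heart of the proof is to show that, on a radius-$2$ graph, after the rules are applied exhaustively, every connected component of the still-uncoloured subgraph must be placed entirely in $B$ or entirely in $R$ in any perfect matching cut extending the current partial colouring, and that deciding whether some such monochromatic assignment of the components completes to a perfect matching cut can be done in polynomial time (a $2$-SAT-style or greedy condition on the components). To deduce the $P_6$-free case one needs that a connected $P_6$-free graph either has radius at most~$2$ or carries enough structure — via a domination-type result for $P_6$-free graphs, bearing in mind that $C_6$ shows $P_6$-freeness alone does not force radius~$2$ — that one can either reduce to a strictly smaller equivalent instance or decide the answer outright. I expect this deduction, together with the ``monochromatic components suffice'' claim, to be the main obstacle.

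The remaining ingredient, the $+P_4$ transfer lemma, follows the template already used for {\sc Matching Cut} with $+P_3$ in~\cite{LPR}: in a connected $(H+P_4)$-free graph $G$ that is not already $H$-free, fix an induced $P_4$; any vertex non-adjacent to this $P_4$ that lies on an induced copy of~$H$ would create an induced $H+P_4$, so the subgraph of~$G$ induced by the vertices non-adjacent to the $P_4$ is $H$-free; branching over how $(B,R)$ meets the $P_4$, propagating with the rules above, and solving the resulting $H$-free subinstances with the assumed algorithm then completes the proof. Here the difficulty is only bookkeeping: checking that the old and new propagation rules interact correctly with the guessed interface so that the ``monochromatic components'' property is preserved.
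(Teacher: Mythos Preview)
Your overall architecture matches the paper---the \NP-hardness via Le--Telle, the induction on~$s$, the $S_{1,2,2}$-free and $P_6$-free base cases, and the radius-$2$ engine with propagation followed by a monochromatic-components/$2$-SAT finish---but your $+P_4$ transfer lemma has the roles of $H$ and $P_4$ swapped, and this is a genuine gap. The template in~\cite{LPR}, and the one the paper uses here in Theorem~\ref{t-h}, fixes an induced copy of $H$, not of $P_4$: if $G$ is $(H+P_4)$-free and contains an induced $H$ on a vertex set $V'$, then $G-(V'\cup N(V'))$ is $P_4$-free. One then branches over the $2^{|V(H)|}$ colourings of $V'$ and, for each vertex of $V'$, the $O(n)$ choices of a matched partner---polynomially many branches because $|V(H)|$ is a constant---propagates, and uses the structural fact that every connected $P_4$-free graph has a spanning complete bipartite subgraph to force the monochromatic-components property on the residual. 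The assumed $H$-free algorithm is invoked only in the trivial branch where $G$ is already $H$-free.

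Your reversed version---fix an induced $P_4$, so the non-neighbourhood is $H$-free, and ``solve the resulting $H$-free subinstances with the assumed algorithm''---does not go through. After branching on the $P_4$ and propagating, what remains is an \emph{extension} problem: complete a given partial red-blue colouring of $G$ to a perfect one, where the uncoloured part $Z$ happens to sit inside an $H$-free induced subgraph. The assumed black box only decides {\sc Perfect Matching Cut} on an $H$-free input; it does not solve this extension problem. The vertices already placed in $X\setminus S$ and $Y\setminus T$ still need their unique matched partners inside $Z$, and these cross-constraints between $X\cup Y$ and $Z$ cannot be encoded by passing to $G[Z]$ alone. So the induction step fails as stated; you need to swap $H$ and $P_4$ and replace the black-box call by the structural $P_4$-free argument. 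As a smaller remark, for the $P_6$-free deduction the paper does not reduce to a strictly smaller instance: it applies Theorem~\ref{t-hp} (a connected $P_6$-free graph has a dominating induced $C_6$ or a dominating complete bipartite subgraph) and dispatches the resulting cases directly via Lemma~\ref{l-dom} (bounded domination number), Lemma~\ref{l-dom2} (dominating set forced monochromatic), and Theorem~\ref{t-radius} (when the dominating complete bipartite graph is a star, so the radius is at most~$2$).
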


\noindent
We state a number of open problems that originate from our systematic study in Section~\ref{s-con}.

\section{Preliminaries}\label{s-pre}

We only consider finite undirected graphs without multiple edges and self-loops. 
Throughout this section, we let $G=(V,E)$ be a connected graph. Let $u\in V$. The set $N(u)=\{v \in V\; |\; uv\in E\}$ is the {\it neighbourhood} of $u$ in $G$, where $|N(u)|$ is the {\it degree} of $u$. Let $S\subseteq V$. The {\it neighbourhood} of $S$ is the set $N(S)=\bigcup_{u\in S}N(u)\setminus S$. 
The graph $G[S]$ is the subgraph of $G$ {\it induced} by $S\subseteq V$, that is, $G[S]$ is the graph obtained from $G$ after deleting the vertices not in $S$. We write $G'\ssi G$, if $G'$ is an induced subgraph of $G$. We say that $S$ is a {\it dominating} set of $G$, and that $G[S]$ {\it dominates} $G$, if every vertex of $V\setminus S$ has at least one neighbour in~$S$. The {\it domination number} of $G$ is the size of a smallest dominating set of $G$.

Let $u,v\in V$.
The {\it distance} between $u$ and $v$ in~$G$ is the {\it length} (number of edges) of a shortest path between $u$ and $v$ in $G$. The {\it eccentricity} of $u$ is the maximum distance between $u$ and any other vertex of $G$. 
The {\it radius} of $G$ is the minimum eccentricity over all vertices of~$G$. 
If $G$ is not a tree, then the {\it girth} of $G$ is the length of a shortest cycle in $G$.

Let $H$ be a graph. Recall that $G$ is $H$-free if $G$ does not contain $H$ as an induced subgraph. Let $\{H_1,\ldots,H_n\}$ be a set of graphs. Then $G$ is {\it $(H_1,\ldots,H_n)$-free}, if $G$ is $H_i$-free for every $i\in \{1,\ldots,n\}$.
The graph $P_r$ is the path on $r$ vertices. The graph $C_r$ is the cycle on $r$ vertices.
A bipartite graph with non-empty partition classes $V_1$ and $V_2$ is {\it complete} if there is an edge between 
every vertex of $V_1$ and every vertex of $V_2$. If $|V_1|=k$ and $|V_2|=\ell$, we write $K_{k,\ell}$.
The graph $K_{1,\ell}$ is the {\it star} on $\ell+1$ vertices. The graph $K_{1,3}$ is also known as the {\it claw}.
For $1\leq h\leq i\leq j$, the graph $S_{h,i,j}$ is the tree with one vertex of degree~$3$,
whose (three) leaves are at distance~$h$,~$i$ and~$j$ from the vertex of degree~$3$. Observe that $S_{1,1,1}=K_{1,3}$.
We need the following known result (which has been strengthened in~\cite{CS16}).

\begin{theorem}[\cite{HP10}]\label{t-hp}
A graph $G$ is $P_6$-free if and only if each connected induced subgraph of $G$ 
contains a dominating induced $C_6$ or 
a dominating (not necessarily induced) complete bipartite graph.
Moreover, such a dominating subgraph of $G$ can be found in polynomial time.
\end{theorem}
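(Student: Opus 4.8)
First, the \emph{if}-direction is immediate by contraposition. If $G$ contains an induced $P_6$, then this copy of $P_6$ is itself a connected induced subgraph of $G$, and it contains neither a dominating induced $C_6$ (it is acyclic) nor a dominating complete bipartite subgraph: since $P_6$ has maximum degree~$2$ and contains no $C_4$, its only complete bipartite subgraphs are single edges and (induced) paths on three vertices, and each of these dominates at most five of the six vertices of $P_6$. Hence, if every connected induced subgraph of $G$ contains one of the two required dominating subgraphs, then $G$ is $P_6$-free.

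For the \emph{only if}-direction, it suffices to prove that every connected $P_6$-free graph $G$ contains a dominating induced $C_6$ or a dominating complete bipartite subgraph, since every connected induced subgraph of a $P_6$-free graph is again connected and $P_6$-free, so applying this to each such subgraph gives the statement. As $G$ has no induced $P_6$, a longest induced path in $G$ has at most five vertices, and I would split into cases accordingly. If $G$ is $P_5$-free, the classical theorem of Bacs\'o and Tuza yields a dominating clique or a dominating induced $P_3$ in $G$; splitting a dominating clique on at least two vertices into two non-empty parts gives a dominating complete bipartite subgraph, a dominating $P_3$ is itself a dominating $K_{1,2}$, and the cases $|V(G)|\le 2$ are trivial. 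If instead $G$ contains an induced $P_5$ on vertices $v_1v_2v_3v_4v_5$, I would classify every vertex of $G$ outside this path by its set of neighbours among $v_1,\dots,v_5$; here $P_6$-freeness forbids many of the possible types (for instance, a vertex adjacent to $v_1$ but to none of $v_2,\dots,v_5$ would extend the path to an induced $P_6$) and forces strong adjacency relations among the admissible types. From this structure I expect that, depending on how vertices attach near the two ends, either the path together with one suitable vertex closes into an induced $C_6$, or the vertices seen near the middle vertex $v_3$ together with a common neighbourhood form a complete bipartite subgraph. In the first case one then argues, via a shortest-path argument towards a hypothetical undominated vertex (which would create an induced $P_6$), that some induced $C_6$ of $G$ dominates $G$; in the second case one verifies directly that the complete bipartite subgraph dominates $G$.

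For the polynomial-time claim, each candidate ``seed'' can be found efficiently: there are $O(n^5)$ induced copies of $P_5$ and $O(n^6)$ of $C_6$, the dominating clique or $P_3$ of Bacs\'o and Tuza is computable in polynomial time, and a candidate complete bipartite subgraph can be grown greedily; for each candidate, testing whether it dominates $G$ takes polynomial time, and the structural analysis guarantees that one candidate works. The main obstacle will be the case where $G$ has an induced $P_5$: enumerating exactly which neighbourhood types are admissible under $P_6$-freeness, proving the resulting dichotomy (dominating induced $C_6$ versus dominating complete bipartite subgraph) is exhaustive, and --- in the $C_6$ branch --- verifying domination, since a carelessly chosen induced $C_6$ need not dominate and one may need to rotate it using an offending vertex. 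A minor recurring technicality is the convention that both sides of a complete bipartite subgraph are non-empty, which makes $K_1$ a boundary case; this is harmless, as it only arises when $G$ is a single vertex.
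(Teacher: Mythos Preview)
The paper does not prove this theorem at all: it is quoted from~\cite{HP10} as a known characterization and used as a black box in the proof of Theorem~\ref{t-p6}. There is therefore no ``paper's own proof'' to compare your attempt against.

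As for your sketch itself: the \emph{if}-direction is clean and correct. For the \emph{only if}-direction, your overall plan (reduce to connected $P_6$-free $G$; handle the $P_5$-free subcase via Bacs\'o--Tuza; otherwise fix an induced $P_5$ and classify attachments) is the natural one and is in the spirit of the original proof in~\cite{HP10}. However, what you have written is a plan, not a proof. The decisive case---$G$ contains an induced $P_5$---is left at the level of ``I expect that \ldots'' and ``one then argues \ldots''. In particular, you have not actually established the dichotomy (which attachment patterns force a dominating induced $C_6$, which force a dominating complete bipartite subgraph), nor have you shown that in the $C_6$ branch some specific induced $C_6$ dominates; you yourself flag that a carelessly chosen $C_6$ need not dominate. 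These are precisely the parts where the work lies, and they are absent. If you intend to supply a self-contained proof rather than cite~\cite{HP10}, you must carry out that case analysis in full.
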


\noindent
Let $G_1$ and $G_2$ be two vertex disjoint graphs. The graph  $G_1+G_2=(V(G_1)\cup V(G_2),E(G_1)\cup E(G_2))$ is
the {\it disjoint union} of $G_1$ and $G_2$. For a graph $G$, the graph $sG$ is the disjoint union of $s$ copies of $G$.
Let $H^*$ be the ``H''-graph, which is the graph on six vertices obtained from the $2P_3$ by adding an edge joining the middle vertices of the two $P_3$s. 

A {\it red-blue colouring} of $G$ colours every vertex of $G$ either red or blue. If every vertex of a set $S\subseteq V$ has the same colour (red or blue), then $S$ (and also $G[S]$) are called {\it monochromatic}.
A red-blue colouring is {\it valid}, if every blue vertex has at most one red neighbour; every red vertex has at most one blue neighbour; and both colours red and blue are used at least once.  If a red vertex $u$ has a blue vertex neighbour $v$, then $u$ and $v$ are \textit{matched}.
See also Figure~\ref{f-examples}.

For a valid red-blue colouring of $G$, we let $R$ be the {\it red} set consisting of all vertices coloured red and $B$ be the {\it blue} set consisting of all vertices coloured blue (so $V=R\cup B$). Moreover, the {\it red interface} is the set $R'\subseteq R$ consisting of all vertices in $R$ with a (unique) blue neighbour, and the {\it blue interface} is the set $B'\subseteq B$ consisting of all vertices in $B$ with a (unique) red neighbour in $R$. 
A red-blue colouring of~$G$ is {\it perfect}, if it is valid and moreover $R'=R$ and $B'=B$. A red-blue colouring of a graph~$G$ is {\it perfect-extendable}, if it is valid and $G[R\setminus R']$ and $G[B \setminus B']$ both contain a perfect matching. In other words, the matching given by the valid red-blue colouring can be extended to a perfect matching in $G$ or, equivalently, is contained in a perfect matching in $G$.
We can now make the following known observation.

\begin{observation}\label{o} Let $G$ be a connected graph. The following three statements hold:
\begin{itemize}
\item [(i)] $G$ has a matching cut if and only if $G$ has a valid red-blue colouring;
\item [(ii)] $G$ has a disconnected perfect matching if and only if $G$ has a perfect-extendable red-blue colouring;
\item [(iii)] $G$ has a perfect matching cut if and only if $G$ has a perfect red-blue colouring.
\end{itemize}
\end{observation}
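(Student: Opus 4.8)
The plan is to prove all three equivalences directly from the definitions, using a single translation between the edge-set description of a cut and the vertex-colouring description. The dictionary is: from an edge cut $M$ with sides $B$ and $R$, colour the vertices of $B$ blue and those of $R$ red; conversely, from a red-blue colouring that uses both colours, let $M$ be the set of edges with one blue and one red end. I would first record this correspondence and observe that under it the statement ``$M$ is a matching'' is literally the same as ``every vertex has at most one neighbour of the opposite colour'', i.e. the defining property of a valid colouring, while the requirement that both colours are used corresponds to $M$ being a cut with both sides non-empty. This settles (i) in both directions with essentially nothing more to check.

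For (ii) I would build on (i). Given a disconnected perfect matching $M$, it contains a matching cut $M'$; apply (i) to $M'$ to obtain a valid colouring, and note that the red interface $R'$ is exactly the set of red vertices saturated by $M'$, and similarly for $B'$. The point to verify is that every edge of $M\setminus M'$ incident to a vertex of $R\setminus R'$ has both ends in $R\setminus R'$: such a vertex has \emph{no} blue neighbour at all (by definition of $R'$), so its $M$-edge is monochromatic red, and its other end cannot lie in $R'$ because each vertex of $R'$ is already saturated by its $M'$-edge. Hence $M\setminus M'$ restricts to perfect matchings of $G[R\setminus R']$ and $G[B\setminus B']$, so the colouring is perfect-extendable. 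Conversely, from a perfect-extendable colouring take $M'$ to be the set of red-blue edges (a matching cut by (i)) together with the guaranteed perfect matchings of $G[R\setminus R']$ and $G[B\setminus B']$; since the vertex sets $R'\cup B'$, $R\setminus R'$ and $B\setminus B'$ partition $V$ and the three edge sets are supported on these three pieces respectively, their union is a perfect matching of $G$ that contains the matching cut $M'$.

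For (iii) I would again start from (i): a perfect matching cut is a matching cut $M$ that is also a perfect matching, and under the correspondence ``$M$ is a perfect matching'' says precisely that every red vertex and every blue vertex is saturated by a red-blue edge, i.e. $R'=R$ and $B'=B$, which is the definition of a perfect colouring; both implications are then immediate.

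There is no deep obstacle here, since the observation is essentially a reformulation; the only care needed is the bookkeeping in (ii). The main thing to get right is the argument that no edge of the extending perfect matching crosses between an interface vertex and a non-interface vertex, together with the fact that the three edge sets glue into a single perfect matching without overlap. I would therefore make a point of stating explicitly that interface vertices are saturated by the cut edges while non-interface vertices have no neighbour of the opposite colour, as this is exactly what makes the decomposition clean.
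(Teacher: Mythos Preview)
Your proof is correct; the paper itself does not prove Observation~\ref{o} at all but simply states it as ``known'', so there is nothing to compare against. Your dictionary between edge cuts and red-blue colourings is exactly the intended translation, and the bookkeeping you flag for (ii)---that interface vertices are saturated by cut edges while non-interface vertices have no opposite-coloured neighbour, forcing the extending matching to live entirely inside $G[R\setminus R']$ and $G[B\setminus B']$---is precisely the content of the equivalence.
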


\section{Our NP-Completeness Results}\label{s-main1}

\noindent
We prove three \NP-completeness results in this section. Our first result is a straightforward observation.
Let $uv$ be an edge in a graph $G$. Replacing $uv$ by new vertices $w_1$ and $w_2$ and edges $uw_1$, $uw_2$, $vw_1$, $vw_2$ is a {\it $K_{2,2}$-replacement}. Let $G_{uv}$ be the new graph; see also Figure~\ref{f-moshi}. Moshi~\cite{Mo89} showed that~$G$ has a matching cut if and only if $G_{uv}$ has a matching cut. Applying a $K_{2,2}$-replacement on every edge to ensure that no two degree-$3$ vertices are adjacent anymore leads to the~following:

\begin{figure}
\begin{center}
\scalebox{1.5}{\begin{tikzpicture}
\tikzstyle{bvertex}=[thin,circle,inner sep=0.cm, minimum size=1.7mm, fill=black, draw=black]
\tikzstyle{edge} = [thin, gray]

\def\k{0.7}
\node[bvertex, label= below:$u$](u) at (0,0){};
\node[bvertex, label= below:$v$](v) at (\k,0){};

\node[bvertex](u1) at (150:\k){};
\node[bvertex](u2) at (180:\k){};
\node[bvertex](u3) at (210:\k){};

\begin{scope}[shift= {(\k,0)}]
\node[bvertex](v1) at (45:\k){};
\node[bvertex](v2) at (15:\k){};
\node[bvertex](v3) at (-15:\k){};
\node[bvertex](v4) at (-45:\k){};
\end{scope}

\draw[thick, nicered](u)--(v);

\foreach \i in {1,...,3}{
	\draw[edge] (u) -- (u\i);
	}
	\foreach \i in {1,...,4}{
	\draw[edge] (v) -- (v\i);
	}

\begin{scope}[shift = {(5*\k,0)}]
\node[bvertex, label= below:$u$](u) at (0,0){};
\node[bvertex, label= below:$v$](v) at (1.41*\k,0){};
\node[bvertex, label= above:$w_1$](w1) at (0.705*\k,0.65*\k){};
\node[bvertex, label= below:$w_2$](w2) at (0.705*\k,-0.65*\k){};

\node[bvertex](u1) at (150:\k){};
\node[bvertex](u2) at (180:\k){};
\node[bvertex](u3) at (210:\k){};

\begin{scope}[shift= {(1.41*\k,0)}]
\node[bvertex](v1) at (45:\k){};
\node[bvertex](v2) at (15:\k){};
\node[bvertex](v3) at (-15:\k){};
\node[bvertex](v4) at (-45:\k){};
\end{scope}

\draw[thick, nicered](u)--(w1);
\draw[thick, nicered](u)--(w2);
\draw[thick, nicered](w1)--(v);
\draw[thick, nicered](w2)--(v);

\foreach \i in {1,...,3}{
	\draw[edge] (u) -- (u\i);
	}
	\foreach \i in {1,...,4}{
	\draw[edge] (v) -- (v\i);
	}
\end{scope}

\end{tikzpicture}}
\caption{The $K_{2,2}$-replacement applied on edge $uv$.}\label{f-moshi} 
\end{center}
\end{figure}
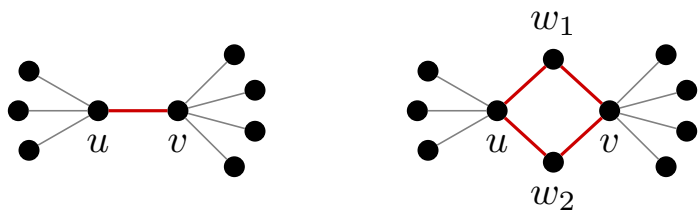

\begin{theorem}\label{thm:h*}
{\sc Matching Cut} is \NP-complete for $H^*$-free graphs.
\end{theorem}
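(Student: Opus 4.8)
The plan is to give a polynomial-time reduction from {\sc Matching Cut} on general graphs, which is \NP-complete by Chvátal~\cite{Ch84}, using the $K_{2,2}$-replacement operation defined just above the statement. Given a connected graph $G$, I would list its edges as $e_1,\dots,e_m$ and successively perform a $K_{2,2}$-replacement on each $e_i$; when we reach $e_i$ it is still an edge, since the earlier replacements only delete $e_1,\dots,e_{i-1}$ and add brand-new edges. Let $G'$ be the resulting graph. It is connected and can be built in polynomial time, and applying Moshi's result~\cite{Mo89} once for each of the $m$ replacements shows that $G$ has a matching cut if and only if $G'$ does. Since {\sc Matching Cut} is trivially in \NP, this reduction establishes the theorem once we check that $G'$ is $H^*$-free.

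To see that $G'$ is $H^*$-free I would argue via degrees. Every edge of $G'$ joins a vertex of $V(G)$ to one of the newly created vertices, and each newly created vertex has degree exactly $2$, being adjacent only to the two endpoints of the edge it replaced. Hence no two vertices of degree at least~$3$ are adjacent in $G'$. On the other hand $H^*$, being $2P_3$ plus an edge between the two middle vertices, contains an edge both of whose endpoints have degree~$3$ in $H^*$. If $G'$ had an induced subgraph isomorphic to $H^*$, the two vertices of $G'$ corresponding to those middle vertices would be adjacent and would each have degree at least~$3$ in $G'$ (the degree of a vertex in an induced subgraph never exceeds its degree in the whole graph), contradicting the previous sentence. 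So $G'$ contains no induced $H^*$.

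The only point requiring a little care is the degree bookkeeping: one must be sure that replacing \emph{all} edges of $G$ — rather than, say, only those between two vertices of degree at least~$3$, as one might prefer for a smaller instance — is what makes the clean statement ``every edge of $G'$ meets a new degree-$2$ vertex'' true, and both halves of this (new vertices have degree~$2$; an original vertex now reaches each of its former neighbours through a new vertex) are immediate from the definition of the $K_{2,2}$-replacement. I therefore do not expect a genuine obstacle; the reduction is short and the work is entirely in observing the degree structure of $G'$.
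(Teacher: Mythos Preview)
Your proposal is correct and follows essentially the same approach as the paper: apply a $K_{2,2}$-replacement to every edge so that in the resulting graph every edge is incident with a new degree-$2$ vertex, hence no two vertices of degree at least~$3$ are adjacent and the graph is $H^*$-free. The paper only states this as a one-line observation preceding the theorem, and your write-up supplies exactly the details (degree bookkeeping, Moshi's equivalence applied iteratively, membership in \NP) that flesh it out.
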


\noindent
For proving our next two \NP-completeness results, we reduce from 
The {\sc Exact Positive 1-in-3 SAT}. This problem
 takes as input a pair $(X,C)$, where $X$ is a set 
of variables and $C$ is a set of clauses, each containing 
exactly three literals, all three of which are positive. Moreover, each variable of $X$ appears in exactly three clauses of $C$. The question is whether there exists a truth assignment, such that each clause contains exactly one true literal. 

\begin{theorem}[\cite{Sc10}]\label{t-sc10}
{\sc Exact Positive 1-in-3 SAT} is \NP-complete.
\end{theorem}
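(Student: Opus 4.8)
The statement is a known result; the plan is to derive it by a polynomial reduction from {\sc Positive 1-in-3 SAT}, the version in which clauses still consist of three positive literals but a variable may occur in any number of clauses. Membership in \NP\ is trivial, since a truth assignment is a polynomial-size certificate checkable clause by clause, so everything comes down to \NP-hardness. First I would recall that {\sc Positive 1-in-3 SAT} is itself \NP-complete: the ternary relation ``exactly one of the three arguments is true'' is neither $0$-valid, $1$-valid, Horn, dual-Horn, affine, nor bijunctive, so \NP-completeness is immediate from Schaefer's dichotomy theorem (alternatively one can give a direct reduction from {\sc 3-SAT}, using one auxiliary variable per clause in the standard way and then eliminating negations by tying each variable to a fresh complement variable). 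Since the source problem is already monotone and has uniform clause width, the only remaining task is to turn the instance into a \emph{regular} one, in which every variable lies in exactly three clauses and every clause has three distinct variables.

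The regularization is by local replacement. The key gadget is that the two clauses ``exactly one of $u,a,b$'' and ``exactly one of $v,a,b$'', with $a,b$ fresh, force $u=v$ in every satisfying assignment while imposing no further restriction on $u$ or $v$; more generally, adding a third clause ``exactly one of $w,a,b$'' forces $u=v=w$ and, conveniently, leaves the two auxiliaries $a$ and $b$ in exactly three clauses each. Using such gadgets, a variable occurring too often is split into one private copy per occurrence, and these copies are welded into a single equivalence class by a connected arrangement of equality gadgets chosen so that every copy and every auxiliary ends up in exactly three clauses; a variable occurring too seldom is padded by attaching equality structures that absorb the missing occurrences; and a clause containing a repeated variable is first de-duplicated using a fresh equal copy. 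Soundness and completeness of the reduction are then immediate, since a gadget that enforces equality and nothing more clearly neither destroys nor creates satisfying assignments once projected back, and the construction is polynomial because every gadget has bounded size.

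I expect the only real obstacle to be the occurrence-count bookkeeping: one must fix a replacement gadget that is simultaneously \emph{transparent}, in that it propagates exactly the intended equalities, and \emph{self-contained}, in that it leaves all of its own auxiliary variables at degree exactly three, and then verify for under-occurring, over-occurring and correctly occurring variables alike, including the divisibility conditions that surface when counting how many gadgets must meet at each variable (resolved by adding a bounded amount of padding or by mixing gadget types), that every variable of the final instance sits in precisely three clauses. None of this is conceptually deep, but it is where essentially all the care goes.
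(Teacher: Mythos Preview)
The paper does not prove this theorem at all: it is quoted verbatim from Schmidt's thesis~\cite{Sc10} and used only as a black box in the subsequent hardness reductions. So there is no ``paper's own proof'' to compare against; you have supplied an independent argument where the authors were content to cite.

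Your outline is the standard route and is sound in spirit. Membership in \NP\ is clear, and \NP-hardness of monotone (unrestricted-degree) {\sc 1-in-3 SAT} does fall out of Schaefer's dichotomy as you say. The equality gadget you describe is correct: two clauses $(u,a,b)$ and $(v,a,b)$ with fresh $a,b$ force $u=v$ while leaving the value of $u$ free, and a third clause $(w,a,b)$ extends this to $u=v=w$ with the auxiliaries now at degree exactly three. Where you would still owe work is exactly where you flag it: assembling these gadgets so that \emph{every} variable, old copies and fresh auxiliaries alike, ends up with degree precisely three. A cycle of two-clause equality gadgets on the $k$ copies of an over-used variable gives each copy degree three but leaves each auxiliary pair at degree two, and patching that introduces further low-degree variables; getting the recursion to terminate requires a slightly more careful gadget (or an appeal to a concrete construction such as the one in~\cite{Sc10} or in Moore--Robson). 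You also need the side observation that in a valid instance one has $|X|=|C|$, since counting incidences gives $3|X|=3|C|$, which constrains how padding can be arranged. None of this invalidates your plan, but a fully written proof would need one explicit gadget family together with a short degree audit, rather than the promissory note you currently have.
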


\noindent
Theorem~\ref{t-p19} is our first new result. 
Its proof follows from Feghali's construction~\cite{Fe23} after making some minor modifications to it.
For completeness, and since we use the modified construction as a basis for the proof of Theorem \ref{t-p23}, we added a detailed proof. Recall that Feghali~\cite{Fe23} showed that {\sc Matching Cut} is \NP-complete for $P_r$-free graphs, for some unspecified constant~$r$. We will show that
in~\cite{Fe23} $r=27$; see Remark~\ref{remark-p27} below.

\begin{theorem}\label{t-p19}
{\sc Matching Cut} is \NP-complete for $(4P_5,P_{19})$-free graphs.
\end{theorem}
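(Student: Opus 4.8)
The plan is to reduce from \textsc{Exact Positive 1-in-3 SAT} (Theorem~\ref{t-sc10}), following Feghali's construction~\cite{Fe23} with modifications that simultaneously (a) bring the path-length bound down from $27$ to $19$ and (b) kill all induced $4P_5$'s. Given an instance $(X,C)$, I would build a graph $G$ whose matching cuts correspond to satisfying $1$-in-$3$ assignments. The skeleton is the usual one: for each variable $x$ we attach a small \emph{variable gadget} whose two ``sides'' in any valid red-blue colouring (Observation~\ref{o}(i)) encode the truth value of $x$, and for each clause $c=\{x_i,x_j,x_k\}$ we attach a \emph{clause gadget} forcing exactly one of the three incident variable gadgets to be coloured ``true''. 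To make the colouring essentially rigid, I would hang everything off one or two high-degree ``anchor'' vertices (or a small monochromatic core) that are forced into one colour class, using the fact that a vertex of degree $\geq 3$ all of whose neighbours lie in the same part has all its incident edges inside that part; propagating from the anchors pins down almost every vertex, leaving only the $O(1)$ ``free'' vertices per gadget that carry the truth value.

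The key steps, in order, are: (1) describe the variable and clause gadgets explicitly and prove the \emph{correspondence}: $G$ has a valid red-blue colouring iff $(X,C)$ has a $1$-in-$3$ satisfying assignment — this is the combinatorial heart and amounts to checking, gadget by gadget, which local colourings extend, using forced-colour propagation from the anchors; (2) verify that $G$ is constructible in polynomial time and that the reduction is correct, giving \NP-completeness for the (unrestricted) constructed class; (3) prove $G$ is $P_{19}$-free: since variable and clause gadgets are joined only through the anchor(s) and have bounded diameter, any induced path must pass through the neighbourhood of an anchor, and a counting/casework argument bounds its length — the number $19$ will drop out of the worst-case path that enters a variable gadget, crosses to the anchor region, goes through a clause gadget and out again; (4) prove $G$ is $4P_5$-free: four vertex-disjoint induced $P_5$'s with no edges between them would have to live in four ``far apart'' regions, but the anchor(s) dominate or nearly dominate $G$, so at most a bounded number (certainly fewer than four) of mutually non-adjacent $P_5$-sized pieces can coexist; I would formalise this by showing every induced $P_5$ in $G$ intersects a fixed bounded set (again the anchor neighbourhood), whence at most $|{\rm that\ set}|$ of them can be pairwise non-adjacent.

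I expect step (3) — the exact $P_{19}$-freeness bound — to be the main obstacle, together with the delicate design trade-off behind it. The construction has to be rich enough that the gadget correspondence in step (1) still goes through (the propagation must remain rigid, and the $1$-in-$3$ semantics must be exactly captured, not $\le 1$ or $\ge 1$), yet lean enough that no long induced path survives; shrinking Feghali's gadgets to squeeze $27$ down to $19$ is precisely where the two requirements pull against each other, so the bulk of the proof is a careful case analysis of longest induced paths through the modified gadgets. Step (4) is comparatively routine once the right ``hitting set'' for induced $P_5$'s is identified, and I would also record, as a remark, that running Feghali's original (unmodified) gadgets through the same path-length count gives $r=27$, justifying Remark~\ref{remark-p27}.
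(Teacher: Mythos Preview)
Your proposal is correct and follows essentially the same approach as the paper: a reduction from \textsc{Exact Positive 1-in-3 SAT} with variable and clause gadgets, the correspondence established via forced-colour propagation, the $P_{19}$-freeness bound obtained by counting how many vertices an induced path can pick up as it threads through the gadgets, and the $4P_5$-freeness via a hitting-set argument. For precision, the paper's ``anchor'' structure is not one or two high-degree vertices but three large cliques $S_1,S_2,S_3$ (with $S_1,S_2$ forced to \emph{different} colours---not a single monochromatic core---and $S_3$ spanning all clause vertices); the $4P_5$-freeness then falls out because every induced $P_5$ must meet $S_1\cup S_2\cup S_3$.
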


\begin{proof}
{\sc Matching Cut} is  in \NP, since it is possible to check in polynomial time if a given red-blue-colouring is valid or not. To prove \NP-hardness, we will use a reduction from {\sc Exact Positive 1-in-3 SAT}, which is \NP-complete by Theorem \ref{t-sc10}. Let $\mathcal{I}$ be an instance of {\sc Exact Positive 1-in-3 SAT} with variable set~$X$ and clause set $C$. We will build a graph $G_{\mathcal{I}}$ (see also Figures~\ref{fig-p17-const1} and \ref{fig-p17-const2}):

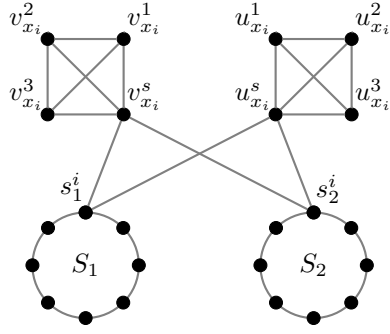
\begin{figure}[ht]
\centering
\begin{tikzpicture}
	\node[vertex, label={[label distance = -0.2cm]above right:\strut $v_{x_i}^1$}](v1) at ( 1,1){};
	\node[vertex, label={[label distance = -0.2cm]above left:\strut $v_{x_i}^2$}](v2) at ( 0,1){};
	\node[vertex, label={[label distance = -0.2cm]above left:\strut $v_{x_i}^3$}](v3) at ( 0,0){};
	\node[vertex, label={[label distance = -0.2cm]above right: \strut $v_{x_i}^s$}](v4) at ( 1,0){};
	\foreach \i in {1,...,4}{
			\foreach \j in {1,...,\i}{
			\draw[hedge](v\i)--(v\j);
			}
		}
		
		\begin{scope}[shift = {(3,0)}]
	\node[vertex, label={[label distance = -0.2cm]above right:\strut $u_{x_i}^2$}](u1) at ( 1,1){};
	\node[vertex, label={[label distance = -0.2cm]above left:\strut $u_{x_i}^1$}](u2) at ( 0,1){};
	\node[vertex, label={[label distance = -0.2cm]above left:\strut $u_{x_i}^s$}](u3) at ( 0,0){};
	\node[vertex, label={[label distance = -0.2cm]above right:\strut $u_{x_i}^3$}](u4) at ( 1,0){};
	\foreach \i in {1,...,4}{
			\foreach \j in {1,...,\i}{
			\draw[hedge](u\i)--(u\j);
			}
		}
		\end{scope}
		
		\draw[hedge](0.5,-2) circle (0.7);
		\draw[hedge](3.5,-2) circle (0.7);
		
		\node[vertex, label={[label distance = -0.2cm]170:\strut $s_1^i$}](s1) at ( 0.5,-1.3){};
		\node[vertex, label={[label distance = -0.2cm]30:\strut $s_2^i$}](s2) at ( 3.5,-1.3){};
		
		\node[](S1) at ( 0.5,-2){$S_1$};
		\node[](S2) at ( 3.5,-2){$S_2$};
		
		\begin{scope}[shift={(0.5,-2)}]
		\foreach \i in {1,...,8}{
			\node[vertex](s1\i) at ( 360/8*\i: 0.7){};
		}
		\end{scope}
			\begin{scope}[shift={(3.5,-2)}]
		\foreach \i in {1,...,8}{
			\node[vertex](s1\i) at ( 360/8*\i: 0.7){};
		}
		\end{scope}
		
		\draw[hedge](s1)--(v4);
		\draw[hedge](s1)--(u3);
		\draw[hedge](s2)--(v4);
		\draw[hedge](s2)--(u3);		

\end{tikzpicture}
\caption{\label{fig-p17-const1}The cliques $S_1$ and $S_2$ together with the variable gadget of $x_i$.}
\end{figure}

\begin{itemize}
\item for every $x_i \in X$, construct a variable gadget consisting of two disjoint cliques of size~$4$, $U_{x_i}$ and $V_{x_i}$, with vertex set $\set{u^s_{x_i}, u^1_{x_i},u^2_{x_i},u^3_{x_i}}$ and $\set{v^s_{x_i}, v^1_{x_i}, v^2_{x_i}, v^3_{x_i}}$, respectively;
\item add two cliques $S_1$ and $S_2$ with vertex set $\{s_1^1,\ldots,s_1^{|X|}\}$ and $\{s_2^1,\ldots,s_2^{|X|}\}$, respectively;
\item for every $i\in \{1,\ldots,|X|\}$, add the edges $s_1^iu^s_{x_i},s_1^iv^s_{x_i},s_2^iu^s_{x_i},s_2^iv^s_{x_i}$;
\item for every $c_j\in C$, construct a clause gadget on clause vertices $v_{c_j}, u^1_{c_j},u^2_{c_j}$, and auxiliary vertices $a^1_{c_j}, \ldots, a^{6}_{c_j}$. \item Add edges between all clause vertices of all clause gadgets to obtain a clique $S_3$.
\item for every $j\in \{1,\ldots,|C|\}$, add the edges $u^1_{c_j}a^\ell_{c_j}$, for $\ell=1,2,3$ and $u^2_{c_j}a^{\ell}_{c_j}$, for $\ell=4,5,6$;
\item for every $x_i\in X$ occurring in clauses $c_{j_1},c_{j_2}$ and $c_{j_3}$, add the edges $v^k_{x_i}v_{c_{j_k}}$, for $k=1,2,3$; and
\item for every $c_j\in C$ such that $c_j=x_{i_1}\vee x_{i_2}\vee x_{i_3}$, add the edges  $u^k_{x_{i_k}}a^k_{c_{j}}$ and $u^k_{x_{i_k}}a^{k+3}_{c_{j}}$, for $k=1,2,3$. 
\end{itemize}

\begin{figure}[ht]
\centering
\begin{tikzpicture}
\small
\node[vertex, label={[label distance = -0.1cm]above left:$v_{c_j}$}](v) at (0,0){};
\node[vertex](v1) at (90:1){};
\node[vertex](v2) at (180:1){};
\node[vertex](v3) at (270:1){};
\draw[hedge](v)--(v1);
\draw[hedge](v)--(v2);
\draw[hedge](v)--(v3);

\begin{scope}[shift={(90:1)},rotate = 45,scale=0.5]

\node[vertex](v11) at (1,0){};
\node[vertex](v12) at (1,1){};
\node[vertex](v13) at (0,1){};
\end{scope}

\draw[hedge](v1)--(v11);
\draw[hedge](v1)--(v12);
\draw[hedge](v1)--(v13);
\draw[hedge](v11)--(v12);
\draw[hedge](v11)--(v13);
\draw[hedge](v12)--(v13);

\begin{scope}[shift={(180:1)},rotate = 135, scale=0.5]

\node[vertex](v21) at (1,0){};
\node[vertex](v22) at (1,1){};
\node[vertex](v23) at (0,1){};
\end{scope}

\draw[hedge](v2)--(v21);
\draw[hedge](v2)--(v22);
\draw[hedge](v2)--(v23);
\draw[hedge](v21)--(v22);
\draw[hedge](v21)--(v23);
\draw[hedge](v22)--(v23);

\begin{scope}[shift={(270:1)}, rotate = 225,scale=0.5]

\node[vertex](v31) at (1,0){};
\node[vertex](v32) at (1,1){};
\node[vertex](v33) at (0,1){};
\end{scope}

\draw[hedge](v3)--(v31);
\draw[hedge](v3)--(v32);
\draw[hedge](v3)--(v33);
\draw[hedge](v31)--(v32);
\draw[hedge](v31)--(v33);
\draw[hedge](v32)--(v33);

\node[](a) at (-0.75,1.4) {$V_{x_{i_1}}$};
\node[](a) at (-2.15,0.1) {$V_{x_{i_2}}$};
\node[](a) at (-0.75,-1.3) {$V_{x_{i_3}}$};

\begin{scope}[shift={(2,0.75)}, yscale=0.7, xscale=0.5, rotate = 270]
\node[vertex, label=above:$u_{c_j}^1$](uc1) at (0,0){};
\node[vertex, label=below:$u_{c_j}^2$](uc2) at (2,0){};
\footnotesize

\draw[hedge](v)--(uc1);
\draw[hedge](v)--(uc2);

\node[vertex,label={[label distance=-0.0cm]above:$a_{c_j}^1$}](ac1) at (-1,2){};
\node[vertex,label={[label distance=-0.17cm]above left:$a_{c_j}^4$}](ac2) at (0,2){};
\node[vertex,label={[label distance=-0.05cm]right:$a_{c_j}^3$}](ac3) at (2,2){};
\node[vertex,label={[label distance=-0.0cm]below:$a_{c_j}^6$}](ac4) at (3,2){};
\node[vertex,label={[label distance=-0.1cm]above:$a_{c_j}^5$}](ac6) at (1.5,2){};

\node[vertex](ac5) at (0.5,2){};
\node[](labelac5) at (0.4,2.8){$a_{c_j}^2$};

\node[vertex](u1) at (-2,4){};
\node[vertex](u2) at (4,4){};
\node[vertex](u3) at (1,4){};

\draw[hedge](uc1)--(uc2);

\draw[hedge](uc1)--(ac1);
\draw[hedge](uc1)--(ac3);
\draw[hedge](uc1)--(ac5);
\draw[hedge](uc2)--(ac2);
\draw[hedge](uc2)--(ac4);
\draw[hedge](uc2)--(ac6);

\draw[hedge](ac1)--(u1);
\draw[hedge](ac2)--(u1);
\draw[hedge](ac3)--(u2);
\draw[hedge](ac4)--(u2);
\draw[hedge](ac5)--(u3);
\draw[hedge](ac6)--(u3);

\begin{scope}[shift={(-2,4)}, xscale = 5/7, rotate = 45]
\node[vertex](u11) at (1,0){};
\node[vertex](u12) at (1,1){};
\node[vertex](u13) at (0,1){};
\end{scope}

\draw[hedge](u1)--(u11);
\draw[hedge](u1)--(u12);
\draw[hedge](u1)--(u13);
\draw[hedge](u11)--(u12);
\draw[hedge](u11)--(u13);
\draw[hedge](u12)--(u13);

\begin{scope}[shift={(4,4)},xscale = 5/7,rotate = 45]
\node[vertex](u21) at (1,0){};
\node[vertex](u22) at (1,1){};
\node[vertex](u23) at (0,1){};
\end{scope}

\draw[hedge](u2)--(u21);
\draw[hedge](u2)--(u22);
\draw[hedge](u2)--(u23);
\draw[hedge](u21)--(u22);
\draw[hedge](u21)--(u23);
\draw[hedge](u22)--(u23);

\begin{scope}[shift={(1,4)}, xscale = 5/7,rotate = 45]
\node[vertex](u31) at (1,0){};
\node[vertex](u32) at (1,1){};
\node[vertex](u33) at (0,1){};
\end{scope}

\draw[hedge](u3)--(u31);
\draw[hedge](u3)--(u32);
\draw[hedge](u3)--(u33);
\draw[hedge](u31)--(u32);
\draw[hedge](u31)--(u33);
\draw[hedge](u32)--(u33);

\node[](a) at (-2,6.5) {$U_{x_{i_1}}$};
\node[](a) at (1,6.5) {$U_{x_{i_2}}$};
\node[](a) at (4,6.5) {$U_{x_{i_3}}$};

\end{scope}
\end{tikzpicture}
\caption{\label{fig-p17-const2}The clause gadget for clause $c_j=x_{i_1}\vee x_{i_2}\vee x_{i_3}$.}
\end{figure}
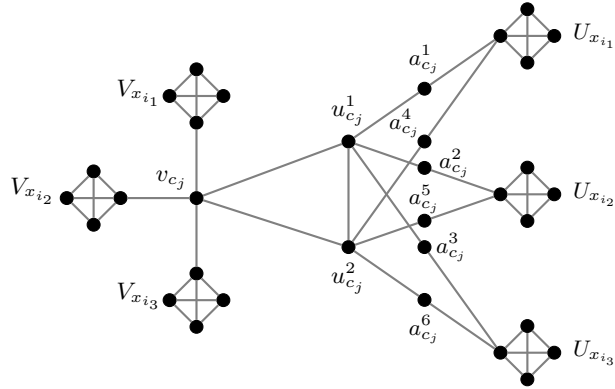

We claim that $\mathcal{I}$ admits a truth assignment such that each clause contains exactly one true literal if and only if $G_{\mathcal{I}}$ admits a valid red-blue-colouring. 

\medskip
\noindent
First suppose that $G_{\mathcal{I}}$ admits a valid red-blue-colouring.
We start with some useful claims. 

\begin{claim1}\label{c-p17-1}
For any variable $x_i\in X$, $i=1,\ldots,|X|$, both $V_{x_i}$ and $U_{x_i}$ are monochromatic. Furthermore, $S_1$, $S_2$ and $S_3$ are each monochromatic.
\end{claim1}
\begin{claimproof}
This immediately follows from the fact these sets are cliques of size at least~$3$.
\end{claimproof}

\noindent
We say that a monochromatic set has {\it colour} red or blue if all its vertices are coloured red or blue, respectively.

\begin{claim1}\label{c-p17-2}
It holds that
$S_1$ and $S_2$ have different colours.
\end{claim1}
\begin{claimproof}
Suppose for a contradiction that 
$S_1$ and $S_2$ have the same colour. We may assume, without loss of generality, that $S_1$ and $S_2$ are both coloured blue. Since for every variable $x_i\in X$, $i=1,\ldots,|X|$, there exist vertices $u^s_{x_i}$ and $v^s_{x_i}$ having each a neighbour in both $S_1$ and $S_2$, it follows that every variable gadget is coloured blue. This implies in particular that every vertex $v_{c_j}$, for $j=1,\ldots,|C|$, has three blue neighbours and hence, is coloured blue itself. Further, since $S_3$ is monochromatic by Claim~\ref{c-p17-1}, all the vertices $u^1_{c_j},u^2_{c_j}$, for $j=1,\ldots,|C|$, are coloured blue. Thus, both neighbours of each auxiliary vertex are blue, which forces the auxiliary vertices to be blue themselves. It follows that all vertices in $G_{\mathcal{I}}$ are coloured blue. Hence, the colouring is not valid, a contradiction.
\end{claimproof}

\begin{claim1}\label{c-p17-3}
For every variable $x_i\in X$, $i\in\{1,\ldots,|X|\}$, $U_{x_i}$ and $V_{x_i}$ have different colours.
\end{claim1}
\begin{claimproof}
Suppose for a contradiction that for some variable $x_i\in X$, $i\in\{1,\ldots,|X|\}$,
$U_{x_i}$ and $V_{x_i}$ have the same colour. We may assume without loss of generality that they are both coloured blue. Since $s_1^i$ and $s_2^i$ are both adjacent to $u^s_{x_i}$ and to $v^s_{x_i}$, it follows that they are both coloured blue. Now it follows from Claim~\ref{c-p17-1}, that $S_1$ and $S_2$ must both be coloured blue, a contradiction to Claim~\ref{c-p17-2}.
\end{claimproof}

\begin{claim1}\label{c-p17-4}
For every clause $c_j \in C$, exactly two neighbours of $v_{c_j}$ have the same colour as~$v_{c_j}$.
\end{claim1}
\begin{claimproof}
We may assume without loss of generality that $v_{c_j}$ is coloured blue.
Let $ c_j = (x_{i_1} \vee x_{i_2} \vee x_{i_3})$. Let $v^1_{x_{i_1}},v^2_{x_{i_2}},v^3_{x_{i_3}}$ be the three neighbours of $v_{c_j}$ outside of $S_3$ and let $u^1_{x_{i_1}}, u^2_{x_{i_2}}$ and $u^3_{x_{i_3}}$ be the neighbours of the auxiliary vertices $a^k_{c_j}$, $k=1,\ldots,6$. By definition of a valid red-blue-colouring, $v_{c_j}$ has at least two neighbours outside of $S_3$ that are coloured blue. Suppose for a contradiction that the vertices $v^1_{x_{i_1}},v^2_{x_{i_2}},v^3_{x_{i_3}}$ are all coloured blue. Then, it follows from Claim~\ref{c-p17-1}, that $V_{x_{i_1}}, V_{x_{i_2}}, V_{x_{i_3}}$  must be coloured blue. Notice that, since $v_{c_j}$ is coloured blue, Claim~\ref{c-p17-1} also implies that all vertices in $S_3$, and in particular $u^1_{c_j}$ and $u^2_{c_j}$, are coloured blue.  Let $A_1$ (resp. $A_2$) be the set of auxiliary vertices which are adjacent to $u^1_{c_j}$ (resp. $u^2_{c_j}$). Then, at least two vertices in $A_1$ and two vertices in $A_2$ are coloured blue. Since $u^1_{x_{i_1}}, u^2_{x_{i_2}}$ and $u^3_{x_{i_3}}$ have each one neighbour in $A_1$ and one neighbour in $A_2$, it follows that one of them has two blue neighbours in $A_1\cup A_2$, and is therefore coloured blue. We may assume, without loss of generality, that this vertex is $u^1_{x_{i_1}}$. Using Claim~\ref{c-p17-1} again, we get that $U_{x_{i_1}}$ is coloured blue, a contradiction to Claim~\ref{c-p17-3}.
\end{claimproof}

\noindent 
We continue as follows.
By Claim~\ref{c-p17-1}, 
we may assume without loss of generality that $S_3$ is coloured blue. Then, we set every variable $x_i\in X$, $i\in \{1,\ldots,|X|\}$, to true for which $V_{x_i}$ has been coloured red. We set all other variables to false. By Claim~\ref{c-p17-4}, we know that for each clause $c_j \in C$, $j\in\{1,\ldots,|C|\}$, there exists exactly one red neighbour of $v_{c_j}$. Hence, in every clause, exactly one literal is set to true. Since by Claim~\ref{c-p17-1}, every $V_{x_i}$, $i\in \{1,\ldots,|X|\}$, is monochromatic, it follows that no variable gets both values true and false. Thus, $\mathcal{I}$ admits a truth assignment such that each clause contains exactly one true literal.\\

\begin{figure}[ht]
\centering
\begin{tikzpicture}

\small
\node[bvertex, label={[label distance = -0.15cm]above left:$v_{c_j}$}](v) at (0,0){};
\node[bvertex](v1) at (90:1){};
\node[bvertex](v2) at (180:1){};
\node[rvertex](v3) at (270:1){};
\draw[hedge](v)--(v1);
\draw[hedge](v)--(v2);
\draw[hedge](v)--(v3);

\begin{scope}[shift={(2,0.75)},yscale=0.7, xscale=0.5, rotate = 270]
\node[bvertex, label=above:$u_{c_j}^1$](uc1) at (0,0){};
\node[bvertex, label=below:$u_{c_j}^2$](uc2) at (2,0){};

\draw[hedge](v)--(uc1);
\draw[hedge](v)--(uc2);

\node[bvertex](ac1) at (-1,2){};
\node[bvertex](ac2) at (0,2){};
\node[rvertex](ac3) at (2,2){};
\node[bvertex](ac4) at (3,2){};
\node[bvertex](ac5) at (0.5,2){};
\node[rvertex](ac6) at (1.5,2){};

\node[bvertex](u1) at (-2,4){};
\node[rvertex](u2) at (4,4){};
\node[rvertex](u3) at (1,4){};

\draw[hedge](uc1)--(uc2);

\draw[hedge](uc1)--(ac1);
\draw[hedge](uc1)--(ac3);
\draw[hedge](uc1)--(ac5);
\draw[hedge](uc2)--(ac2);
\draw[hedge](uc2)--(ac4);
\draw[hedge](uc2)--(ac6);

\draw[hedge](ac1)--(u1);
\draw[hedge](ac2)--(u1);
\draw[hedge](ac3)--(u2);
\draw[hedge](ac4)--(u2);
\draw[hedge](ac5)--(u3);
\draw[hedge](ac6)--(u3);
\end{scope}

\end{tikzpicture}
\caption{\label{fig-p17-colouring}The given valid red-blue-colouring of the clause gadget for $c_j$.}
\end{figure}
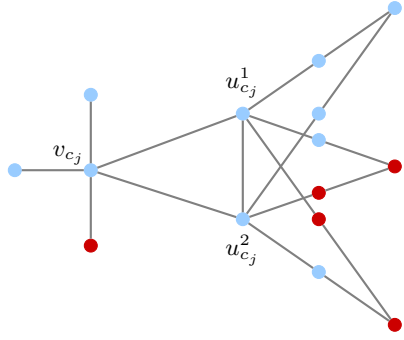

\noindent
Now suppose that $\mathcal{I}$ admits a truth assignment such that each clause contains exactly one true literal. For every variable $x_i\in X$, $i\in\{1,\ldots,|X|\}$, that is set to true, we colour $V_{x_i}$ red and $U_{x_i}$ blue; for every other variable $x_i\in X$, we colour $V_{x_i}$ blue and $U_{x_i}$ red. It follows that every vertex $v_{c_j}$, for $c_j\in C$ and $j\in\{1,\ldots,|C|\}$, has exactly one red and two blue neighbours outside of $S_3$, since the truth assignment is such that each clause contains exactly one true literal. Thus, we colour $S_3$ blue. 
If we consider a clause $c_j = (x_{i_1} \vee x_{i_2} \vee x_{i_3})$, we know that exactly one of $U_{x_{i_1}}, U_{x_{i_2}}, U_{x_{i_3}}$ is blue and the other ones are red.  Assume, without loss of generality, that $U_{x_{i_1}}$ is blue and consider $a^1_{c_{j}}, a^4_{c_{j}}$, the neighbours of  $u^1_{x_{i_1}}\in U_{x_{i_1}}$. We then colour $a^1_{c_{j}}, a^4_{c_{j}}$ blue, since they have two blue neighbours ($u^1_{x_{i_1}}$ and $u^1_{c_j}$ resp. $u^2_{c_j}$).  To obtain a valid red-blue colouring of $G_{\mathcal{I}}$, we have to colour one of the vertices $a^2_{c_{j}}, a^5_{c_{j}}$ blue and the other one red, and similarly, one of the vertices $a^3_{c_{j}}, a^6_{c_{j}}$ blue and the other one red. Since $u^1_{c_j},u^2_{c_j}$ are both coloured blue and each of them can have at most one red neighbour, we colour $a^2_{c_{j}}, a^6_{c_{j}}$ blue and $a^3_{c_{j}}, a^5_{c_{j}}$ red (see Figure \ref{fig-p17-colouring}). Finally, the only vertices that remain uncoloured are the vertices in $S_1$ and $S_2$. Here, the only restriction is that $S_1$ and $S_2$ are coloured differently, hence we colour for instance $S_1$ blue and $S_2$ red. This clearly gives us a valid red-blue-colouring of $G_{\mathcal{I}}$.

\medskip
\noindent
To complete the proof, it remains to show that $G_{\mathcal{I}}$ is $(4P_5,P_{19})$-free. Let $P$ be a longest induced path in $G_{\mathcal{I}}$. $P$ can contain at most two vertices from a same clique, since otherwise it would not be induced. Also, if $P$ contains two vertices from a same clique, then these vertices are necessarily consecutive in $P$. Let $W_1,W_2 \in \set{U_{x_i},V_{x_i} |\ x_i \in X}$. By construction, every path from a vertex in $W_1$ to a vertex in $W_2$ contains at least one vertex from one of the cliques $S_1,S_2,S_3$. Hence, $P$ can intersect at most 4 cliques belonging to some variable gadgets; further, if it does intersect 4 cliques belonging to some variable gadgets, then it intersects each of the cliques $S_1,S_2,S_3$ as well. Notice that after respectively before intersecting $S_3$, $P$ may contain at most one auxiliary vertex (see Figure \ref{fig-p17-path} for an example). Finally, the end-vertices of $P$ may correspond each to an auxiliary vertex. So $P$ contains at most 18 vertices, and thus, $G_{\mathcal{I}}$ is indeed $P_{19}$-free.

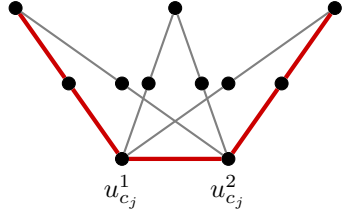
\begin{figure}[ht]
\centering
\begin{tikzpicture}
\begin{scope}[yscale=0.5, xscale=0.7]
\node[vertex, label=below:$u_{c_j}^1$](uc1) at (0,0){};
\node[vertex, label=below:$u_{c_j}^2$](uc2) at (2,0){};

\node[vertex](ac1) at (-1,2){};
\node[vertex](ac2) at (0,2){};
\node[vertex](ac3) at (2,2){};
\node[vertex](ac4) at (3,2){};
\node[vertex](ac5) at (0.5,2){};
\node[vertex](ac6) at (1.5,2){};

\node[vertex](u1) at (-2,4){};
\node[vertex](u2) at (4,4){};
\node[vertex](u3) at (1,4){};

\draw[gedge](uc1)--(uc2);

\draw[gedge](uc1)--(ac1);
\draw[hedge](uc1)--(ac3);
\draw[hedge](uc1)--(ac5);
\draw[hedge](uc2)--(ac2);
\draw[gedge](uc2)--(ac4);
\draw[hedge](uc2)--(ac6);

\draw[gedge](ac1)--(u1);
\draw[hedge](ac2)--(u1);
\draw[hedge](ac3)--(u2);
\draw[gedge](ac4)--(u2);
\draw[hedge](ac5)--(u3);
\draw[hedge](ac6)--(u3);
\end{scope}

\end{tikzpicture}
\caption{\label{fig-p17-path}A path intersecting a clause gadget and containing two auxiliary vertices.}
\end{figure}

Let $P$ now be a $P_5$ in $G_\mathcal{I}$.  Assume that $P$ does not contain any vertex of the cliques $S_1,S_2,S_3$. Then, $P$ can only contain vertices from variable gadgets and auxiliary vertices. As mentioned above, $P$ can contain at most two vertices from a same clique, and further every path from a vertex in $W_1$ to a vertex in $W_2$, where $W_1,W_2 \in \set{U_{x_i},V_{x_i} |\ x_i \in X}$, contains at least one vertex from one of the cliques $S_1,S_2,S_3$. Hence, $P$ has length at most 3, a contradiction. We conclude that $P$ must intersect at least one of the cliques $S_1$, $S_2$, $S_3$. It follows that there can exist at most 3 pairwise induced paths of length 4, so $G_\mathcal{I}$ is $4P_5$-free.
\end{proof}

\begin{remark}\label{remark-p19}
It can be verified that the graph $G_{\mathcal{I}}$  in the proof Theorem~\ref{t-p19} is not $P_{18}$-free and not $sP_4$-free for any $s\geq 1$.
\end{remark} 

\begin{remark}\label{remark-p27}
In the graph $G_{\mathcal{I}}$ from the proof of Theorem~\ref{t-p19}, no vertex of~$U_{x_i}$ is adjacent to a vertex of $V_{x_i}$, for any $x_i \in X$. In Feghali's construction~\cite{Fe23}, there is an edge between any two such cliques. This implies that an induced path can use four consecutive vertices inside the same variable gadget. Via similar arguments as in our proof, one can easily show that Feghali's construction has an induced $P_{26}$, but no induced $P_{27}$ (and thus it is $P_{27}$-free).
\end{remark}

\medskip
\noindent
We now modify the construction in the proof of Theorem~\ref{t-p19} to obtain the following result for {\sc Disconnected Perfect Matching}, which addresses a question of Bouquet and Picouleau~\cite{BP}.

\begin{theorem}\label{t-p23}
{\sc Disconnected Perfect Matching} is \NP-complete for $(4P_7,P_{23})$-free graphs.
\end{theorem}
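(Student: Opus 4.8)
The plan is to reduce from \textsc{Exact Positive 1-in-3 SAT} again, re-using the graph $G_{\mathcal{I}}$ built in the proof of Theorem~\ref{t-p19} as a template and modifying it so that (i) the matching cut we are looking for extends to a perfect matching, and (ii) the diameter/path-length parameters become $P_{23}$ and $4P_7$ rather than $P_{19}$ and $4P_5$. By Observation~\ref{o}(ii) it suffices to show that $G_{\mathcal{I}}$ (modified) has a perfect-extendable red-blue colouring if and only if $\mathcal{I}$ is a yes-instance. The key structural obstruction is that in the colouring constructed in the proof of Theorem~\ref{t-p19}, the matching cut is small (only the matched vertices at the interfaces are saturated), so most of each monochromatic clique or gadget is not covered by the matching cut; to make the colouring perfect-\emph{extendable} we must guarantee that each monochromatic side, after deleting its interface vertices, still carries a perfect matching. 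A clean way to achieve this is to attach, to each vertex of $G_{\mathcal{I}}$ (or to each vertex that is not already conveniently paired), a pendant-like gadget — for instance replacing each original vertex $v$ by an edge $vv'$, or adding a private $P_2$ or $P_4$ hanging off $v$ — so that $v'$ can always absorb $v$ into the perfect matching inside whichever side $v$ lands in, while not creating any new matching cut and not decreasing the induced-path bound too much. The extra pendant structure is also what pushes the induced-path length up from $19$ to $23$ and the $P_5$'s to $P_7$'s (four extra vertices, two added at each end of a longest path), which matches the statement.

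Concretely, the steps I would carry out are as follows. \textbf{Step 1: membership in \NP.} This is immediate: given a red-blue colouring one checks in polynomial time that it is valid and that $G[R\setminus R']$ and $G[B\setminus B']$ both have perfect matchings. \textbf{Step 2: the construction.} Start from $G_{\mathcal{I}}$ and modify the variable gadgets, the cliques $S_1,S_2,S_3$, the clause gadgets, and the auxiliary vertices so that every vertex has a private partner to be matched with inside its own colour class; one must be slightly careful with the clique sizes (cliques of size $4$ have a perfect matching, so they are already fine internally, but the \emph{matched} vertices $u^s_{x_i},v^s_{x_i}$, the $v_{c_j}$'s and the interface auxiliary vertices need partners outside the clique), and with the degree-constraints if one wants to keep the graph in a nice subclass (though here no degree bound is claimed). \textbf{Step 3: forward direction.} Replay Claims~\ref{c-p17-1}--\ref{c-p17-4} essentially verbatim — they only used that $S_1,S_2,S_3$ and the gadget cliques are cliques of size $\geq 3$ and the adjacency pattern among them, all of which is preserved — to extract from a perfect-extendable colouring a valid one, hence a $1$-in-$3$ truth assignment exactly as before. \textbf{Step 4: backward direction.} Given a satisfying assignment, take the valid colouring of Theorem~\ref{t-p19} (extended to the new pendant vertices by colouring each $v'$ the same colour as $v$), and verify it is perfect-extendable: every original vertex $v$ that is not at an interface is matched to its private partner $v'$ inside its colour class, every interface vertex is matched across the cut, and one checks the (few) interface vertices together with their partners still leave a perfectly matchable graph on each side — this is a finite case check on the clause gadget of Figure~\ref{fig-p17-colouring} plus a trivial check on the variable gadgets and the $S_i$'s.

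\textbf{Step 5: the forbidden-subgraph bounds.} Re-run the longest-induced-path analysis of the proof of Theorem~\ref{t-p19}. A longest induced path still crosses at most four variable-gadget cliques, must then meet all of $S_1,S_2,S_3$, and picks up at most one auxiliary vertex on each side of $S_3$; the new pendant vertices add at most a bounded number of extra vertices (two at each end, none in the middle, since a pendant vertex has degree one and hence can only be an end-vertex of an induced path), giving $P_{23}$-freeness. For $4P_7$-freeness: any induced $P_7$ must, as before, hit one of $S_1,S_2,S_3$ (a path avoiding all three lives in variable gadgets plus auxiliary plus pendant vertices and is too short — one must recount with the pendants, but the point is the same: without an $S_i$ vertex you cannot connect two distinct variable-gadget cliques and you cannot string together enough auxiliary/pendant vertices), so there are at most three pairwise-disjoint induced $P_7$'s.

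The main obstacle I anticipate is \textbf{Step 4}: one has to design the pendant/partner gadget so that it \emph{simultaneously} (a) lets the "unmatched" side of every monochromatic region be perfectly matched in \emph{both} colour assignments that can occur, (b) creates no spurious matching cut (so that the forward direction's equivalence is not broken — every new valid colouring must still correspond to a $1$-in-$3$ assignment), and (c) does not blow up the induced-path length beyond $22$. Getting all three at once is the delicate part; the clause-gadget case check in Figure~\ref{fig-p17-colouring}, now with partners attached, is where the argument has real content, whereas Steps 1, 3 and 5 are routine adaptations of the Theorem~\ref{t-p19} proof.
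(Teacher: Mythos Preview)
Your pendant-gadget idea has a fatal flaw that you flag as concern~(b) but do not actually address: attaching a private degree-$1$ partner $v'$ to every vertex~$v$ makes \emph{every} resulting graph a yes-instance of \textsc{Disconnected Perfect Matching}, irrespective of the SAT instance. Indeed, colour a single pendant~$v'$ blue and everything else red. This is a valid red-blue colouring (the unique bichromatic edge is $vv'$), and it is perfect-extendable because $B\setminus B'=\emptyset$ while $R\setminus R'$ consists of all remaining original vertices~$w$ together with their pendants~$w'$, so the pendant edges themselves give a perfect matching of $G[R\setminus R']$. Hence your Step~3 breaks down: Claims~\ref{c-p17-1}--\ref{c-p17-4} no longer force the intended structure (in particular the proof of Claim~\ref{c-p17-2} fails, since ``all original vertices blue'' is now compatible with a valid colouring), and the forward direction of the reduction is lost. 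The same objection applies to a hanging $P_4$, or to any attachment whose far end has degree~$1$.

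The paper avoids low-degree attachments altogether. Instead it \emph{enlarges and duplicates} the existing gadgets so that matchability is achieved internally: the variable-gadget cliques $U_{x_i},V_{x_i}$ grow from $K_4$ to $K_7$ (so that once $u^s_{x_i}$ or $v^s_{x_i}$ is matched across to $S_1\cup S_2$, an even number of clique vertices remain), the clause gadget is doubled (two $v$-vertices, four $u$-vertices, twelve auxiliary vertices), and crucially the auxiliary vertices are paired by new edges $a^{\ell}_{c_j}a^{\ell+6}_{c_j}$, which supply the missing matching edges inside the blue side. These auxiliary--auxiliary edges are also what lengthens induced paths: a path through the clause gadget can now carry \emph{two} consecutive auxiliary vertices on each side of $S_3$ instead of one, which accounts for the jump from $P_{19}$ to $P_{23}$ and from $4P_5$ to $4P_7$ --- not two extra pendant vertices at each end as you suggest.
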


\begin{proof}
We first note that {\sc Disconnected Perfect Matching} belong to \NP, as we can verify in polynomial time if a given  perfect-extendable red-blue-colouring is valid or not.

In order to prove \NP-hardness, we reduce from {\sc Exact Positive 1-in-3 SAT}, which is \NP-complete by Theorem \ref{t-sc10}. 
Let $\mathcal{I}$ be an instance of 
 {\sc Exact Positive 1-in-3 SAT} 
with variable set~$X$ and clause set $C$. We build a graph $G_{\mathcal{I}}$ similarly to the graph in Theorem~\ref{t-p23} (see also Figures~\ref{fig-p23-const1} and~\ref{fig-p23-const2}): 

\begin{itemize}
\item for every $x_i \in X$, construct a variable gadget consisting of two disjoint cliques of size~$7$, $U_{x_i}$ and $V_{x_i}$, with vertex set $\set{u^s_{x_i}, u^1_{x_i},\ldots,u^6_{x_i}}$ and $\set{v^s_{x_i}, v^1_{x_i}, \ldots, v^6_{x_i}}$, respectively;
\item add two cliques $S_1$ and $S_2$ with vertex set $\{s_1^1,\ldots,s_1^{|X|}\}$ and $\{s_2^1,\ldots,s_2^{|X|}\}$, respectively;
\item for every $i\in \{1,\ldots,|X|\}$, add the edges $s_1^iu^s_{x_i},s_1^iv^s_{x_i},s_2^iu^s_{x_i},s_2^iv^s_{x_i}$;
\item for every $c_j\in C$, construct a clause gadget on clause vertices $v^1_{c_j},v^2_{c_j}, u^1_{c_j},u^2_{c_j},u^3_{c_j},u^4_{c_j}$, and auxiliary vertices $a^1_{c_j}, \ldots, a^{12}_{c_j}$. 
\item Add edges between all clause vertices of all clause gadgets to obtain a clique $S_3$.
\item for every $j\in \{1,\ldots,|C|\}$, add the edges $u^1_{c_j}a^\ell_{c_j}$, for $\ell=1,2,3$, $u^2_{c_j}a^{\ell}_{c_j}$, for $\ell=4,5,6$, $u^3_{c_j}a^{\ell}_{c_j}$, for $\ell=7,8,9$ and $u^4_{c_j}a^{\ell}_{c_j}$, for $\ell=10,11,12$. Add also the edges $a_{c_j}^\ell a_{c_j}^{\ell+6}$, for $\ell=1,\dots, 6$;
\item for every $x_i\in X$ occurring in clauses $c_{j_1},c_{j_2}$ and $c_{j_3}$, add the edges $v^k_{x_i}v^1_{c_{j_k}}$ and $v^{k+3}_{x_i}v^2_{c_{j_k}}$, for $k=1,2,3$; and
\item for every $c_j\in C$ such that $c_j=x_{i_1}\vee x_{i_2}\vee x_{i_3}$, add the edges  $u^k_{x_{i_k}}a^k_{c_{j}},u^k_{x_{i_k}}a^{k+3}_{c_{j}},u^{k+3}_{x_{i_k}}a^{k+6}_{c_{j}}$ and $u^{k+3}_{x_{i_k}}a^{k+9}_{c_{j}}$, for $k=1,2,3$. 
\end{itemize}

\begin{figure}[ht]
\centering
\begin{tikzpicture}
\begin{scope}[scale=0.75]
\small
	\begin{scope}[shift={(0,0)}, scale = 0.85]
	\node[vertex, label= {[label distance=-0.25cm] above right:$v_{x_i}^1$}](v1) at ( 360/7:1){};
	\node[vertex, label= {[label distance=-0.1cm] above:$v_{x_i}^2$}](v2) at ( 2*360/7:1){};
	\node[vertex,label={[label distance=-0.2cm] above left: $v_{x_i}^3$}](v3) at ( 3*360/7:1){};
	\node[vertex, label={[label distance=-0.17cm] left: $v_{x_i}^4$}](v4) at ( 4*360/7:1){};
	\node[vertex, label={[label distance=-0.3cm] below left: $v_{x_i}^5$}](v5) at ( 5*360/7:1){};
	\node[vertex, label={[label distance=-0.0cm] right: $v_{x_i}^6$}](v6) at ( 6*360/7:1){};
	\node[vertex, label={[label distance=-0.25cm] above right: $v_{x_i}^s$}](v7) at ( 0:1){};
	\foreach \i in {1,...,7}{
			\foreach \j in {1,...,\i}{
			\draw[hedge](v\i)--(v\j);
			}
		}
		\end{scope}
		
		\begin{scope}[shift = {(3.4,0)}, scale = 0.85]
	\node[vertex, label={[label distance=-0.1cm]above:\strut $u_{x_i}^2$}](u1) at ( 360/7+360/14:1){};
	\node[vertex, label={[label distance=-0.2cm]above left:\strut $u_{x_i}^1$}](u2) at ( 2*360/7+360/14:1){};
	\node[vertex, label={[label distance=-0.2cm]above left:\strut $u_{x_i}^s$}](u3) at ( 3*360/7+360/14:1){};
	\node[vertex, label={[label distance=-0.0cm]left:\strut $u_{x_i}^6$}](u4) at ( 4*360/7+360/14:1){};
	\node[vertex, label={[label distance=-0.25cm]below right:\strut $u_{x_i}^5$}](u5) at ( 5*360/7+360/14:1){};
	\node[vertex, label={[label distance=-0.1cm]right:\strut $u_{x_i}^4$}](u6) at ( 6*360/7+360/14:1){};
	\node[vertex, label={[label distance=-0.25cm]above right:\strut $u_{x_i}^3$}](u7) at ( 0+360/14:1){};
	\foreach \i in {1,...,7}{
			\foreach \j in {1,...,\i}{
			\draw[hedge](u\i)--(u\j);
			}
		}
		\end{scope}
		\end{scope}

		\begin{scope}[scale = 0.85]
		\draw[hedge](0.5,-2.25) circle (0.7);
		\draw[hedge](2.5,-2.25) circle (0.7);
		
		\node[vertex, label={[label distance=-0.2cm]above left:\strut $s_1^i$}](s1) at ( 0.5,-1.55){};
		\node[vertex, label={[label distance=-0.2cm]above right:\strut $s_2^i$}](s2) at ( 2.5,-1.55){};
		
		\node[](S1) at ( 0.5,-2.25){$S_1$};
		\node[](S2) at ( 2.5,-2.25){$S_2$};
		
		\begin{scope}[shift={(0.5,-2.25)}]
		\foreach \i in {1,...,8}{
			\node[vertex](s1\i) at ( 360/8*\i: 0.7){};
		}
		\end{scope}
			\begin{scope}[shift={(2.5,-2.25)}]
		\foreach \i in {1,...,8}{
			\node[vertex](s1\i) at ( 360/8*\i: 0.7){};
		}
		\end{scope}
		
		\draw[hedge](s1)--(v7);
		\draw[hedge](s1)--(u3);
		\draw[hedge](s2)--(v7);
		\draw[hedge](s2)--(u3);	
		\end{scope}	

\end{tikzpicture}
\caption{\label{fig-p23-const1}The cliques $S_1$ and $S_2$, together with the variable gadget of $x_i$.}
\end{figure}
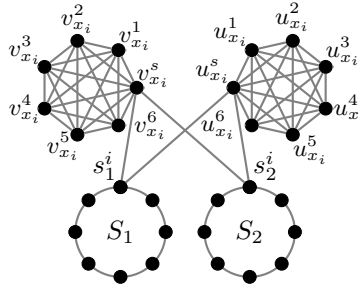

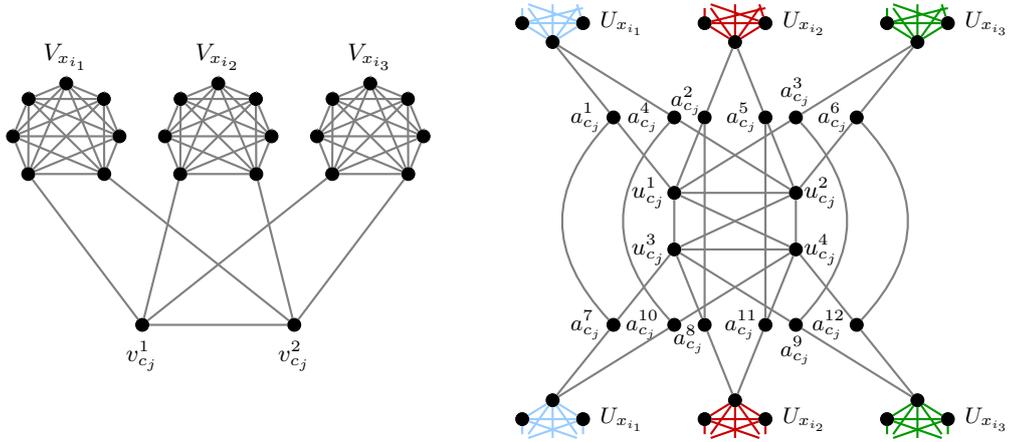
\begin{figure}[ht]
\centering
\begin{tikzpicture}
\small

\node[vertex, label=below:$v_{c_j}^1$](v1) at (0,0){};
\node[vertex, label=below:$v_{c_j}^2$](v2) at (2,0){};

\node[vertex, ](vx1) at (-1.5,2){};
\node[vertex,](vx2) at (-0.5,2){};
\node[vertex,](vx3) at (0.5,2){};
\node[vertex](vx4) at (1.5,2){};
\node[vertex](vx5) at (2.5,2){};
\node[vertex](vx6) at (3.5,2){};

\draw[hedge](v1)--(v2);
\draw[hedge](v1)--(vx1);
\draw[hedge](v1)--(vx3);
\draw[hedge](v1)--(vx5);
\draw[hedge](v2)--(vx2);
\draw[hedge](v2)--(vx4);
\draw[hedge](v2)--(vx6);
\draw[hedge](vx1)--(vx2);
\draw[hedge](vx3)--(vx4);
\draw[hedge](vx5)--(vx6);

\begin{scope}[shift= {(-1,2.5)}, scale = 0.7]
\node[](a) at (0,1.5) {$V_{x_{i_1}}$};
\node[vertex](v11) at (180:1){};
\node[vertex](v12) at (135:1){};
\node[vertex](v13) at (90:1){};
\node[vertex](v14) at (45:1){};
\node[vertex](v15) at (0:1){};
\end{scope}

\foreach \i in {1,...,5}{
	\draw[hedge](vx1)--(v1\i);
	\draw[hedge](vx2)--(v1\i);
	\foreach \j in {\i,...,5}{
		\draw[hedge](v1\i) -- (v1\j);
	}
	}
	
	\begin{scope}[shift= {(1,2.5)}, scale = 0.7]
	\node[](a) at (0,1.5) {$V_{x_{i_2}}$};
\node[vertex](v21) at (180:1){};
\node[vertex](v22) at (135:1){};
\node[vertex](v23) at (90:1){};
\node[vertex](v24) at (45:1){};
\node[vertex](v25) at (0:1){};
\end{scope}

\foreach \i in {1,...,5}{
	\draw[hedge](vx3)--(v2\i);
	\draw[hedge](vx4)--(v2\i);
	\foreach \j in {\i,...,5}{
		\draw[hedge](v2\i) -- (v2\j);
	}
	}
	
	\begin{scope}[shift= {(3,2.5)}, scale = 0.7]
	\node[](a) at (0,1.5) {$V_{x_{i_3}}$};
\node[vertex](v31) at (180:1){};
\node[vertex](v32) at (135:1){};
\node[vertex](v33) at (90:1){};
\node[vertex](v34) at (45:1){};
\node[vertex](v35) at (0:1){};
\end{scope}

\foreach \i in {1,...,5}{
	\draw[hedge](vx5)--(v3\i);
	\draw[hedge](vx6)--(v3\i);
	\foreach \j in {\i,...,5}{
		\draw[hedge](v3\i) -- (v3\j);
	}
	}

/////////////////////////////////////////////////////////////
//Right part of the figure starts here////
////////////////////////////////////////////////////////////

\begin{scope}[shift = {(7,1.75)},yscale=0.5, xscale=0.8]
\node[vertex, label={[label distance=-0.1cm]left:$u_{c_j}^1$}](uc1) at (0,0){};
\node[vertex, label={[label distance=-0.1cm]right:$u_{c_j}^2$}](uc2) at (2,0){};
\node[vertex, label={[label distance=-0.1cm]left:$u_{c_j}^3$}](uc3) at (0,-1.5){};
\node[vertex, label={[label distance=-0.1cm]right:$u_{c_j}^4$}](uc4) at (2,-1.5){};

\footnotesize
\node[vertex, label={[label distance=-0.05cm]left:$a_{c_j}^1$}](ac1) at (-1,2){};
\node[vertex, label={[label distance=-0.0cm]left:$a_{c_j}^4$}](ac4) at (0,2){};
\node[vertex, label={[label distance=-0.05cm]above:$a_{c_j}^3$}](ac3) at (2,2){};
\node[vertex, label={[label distance=-0.1cm]left:$a_{c_j}^6$}](ac6) at (3,2){};
\node[vertex, label={[label distance=-0.2cm]above left:$a_{c_j}^2$}](ac2) at (0.5,2){};
\node[vertex, label={[label distance=-0.1cm]left:$a_{c_j}^5$}](ac5) at (1.5,2){};

\begin{scope}[shift= {(0,0.5)}]
\node[vertex, label={[label distance=-0.05cm]left:$a_{c_j}^7$}](ac7) at (-1,-4){};
\node[vertex, label={[label distance=-0.0cm]left:$a_{c_j}^{10}$}](ac10) at (0,-4){};
\node[vertex, label={[label distance=-0.05cm]below:$a_{c_j}^9$}](ac9) at (2,-4){};
\node[vertex, label={[label distance=-0.05cm]left:$a_{c_j}^{12}$}](ac12) at (3,-4){};
\node[vertex, label={[label distance=-0.25cm]below left:$a_{c_j}^8$}](ac8) at (0.5,-4){};
\node[vertex, label={[label distance=-0.1cm]left:$a_{c_j}^{11}$}](ac11) at (1.5,-4){};
\end{scope}

\node[vertex](u1) at (-2,4){};
\node[vertex](u3) at (4,4){};
\node[vertex](u2) at (1,4){};

\begin{scope}[shift= {(0,0.5)}]
\node[vertex](u4) at (-2,-6){};
\node[vertex](u6) at (4,-6){};
\node[vertex](u5) at (1,-6){};
\end{scope}

\draw[hedge](uc1)--(uc2);
\draw[hedge](uc1)--(uc3);
\draw[hedge](uc1)--(uc4);
\draw[hedge](uc2)--(uc3);
\draw[hedge](uc2)--(uc4);
\draw[hedge](uc3)--(uc4);

\draw[hedge](uc1)--(ac1);
\draw[hedge](uc1)--(ac3);
\draw[hedge](uc1)--(ac2);
\draw[hedge](uc2)--(ac5);
\draw[hedge](uc2)--(ac4);
\draw[hedge](uc2)--(ac6);

\draw[hedge](ac1)--(u1);
\draw[hedge](ac4)--(u1);
\draw[hedge](ac2)--(u2);
\draw[hedge](ac5)--(u2);
\draw[hedge](ac3)--(u3);
\draw[hedge](ac6)--(u3);

\draw[hedge](uc3)--(ac7);
\draw[hedge](uc3)--(ac9);
\draw[hedge](uc3)--(ac8);
\draw[hedge](uc4)--(ac11);
\draw[hedge](uc4)--(ac10);
\draw[hedge](uc4)--(ac12);

\draw[hedge](ac7)--(u4);
\draw[hedge](ac10)--(u4);
\draw[hedge](ac8)--(u5);
\draw[hedge](ac11)--(u5);
\draw[hedge](ac9)--(u6);
\draw[hedge](ac12)--(u6);

\draw[hedge, ](ac1) to [bend right = 30](ac7);
\draw[hedge](ac2) to [bend right = 0](ac8);
\draw[hedge](ac3) to [bend left = 30](ac9);
\draw[hedge](ac4) to [bend right = 30](ac10);
\draw[hedge](ac5) to [bend left = 0](ac11);
\draw[hedge](ac6) to [bend left = 30](ac12);

//blueclique
\node[](a) at (-0.85,4.5) {$U_{x_{i_1}}$};
\node[vertex](x1) at (-2.5,4.5){};
\node[vertex](x2) at (-1.5,4.5){};

\begin{scope}[shift= {(0,0.5)}]
\node[](a) at (-0.85 ,-6.5) {$U_{x_{i_1}}$};
\node[vertex](x3) at (-2.5,-6.5){};
\node[vertex](x4) at (-1.5,-6.5){};
\end{scope}

\draw[bluedge](x1)--(u1);
\draw[bluedge](x2)--(u1);
\draw[bluedge](x1)--(x2);

\draw[bluedge](x1)--(-2.5,4.9);
\draw[bluedge](x1)--(-1.5,4.9);
\draw[bluedge](x2)--(-1.6,5);
\draw[bluedge](x2)--(-2.4,5);
\draw[bluedge](-2,5)--(u1);
\draw[bluedge](-1.6,4.9)--(u1);
\draw[bluedge](-2.4,4.9)--(u1);

\draw[bluedge](x3)--(u4);
\draw[bluedge](x4)--(u4);
\draw[bluedge](x3)--(x4);

\begin{scope}[shift= {(0,0.5)}]
\draw[bluedge](x3)--(-2.5,-6.9);
\draw[bluedge](x3)--(-1.6,-7);
\draw[bluedge](x4)--(-1.5,-6.9);
\draw[bluedge](x4)--(-2.4,-7);
\draw[bluedge](-2,-7)--(u4);
\draw[bluedge](-1.6,-6.9)--(u4);
\draw[bluedge](-2.4,-6.9)--(u4);
\end{scope}

//redclique
\node[](a) at (2.15,4.5) {$U_{x_{i_2}}$};
\node[vertex](y1) at (0.5,4.5){};
\node[vertex](y2) at (1.5,4.5){};

\begin{scope}[shift= {(0,0.5)}]
\node[](a) at (2.15 ,-6.5) {$U_{x_{i_2}}$};
\node[vertex](y3) at (0.5,-6.5){};
\node[vertex](y4) at (1.5,-6.5){};
\end{scope}

\draw[rededge](y1)--(u2);
\draw[rededge](y2)--(u2);
\draw[rededge](y1)--(y2);

\draw[rededge](y1)--(0.5,4.9);
\draw[rededge](y1)--(1.5,4.9);
\draw[rededge](y2)--(1.4,5);
\draw[rededge](y2)--(0.6,5);
\draw[rededge](1,5)--(u2);
\draw[rededge](1.4,4.9)--(u2);
\draw[rededge](0.6,4.9)--(u2);

\draw[rededge](y3)--(u5);
\draw[rededge](y4)--(u5);
\draw[rededge](y3)--(y4);

\begin{scope}[shift= {(0,0.5)}]
\draw[rededge](y3)--(0.5,-6.9);
\draw[rededge](y3)--(1.4,-7);
\draw[rededge](y4)--(1.5,-6.9);
\draw[rededge](y4)--(0.6,-7);
\draw[rededge](1,-7)--(u5);
\draw[rededge](1.4,-6.9)--(u5);
\draw[rededge](0.6,-6.9)--(u5);
\end{scope}

//greenclique
\node[](a) at (5.15,4.5) {$U_{x_{i_3}}$};
\node[vertex](z1) at (3.5,4.5){};
\node[vertex](z2) at (4.5,4.5){};

\begin{scope}[shift= {(0,0.5)}]
\node[](a) at (5.15 ,-6.5) {$U_{x_{i_3}}$};
\node[vertex](z3) at (3.5,-6.5){};
\node[vertex](z4) at (4.5,-6.5){};
\end{scope}

\draw[grnedge](z1)--(u3);
\draw[grnedge](z2)--(u3);
\draw[grnedge](z1)--(z2);

\draw[grnedge](z1)--(3.5,4.9);
\draw[grnedge](z1)--(4.5,4.9);
\draw[grnedge](z2)--(4.4,5);
\draw[grnedge](z2)--(3.6,5);
\draw[grnedge](4,5)--(u3);
\draw[grnedge](4.4,4.9)--(u3);
\draw[grnedge](3.6,4.9)--(u3);

\draw[grnedge](z3)--(u6);
\draw[grnedge](z4)--(u6);
\draw[grnedge](z3)--(z4);

\begin{scope}[shift= {(0,0.5)}]
\draw[grnedge](z3)--(3.5,-6.9);
\draw[grnedge](z3)--(4.4,-7);
\draw[grnedge](z4)--(4.5,-6.9);
\draw[grnedge](z4)--(3.6,-7);
\draw[grnedge](4,-7)--(u6);
\draw[grnedge](4.4,-6.9)--(u6);
\draw[grnedge](3.6,-6.9)--(u6);
\end{scope}

\end{scope}

\end{tikzpicture}
\caption{\label{fig-p23-const2}The clause gadget for the clause $c_j=x_{i_1}\vee x_{i_2}\vee x_{i_3}$. The vertices $v_{c_j}^1,v_{c_j}^2,u^1_{c_j},\ldots,u^4_{c_j}$ belong to clique $S_3$; for readability, we omitted the edges between $v_{c_j}^1,v_{c_j}^2$ and $u^1_{c_j},\ldots,u^4_{c_j}$. Moreover, coloured edges of the same colour belong to the same clique $U_{x_{i_j}}$, for $j=1,2,3$ (again, for readability).} 
\end{figure}

\noindent
We claim that $\mathcal{I}$ has a truth assignment such that each clause contains exactly one true literal if and only if $G_{\mathcal{I}}$ has a perfect-extendable red-blue-colouring. 

First suppose that $G_{\mathcal{I}}$ admits a valid perfect-extendable red-blue-colouring.
We start with some useful claims, the first three have the same proof as 
Claims \ref{c-p17-1}, \ref{c-p17-2}, \ref{c-p17-3} in the proof of Theorem \ref{t-p19}. 

\begin{claim1} \label{c-p23-1}
For any variable $x_i\in X$, $i=1,\ldots,|X|$, both $V_{x_i}$ and $U_{x_i}$ are monochromatic. Furthermore, $S_1$, $S_2$ and $S_3$ are also monochromatic.
\end{claim1}

\begin{claim1}\label{c-p23-2}
It holds that $S_1$ and $S_2$ have different colours.
\end{claim1}

\begin{claim1}\label{c-p23-3}
For every variable $x_i\in X$, $i\in\{1,\ldots,|X|\}$, the cliques $U_{x_i}$ and $V_{x_i}$ have different colours.
\end{claim1}

\begin{claim1}\label{c-p23-4}
For every clause $c_j \in C$, $j\in\{1,\ldots,|C|\}$, exactly two neighbours of $v^1_{c_j}$ (resp.~$v^2_{c_j}$) have the same colour as $v^1_{c_j}$ (resp. $v^2_{c_j}$).
\end{claim1}

\begin{claimproof}
We may assume, without loss of generality, that $v^1_{c_j}$ (resp. $v^2_{c_j}$) is coloured blue.
By symmetry, it is enough to prove the claim for $v^1_{c_j}$. Let $ c_j = (x_{i_1} \vee x_{i_2} \vee x_{i_3})$. Let $v^1_{x_{i_1}},v^2_{x_{i_2}},v^3_{x_{i_3}}$ be the three neighbours of $v^1_{c_j}$ outside of $S_3$ and let $u^1_{x_{i_1}}, u^2_{x_{i_2}}$ and $u^3_{x_{i_3}}$ be the neighbours of the auxiliary vertices $a^k_{c_j}$, $k=1,\ldots,6$. By definition of a perfect-extendable red-blue-colouring, $v^1_{c_j}$ has at least two neighbours outside of $S_3$ that are coloured blue. Suppose for a contradiction that the vertices $v^1_{x_{i_1}},v^2_{x_{i_2}},v^3_{x_{i_3}}$ are all coloured blue. Then, it follows from Claim~\ref{c-p23-1} that $V_{x_{i_1}}, V_{x_{i_2}}, V_{x_{i_3}}$  must be coloured blue. Notice that since $v^1_{c_j}$ is coloured blue, Claim~\ref{c-p23-1} also implies that all vertices in $S_3$,  in particular $u^1_{c_j}$ and $u^2_{c_j}$ are coloured blue.  Let $A_1$ (resp. $A_2$) be the set of auxiliary vertices which are a neighbour of $u^1_{c_j}$ (resp. $u^2_{c_j}$). Then, at least two vertices in $A_1$ and two vertices in $A_2$ are coloured blue. Since $u^1_{x_{i_1}}, u^2_{x_{i_2}}$ and $u^3_{x_{i_3}}$ have each one neighbour in $A_1$ and one neighbour in $A_2$, it follows that one of them has two blue neighbours in $A_1\cup A_2$, and is thus coloured blue. We may assume, without loss of generality, that this vertex is $u^1_{x_{i_1}}$. Using Claim~\ref{c-p23-1} again, we get that $U_{x_{i_1}}$ is coloured blue, a contradiction to Claim~\ref{c-p23-3}.

Since $v^1_{c_j}$ and $v^2_{c_j}$ are both in $S_3$, using Claim~\ref{c-p23-1} we get that they are both blue. For each neighbour of $v^1_{c_j}$ there is a neighbour of $v^2_{c_j}$ in the same clique and vice versa. Hence, they have the same number of blue neighbours.
\end{claimproof}

\noindent
We continue as follows. By Claim~\ref{c-p23-1}, we may assume without loss of generality that $S_3$ is coloured blue. We set every variable $x_i\in X$, for which $V_{x_i}$ has been coloured red, to true. We set all other variables to false. By Claim~\ref{c-p23-4}, we know that for each clause $c_j \in C$, $j\in\{1,\ldots,|C|\}$, there is exactly one red neighbour of $v^1_{c_j}$ (resp. $v^2_{c_j}$). Hence, in every clause, exactly one literal is set to true. Since by Claim~\ref{c-p23-1}, every~$V_{x_i}$, $i\in \{1,\ldots,|X|\}$, is monochromatic, it follows that no variable is both true and false. Thus, $\mathcal{I}$ admits a truth assignment such that each clause contains exactly one true literal.

\medskip
\noindent
Conversely, assume now that $\mathcal{I}$ admits a truth assignment such that each clause contains exactly one true literal. For every variable $x_i\in X$, $i\in\{1,\ldots,|X|\}$, that is set to true, we colour $V_{x_i}$ red and $U_{x_i}$ blue; for every other variable $x_i\in X$, we colour $V_{x_i}$ blue and $U_{x_i}$ red. It follows that every vertex $v^k_{c_j}$, $c_j\in C$ and $j\in\{1,\ldots,|C|\}, k=1,2$, has exactly one red and two blue neighbours outside of $S_3$, since the truth assignment is such that each clause contains exactly one true literal. Thus, we colour $S_3$ blue.  If we consider a clause $c_j = (x_{i_1} \vee x_{i_2} \vee x_{i_3})$, we know that exactly one of $U_{x_{i_1}}, U_{x_{i_2}}, U_{x_{i_3}}$ is blue and the other ones are red.  Assume, without loss of generality, that $U_{x_{i_1}}$ is blue and consider $a^1_{c_{j}}, a^4_{c_{j}}$, the neighbours of  $u^1_{x_{i_1}}\in U_{x_{i_1}}$ and $a^7_{c_{j}}, a^{10}_{c_{j}}$, the neighbours of  $u^4_{x_{i_1}}\in U_{x_{i_1}}$.  We then colour $a^1_{c_{j}}, a^4_{c_{j}},a^7_{c_{j}}, a^{10}_{c_{j}}$ blue, since they have two blue neighbours ($\{u^1_{x_{i_1}},u^1_{c_j}\}$, $\{u^1_{x_{i_1}},u^2_{c_j}\}$, $\{u^4_{x_{i_1}},u^3_{c_j}\}$ resp. $\{u^4_{x_{i_1}},u^4_{c_j}\}$). 
To obtain a perfect-extendable red-blue colouring of $G_{\mathcal{I}}$, we have to colour one of the vertices $a^2_{c_{j}}, a^5_{c_{j}}$ blue and the other one red, and similarly, one of the vertices $a^3_{c_{j}}, a^6_{c_{j}}$ blue and the other one red. Since $u^1_{c_j},u^2_{c_j}$ are both coloured blue and each of them can have at most one red neighbour, we colour $a^2_{c_{j}}, a^6_{c_{j}}$ blue and $a^3_{c_{j}}, a^5_{c_{j}}$ red. With the same arguments we colour $a^8_{c_{j}}, a^{12}_{c_{j}}$ blue and $a^9_{c_{j}}, a^{11}_{c_{j}}$ red.  Finally, the only vertices that remain uncoloured are the vertices in $S_1$ and $S_2$. Here, the only restriction is that $S_1$ and $S_2$ are coloured differently, hence we colour for instance $S_1$ blue and $S_2$ red.  This clearly gives us a valid red-blue-colouring of $G_{\mathcal{I}}$.

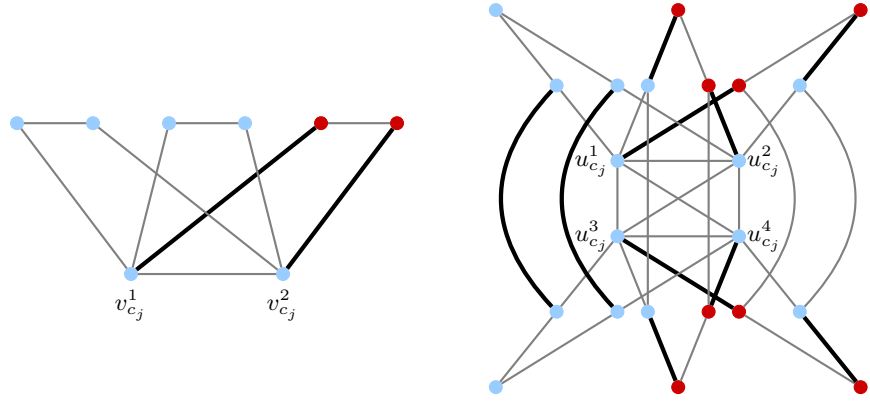
\begin{figure}[ht]
\centering
\begin{tikzpicture}
\small

\node[bvertex, label=below:$v_{c_j}^1$](v1) at (0,0){};
\node[bvertex, label=below:$v_{c_j}^2$](v2) at (2,0){};

\node[bvertex, ](vx1) at (-1.5,2){};
\node[bvertex,](vx2) at (-0.5,2){};
\node[bvertex,](vx3) at (0.5,2){};
\node[bvertex](vx4) at (1.5,2){};
\node[rvertex](vx5) at (2.5,2){};
\node[rvertex](vx6) at (3.5,2){};

\draw[hedge](v1)--(v2);
\draw[hedge](v1)--(vx1);
\draw[hedge](v1)--(vx3);
\draw[medge](v1)--(vx5);
\draw[hedge](v2)--(vx2);
\draw[hedge](v2)--(vx4);
\draw[medge](v2)--(vx6);
\draw[hedge](vx1)--(vx2);
\draw[hedge](vx3)--(vx4);
\draw[hedge](vx5)--(vx6);

\begin{scope}[yscale = 0.5,xscale=0.8, shift = {(8,3)}]

\node[bvertex, label={[label distance=-0.1cm]left:$u_{c_j}^1$}](uc1) at (0,0){};
\node[bvertex, label={[label distance=-0.1cm]right:$u_{c_j}^2$}](uc2) at (2,0){};
\node[bvertex, label={[label distance=-0.1cm]left:$u_{c_j}^3$}](uc3) at (0,-2){};
\node[bvertex, label={[label distance=-0.1cm]right:$u_{c_j}^4$}](uc4) at (2,-2){};

\node[bvertex](ac1) at (-1,2){};
\node[bvertex](ac4) at (0,2){};
\node[rvertex](ac3) at (2,2){};
\node[bvertex](ac6) at (3,2){};
\node[bvertex](ac2) at (0.5,2){};
\node[rvertex](ac5) at (1.5,2){};

\node[bvertex](ac7) at (-1,-4){};
\node[bvertex](ac10) at (0,-4){};
\node[rvertex](ac9) at (2,-4){};
\node[bvertex](ac12) at (3,-4){};
\node[bvertex](ac8) at (0.5,-4){};
\node[rvertex](ac11) at (1.5,-4){};

\node[bvertex](u1) at (-2,4){};
\node[rvertex](u3) at (4,4){};
\node[rvertex](u2) at (1,4){};

\node[bvertex](u4) at (-2,-6){};
\node[rvertex](u6) at (4,-6){};
\node[rvertex](u5) at (1,-6){};

\draw[hedge](uc1)--(uc2);
\draw[hedge](uc1)--(uc3);
\draw[hedge](uc1)--(uc4);
\draw[hedge](uc2)--(uc3);
\draw[hedge](uc2)--(uc4);
\draw[hedge](uc3)--(uc4);

\draw[hedge](uc1)--(ac1);
\draw[medge](uc1)--(ac3);
\draw[hedge](uc1)--(ac2);
\draw[medge](uc2)--(ac5);
\draw[hedge](uc2)--(ac4);
\draw[hedge](uc2)--(ac6);

\draw[hedge](ac1)--(u1);
\draw[hedge](ac4)--(u1);
\draw[medge](ac2)--(u2);
\draw[hedge](ac5)--(u2);
\draw[hedge](ac3)--(u3);
\draw[medge](ac6)--(u3);

\draw[hedge](uc3)--(ac7);
\draw[medge](uc3)--(ac9);
\draw[hedge](uc3)--(ac8);
\draw[medge](uc4)--(ac11);
\draw[hedge](uc4)--(ac10);
\draw[hedge](uc4)--(ac12);

\draw[hedge](ac7)--(u4);
\draw[hedge](ac10)--(u4);
\draw[medge](ac8)--(u5);
\draw[hedge](ac11)--(u5);
\draw[hedge](ac9)--(u6);
\draw[medge](ac12)--(u6);

\draw[medge, ](ac1) to [bend right = 30](ac7);
\draw[hedge](ac2) to [bend right = 0](ac8);
\draw[hedge](ac3) to [bend left = 30](ac9);
\draw[medge](ac4) to [bend right = 30](ac10);
\draw[hedge](ac5) to [bend left = 0](ac11);
\draw[hedge](ac6) to [bend left = 30](ac12);

\end{scope}

\end{tikzpicture}
\caption{\label{fig-p23-perfect}A perfect-extendable red-blue-colouring of the clause gadget. Notice that vertices $v_{c_j}^1,v_{c_j}^2,u^1_{c_j},\ldots,u^4_{c_j}$ belong to the clique $S_3$, but for the sake of readability, we omitted the edges between vertices $v_{c_j}^1,v_{c_j}^2$ and vertices $u^1_{c_j},\ldots,u^4_{c_j}$ in this figure.}
\end{figure}

It still remains to verify that this valid red-blue-colouring is perfect-extendable. First, consider the cliques $S_1$ and $S_2$. We know that each vertex $s_j^i$, for $j = 1,2, i \in \set{1,\dots, |X|}$ has two neighbours $v_{x_i}^s$ and $u_{x_i}^s$ outside of $S_1\cup S_2$. Since the red-blue-colouring is valid, it follows that exactly one of $s_1^i$, $s_2^i$ and exactly one of $v_{x_i}^s$ and $u_{x_i}^s$ is blue. So every vertex in $S_1$ and $S_2$ has a neighbour of the opposite colour and hence, can be matched with it. The same holds for the vertices $v_{x_i}^s,u_{x_i}^s$, for all $x_i\in X$. 

Next, consider a clause $c_j\in C$, such that $c_j=x_{i_1}\vee x_{i_2}\vee x_{i_3}$. It follows from the construction of our valid red-blue-colouring that we may assume, without loss of generality, that $V_{x_{i_1}}, U_{x_{i_2}}, U_{x_{i_3}}$ are coloured red and $V_{x_{i_2}}, V_{x_{i_3}}, U_{x_{i_1}}$ are coloured blue. Since every clause vertex is coloured blue and has exactly one red neighbour, it follows that all clause vertices in the clause gadget of $c_j$ can be matched and the same holds for the vertices in $V_{x_{i_1}}$. The auxiliary vertices which are adjacent to vertices in $U_{x_{i_2}}\cup U_{x_{i_3}}$ have exactly one neighbour of the opposite colour and thus, they can be matched with either clause vertices or vertices in $U_{x_{i_2}}\cup U_{x_{i_3}}$ (see Figure \ref{fig-p23-perfect}). The auxiliary vertices $a^1_{c_{j}}, a^4_{c_{j}},a^7_{c_{j}}$ and $a^{10}_{c_{j}}$, which are neighbours of $U_{x_{i_1}}$, are all coloured blue. In this case, we consider the edges $a^1_{c_{j}} a^7_{c_{j}}$ and $a^4_{c_{j}} a^{10}_{c_{j}}$ as matching edges. 

The only vertices that remain unmatched are vertices in variable gadgets of variables $x_i \in X, i \in \set{1,\dots, |X|}$, which are set to false. Since all vertices $v_{x_i}^s$ and $v_{u_i}^s$ are already matched, for all $x_i \in X, i \in \set{1,\dots, |X|}$, there remains an even number of unmatched vertices in each clique of these variable gadgets, all coloured with the same colour. Hence, we can easily find matching edges inside these cliques. We conclude that our valid red-blue-colouring is indeed perfect-extendable.

\medskip
\noindent
To complete the proof, it remains to show that $G_{\mathcal{I}}$ is indeed $(4P_7,P_{23})$-free. To show that it is $P_{23}$-free, we follow the same arguments as in the proof of Theorem \ref{t-p23}, when we showed that the corresponding graph was $P_{19}$-free. Let $P$ be a longest induced path in $G_{\mathcal{I}}$.  $P$ can contain at most two vertices from each clique, since otherwise it would not be induced. Also, if $P$ contains two vertices from a same clique, then these vertices are necessarily consecutive in $P$. Let $W_1,W_2 \in \set{U_{x_i},V_{x_i} |\ x_i \in X}$. Every path from a vertex in $W_1$ to a vertex in $W_2$ contains at least one vertex from one of the cliques $S_1,S_2,S_3$. Hence, $P$ can intersect at most 4 cliques belonging to some variable gadgets; further, if it does intersect four cliques belonging to some variable gadgets, then it intersects each of the cliques $S_1,S_2,S_3$ as well. Notice that after respectively before intersecting $S_3$, $P$ may contain at most two auxiliary vertices (see Figure~\ref{fig-p23-path}). Also, the first two, respectively the last two, vertices in $P$ may correspond to auxiliary vertices. So $P$ contains at most 22 vertices, and thus, $G_{\mathcal{I}}$ is indeed $P_{23}$-free.

\begin{figure}[ht]
\centering
\begin{tikzpicture}
\small
\begin{scope}[yscale = 0.5, xscale = 0.8]

\node[vertex, label={[label distance=-0.1cm]left:$u_{c_j}^1$}](uc1) at (0,0){};
\node[vertex, label={[label distance=-0.1cm]right:$u_{c_j}^2$}](uc2) at (2,0){};
\node[vertex, label={[label distance=-0.1cm]left:$u_{c_j}^3$}](uc3) at (0,-2){};
\node[vertex, label={[label distance=-0.1cm]right:$u_{c_j}^4$}](uc4) at (2,-2){};

\node[vertex](ac1) at (-1,2){};
\node[vertex](ac4) at (0,2){};
\node[vertex](ac3) at (2,2){};
\node[vertex](ac6) at (3,2){};
\node[vertex](ac2) at (0.5,2){};
\node[vertex](ac5) at (1.5,2){};

\node[vertex](ac7) at (-1,-4){};
\node[vertex](ac10) at (0,-4){};
\node[vertex](ac9) at (2,-4){};
\node[vertex](ac12) at (3,-4){};
\node[vertex](ac8) at (0.5,-4){};
\node[vertex](ac11) at (1.5,-4){};

\node[vertex](u1) at (-2,4){};
\node[vertex](u3) at (4,4){};
\node[vertex](u2) at (1,4){};

\node[vertex](u4) at (-2,-6){};
\node[vertex](u6) at (4,-6){};
\node[vertex](u5) at (1,-6){};

\draw[hedge](uc1)--(uc2);
\draw[hedge](uc1)--(uc3);
\draw[hedge](uc1)--(uc4);
\draw[hedge](uc2)--(uc3);
\draw[hedge](uc2)--(uc4);
\draw[gedge](uc3)--(uc4);

\draw[hedge](uc1)--(ac1);
\draw[hedge](uc1)--(ac3);
\draw[hedge](uc1)--(ac2);
\draw[hedge](uc2)--(ac5);
\draw[hedge](uc2)--(ac4);
\draw[hedge](uc2)--(ac6);

\draw[gedge](ac1)--(u1);
\draw[hedge](ac4)--(u1);
\draw[hedge](ac2)--(u2);
\draw[hedge](ac5)--(u2);
\draw[hedge](ac3)--(u3);
\draw[gedge](ac6)--(u3);

\draw[gedge](uc3)--(ac7);
\draw[hedge](uc3)--(ac9);
\draw[hedge](uc3)--(ac8);
\draw[hedge](uc4)--(ac11);
\draw[hedge](uc4)--(ac10);
\draw[gedge](uc4)--(ac12);

\draw[hedge](ac7)--(u4);
\draw[hedge](ac10)--(u4);
\draw[hedge](ac8)--(u5);
\draw[hedge](ac11)--(u5);
\draw[hedge](ac9)--(u6);
\draw[hedge](ac12)--(u6);

\draw[gedge, ](ac1) to [bend right = 30](ac7);
\draw[hedge](ac2) to [bend right = 0](ac8);
\draw[hedge](ac3) to [bend left = 30](ac9);
\draw[hedge](ac4) to [bend right = 30](ac10);
\draw[hedge](ac5) to [bend left = 0](ac11);
\draw[gedge](ac6) to [bend left = 30](ac12);
\end{scope}

\end{tikzpicture}
\caption{\label{fig-p23-path}A path intersecting a clause gadget and containing four auxiliary vertices.}
\end{figure}
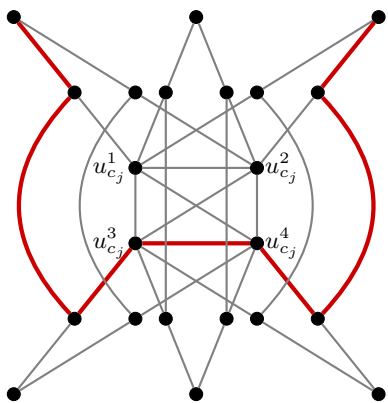

Consider now the graph $G' = G [ V(G_\mathcal{I})\setminus (V(S_1) \cup V(S_2) \cup V(S_3))]$. This graph consists of two types of 
connected components: (i) every $V_{x_i}$, for $x_i \in X$, corresponds to a clique of size~$7$; (ii)~every $U_{x_i}$, for $x_i\in X$, together with some auxiliary vertices corresponds to a connected component as shown in Figure~\ref{fig-p23-comps}. Clearly, any induced path contains at most two vertices of a same clique. Furthermore, it is easy to see that any induced path contains at most six vertices from a connected component containing a clique $U_{x_i}$ (see Figure~\ref{fig-p23-comps}). 
Hence, every induced path with length at least~$6$ has to contain a vertex in one of the cliques $S_1,S_2,S_3$. It immediately follows that $G_\mathcal{I}$ is $4P_7$-free.
\end{proof}

\begin{figure}[ht]
\centering
\begin{tikzpicture}
\begin{scope}[scale=1, rotate = -39]
\small
	\begin{scope}[shift={(0,0)}]
	\node[vertex, ](v1) at ( 360/7:1){};
	\node[vertex, ](v2) at ( 2*360/7:1){};
	\node[vertex](v3) at ( 3*360/7:1){};
	\node[vertex](v4) at ( 4*360/7:1){};
	\node[vertex](v5) at ( 5*360/7:1){};
	\node[vertex](v6) at ( 6*360/7:1){};
	\node[vertex](v7) at ( 0:1){};
	\foreach \i in {1,...,7}{
			\foreach \j in {1,...,\i}{
			\draw[hedge](v\i)--(v\j);
			}
		}
		\end{scope}
		\node[vertex, ](u1) at ( 32.5:1.75){};
		\node[vertex, ](u2) at ( 17.5:1.75){};
		\node[vertex, ](w1) at ( 35:2.5){};
		\node[vertex, ](w2) at ( 15:2.5){};
		
		\node[vertex, ](u3) at ( 136.5:1.75){};
		\node[vertex, ](u4) at ( 121.5:1.75){};
		\node[vertex, ](w3) at ( 139:2.5){};
		\node[vertex, ](w4) at ( 119:2.5){};		
		
		\node[vertex, ](u5) at ( 240.5:1.75){};
		\node[vertex, ](u6) at ( 225.5:1.75){};
		\node[vertex, ](w5) at ( 243:2.5){};
		\node[vertex, ](w6) at ( 223:2.5){};		
		
		\draw[gedge](v7)--(v4);
		
	\draw[hedge](v1)--(u1);	
	\draw[gedge](v7)--(u2);	
	\draw[gedge](u1)--(u2);		
	\draw[hedge](v1)--(w1);	
	\draw[hedge](v7)--(w2);	
	\draw[hedge](w1)--(w2);
	
	\draw[hedge](v3)--(u3);	
	\draw[hedge](v2)--(u4);	
	\draw[hedge](u3)--(u4);		
	\draw[hedge](v3)--(w3);	
	\draw[hedge](v2)--(w4);	
	\draw[hedge](w3)--(w4);
			
	\draw[hedge](v5)--(u5);	
	\draw[hedge](v4)--(u6);	
	\draw[hedge](u5)--(u6);		
	\draw[hedge](v5)--(w5);	
	\draw[gedge](v4)--(w6);	
	\draw[gedge](w5)--(w6);	
		
\end{scope}

\end{tikzpicture}
\caption{\label{fig-p23-comps}A connected component of type (ii) obtained from the removal of $S_1$, $S_2$ and $S_3$. A longest path in this connected component is shown in red.}
\end{figure}
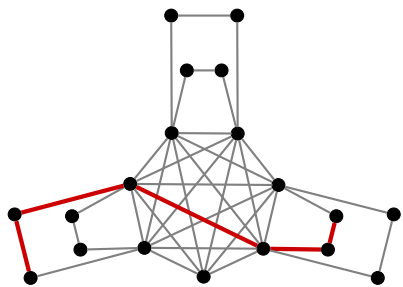

\section{Our Polynomial Results}\label{s-main3}

We start with two lemmas, whose proofs are similar to the proofs for valid but not necessarily perfect red-blue colourings;  see, for example,~\cite{Fe23} or~\cite{LPR} for explicit proofs.

\begin{lemma}\label{l-dom}
For every integer $g$, it is possible to find in $O(2^gn^{g+2})$ time a perfect red-blue colouring (if it exists) of a graph with $n$ vertices and with domination number~$g$.
\end{lemma}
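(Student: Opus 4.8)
The plan is to brute-force over all candidate images of the domination structure. Let $G$ be a connected graph on $n$ vertices with domination number $g$, and suppose a perfect red-blue colouring exists. I would first enumerate all $g$-subsets $D\subseteq V(G)$; there are $O(n^g)$ of them, and in $O(n^{g+1})$ total time we can identify one that is actually a dominating set (for each candidate, checking domination costs $O(n^2)$, say, but we only need one). Fix such a dominating set $D=\{d_1,\dots,d_g\}$. In any perfect red-blue colouring, each $d_i$ receives one of two colours, so there are at most $2^g$ colourings of $D$ to try; iterate over all of them.

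The key observation is that once the colours of $D$ are fixed, the colour of every other vertex is essentially forced. In a perfect red-blue colouring, every blue vertex has exactly one red neighbour and every red vertex has exactly one blue neighbour (since the matching cut is perfect). Take any vertex $v\notin D$. Since $D$ dominates $G$, $v$ has a neighbour $d_i\in D$. If $v$ has two neighbours in $D$ of different colours, then $v$ is matched to both — impossible in a perfect colouring — so all neighbours of $v$ in $D$ have the same colour $c$. Now $v$ is either coloured $c$ (and then $d_i$ is not its unique opposite-coloured neighbour, forcing all of $v$'s $D$-neighbours to be the same colour as $v$, consistent) or $v$ is coloured $\bar c$ (and then $v$ has exactly one neighbour in $D$, which is its match). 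So for each $v\notin D$ with a monochromatic $D$-neighbourhood of colour $c$, there are at most two possibilities, but the count of $D$-neighbours of $v$ disambiguates: if $|N(v)\cap D|\ge 2$, then $v$ must be coloured $c$; if $|N(v)\cap D|=1$, both colours remain open a priori. This last case is the main obstacle and must be handled by propagation: fix the colours of all vertices whose colour is determined, then repeatedly propagate the matching constraints (a red vertex already having a blue neighbour forces all its other neighbours red, and symmetrically) until no further deductions are possible. After propagation, either a contradiction arises (reject this branch) or every vertex is coloured, in which case we check validity in $O(n^2)$ time.

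For the running-time bound, the outer loop over dominating sets contributes $O(n^g)$, the loop over colourings of $D$ contributes $O(2^g)$, and for each of these $O(2^g n^g)$ branches the propagation and verification cost is $O(n^2)$; this gives $O(2^g n^{g+2})$, matching the claimed bound. I would argue correctness in two directions: if the algorithm outputs a colouring, it is perfect by the final verification step; conversely, if a perfect red-blue colouring $\varphi$ exists, then restricting $\varphi$ to the (correctly guessed) dominating set $D$ is one of the $2^g$ enumerated colourings, and the propagation argument shows the algorithm's deductions are all consequences of $\varphi$, so no branch is wrongly rejected and the algorithm reconstructs a valid perfect colouring.

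The step I expect to require the most care is verifying that propagation is both sound (never forces a colour that contradicts some perfect colouring extending the fixed partial colouring) and complete enough to terminate with a fully-coloured graph whenever a valid extension exists. The reason completeness holds is connectivity of $G$ together with the fact that every vertex is dominated: every vertex either lies in $D$, or is adjacent to $D$ with its colour forced directly, or — in the ambiguous single-$D$-neighbour case — must eventually be reached by a chain of matching-constraint propagations from a forced vertex. A clean way to see this is to note that in a perfect red-blue colouring the red set $R$ and blue set $B$ each induce a graph with a perfect matching given by the unmatched-within-side edges, while the cut edges form the perfect matching between $R'$ and $B'$; tracing these structural constraints from $D$ outward, using that $G[V\setminus D]$ has bounded component interaction with $D$, yields the forcing. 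This mirrors the arguments in~\cite{Fe23} and~\cite{LPR} for ordinary (non-perfect) red-blue colourings, which is why the proof can be stated briefly.
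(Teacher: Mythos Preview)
Your proposal has a genuine gap: the propagation step is not complete, and the algorithm as described can fail on small instances.

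First, a telling error: you claim that if $v\notin D$ has two $D$-neighbours of different colours, then ``$v$ is matched to both --- impossible''. This is false. If $v$ has a red $D$-neighbour $d_1$ and a blue $D$-neighbour $d_2$, then $v$ may be red (matched to $d_2$) or blue (matched to $d_1$); both are perfectly consistent. The deduction ``all $D$-neighbours of $v$ have the same colour'' is therefore unwarranted.

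Second, and more seriously, your propagation need not terminate with a full colouring. Take $G=C_4$ on $v_1v_2v_3v_4$ with dominating set $D=\{v_1,v_3\}$. Colour $v_1$ red and $v_3$ blue. Each of $v_2,v_4$ has one red and one blue $D$-neighbour, and neither $v_1$ nor $v_3$ yet sees a neighbour of the opposite colour, so your rule ``a red vertex already having a blue neighbour forces all its other neighbours red'' never fires. Propagation stalls with $v_2,v_4$ uncoloured, yet the perfect extension $\{v_1,v_4\}$ red, $\{v_2,v_3\}$ blue exists. Your text and your running-time analysis disagree on whether you fix one dominating set or loop over all of them; but even looping does not help. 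In the cube $Q_3$ every minimum dominating set is an antipodal pair $\{v,\bar v\}$, every other vertex has exactly one $D$-neighbour, and for every $2$-colouring of $D$ propagation stalls immediately --- yet $Q_3$ has a perfect matching cut (any dimension cut).

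The paper's argument avoids this by guessing more. After fixing \emph{one} dominating set $D$ and a $2$-colouring of it, the paper additionally branches, for each $d\in D$ that has no opposite-coloured neighbour in $D$, over the $O(n)$ choices for which neighbour of $d$ is its match (colouring all other neighbours of $d$ the same as $d$). This extra branching is the true source of the $n^g$ factor in the bound, not enumeration of dominating sets. After these guesses, every $v\notin D$ is adjacent to some $d\in D$ whose match is now fixed, so $v$'s colour is determined outright; no propagation is needed, and one simply verifies the resulting full colouring in $O(n^2)$ time.
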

\begin{proof}
Let $g\geq 1$ be an integer, and let $G$ be a graph with domination number at most $g$. Hence, $G$ has a dominating set $D$ of size at most $g$. We consider all options of colouring the vertices of $D$ red or blue; note that this number is $2^{|D|}\leq 2^g$. For every red vertex of $D$ with no blue neighbour, we consider all $O(n)$ options of colouring exactly one of its neighbours blue (and thus, all of its other neighbours will be coloured red).
Similarly, for every blue vertex of $D$ with no red neighbour, we consider all $O(n)$ options of colouring exactly one of its neighbours red (and thus, all of its other neighbours will be coloured blue). Finally, for every red vertex in $D$ with already one blue neighbour in $D$, we colour all its yet uncoloured neighbours red. 
Similarly, for every blue vertex in $D$ with already one red neighbour in $D$, we colour all its yet uncoloured neighbours blue.

As $D$ is a dominating set, the above means that we guessed a red-blue colouring of the whole graph $G$. We can check in $O(n^2)$ time if a red-blue colouring of a graph with $n$ vertices is perfect. Moreover, the total number of red-blue colourings that we must consider is $O(2^gn^g)$.
\end{proof}

\begin{lemma}\label{l-dom2}
Let $D$ be a dominating set of a connected graph $G$. It is possible to check in polynomial time if $G$ has a perfect red-blue colouring in which $D$ is monochromatic.
\end{lemma}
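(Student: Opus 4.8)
The plan is to observe that, once we fix the colour of $D$, the whole colouring is forced, so that the question reduces to a couple of purely local degree conditions that are trivially checkable.

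By the symmetry between the two colours (swapping red and blue sends perfect red-blue colourings to perfect red-blue colourings) I may assume that $D$ is coloured blue and ask whether a perfect red-blue colouring with this property exists. The crucial step is a forcing argument showing that every vertex of $V\setminus D$ is then coloured red. Suppose not, and pick $v\in V\setminus D$ coloured blue. Since the colouring is perfect, $v$ has a unique red neighbour $w$, and since $D$ is entirely blue we get $w\in V\setminus D$. As $D$ dominates $G$, the vertex $w$ has a neighbour $d\in D$, which is blue; as $w$ is red and the colouring is valid, $d$ is the \emph{only} blue neighbour of $w$, so every neighbour of $w$ other than $d$ is red. But $v$ is a neighbour of $w$ and $v\neq d$ (because $v\notin D$), so $v$ is red, a contradiction.

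It then remains to read off the local conditions. Once $D$ is blue and $V\setminus D$ is red: validity forces both sets to be non-empty; every red vertex $v\in V\setminus D$ has exactly one blue neighbour, and since its neighbours split into (red) vertices of $V\setminus D$ and (blue) vertices of $D$, this is equivalent to $|N(v)\cap D|=1$; symmetrically, every blue vertex $d\in D$ has exactly one red neighbour, which is equivalent to $|N(d)\cap(V\setminus D)|=1$. Conversely, if $\emptyset\neq D\neq V$, every vertex of $V\setminus D$ has exactly one neighbour in $D$, and every vertex of $D$ has exactly one neighbour in $V\setminus D$, then colouring $D$ blue and $V\setminus D$ red is immediately seen to be a valid red-blue colouring with $R'=R$ and $B'=B$, hence a perfect one, in which $D$ is monochromatic. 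Therefore $G$ has a perfect red-blue colouring with $D$ monochromatic if and only if these three conditions hold, and they can be verified in linear time. In the positive case the perfect matching cut is unique: it is exactly the set of edges between $D$ and $V\setminus D$, which by the two degree conditions is a perfect matching of $G$.

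I do not foresee a genuine obstacle: the only substantive step is the forcing lemma of the second paragraph, and the one point that needs a moment's care is the converse implication, namely checking that the forced colouring is not merely valid but actually \emph{perfect}, which is immediate from the degree conditions. This argument is close in spirit to, but simpler than, the propagation arguments used for ordinary (not necessarily perfect) matching cuts with a monochromatic dominating set, since in the perfect setting a monochromatic $D$ leaves no freedom at all for the remaining vertices.
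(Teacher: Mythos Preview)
Your proof is correct and follows essentially the same approach as the paper: both show that once $D$ is monochromatic, every vertex of $V\setminus D$ is forced to take the opposite colour, and then reduce the problem to verifying that this unique candidate colouring is perfect. Your forcing argument is a one-step contradiction where the paper uses a two-step version (first showing components of $G-D$ are monochromatic, then that they must carry the opposite colour), but the content is the same.
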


\begin{proof}
Consider a perfect red-blue colouring~$c$ of $G$, in which $D$ is monochromatic, say every vertex of $D$ is coloured red. Let $F_1,\ldots,F_r$, for some integer $r\geq 1$, be the connected components of $G-D$.

We claim that every $F_i$ is monochromatic. This follows from the same argument as the one for valid red-blue colourings that are not necessarily perfect (see~\cite{LPR}) and we give it for completeness. For a contradiction, assume that, say, $F_1$ is not monochromatic. This means that $F_1$ contains an edge $uv$ where $u$ is coloured red and $v$ is coloured blue. As $D$ is a dominating set of $G$, we have that $v$ also has a neighbour in $D$, which is coloured red. Hence, $c$ is not valid and thus not perfect either, a contradiction. 

Now suppose that some vertex~$u$ in $F_i$ is coloured red. By the above claim, every vertex of $F_i$ is coloured red. The neighbours of $u$ outside $F_i$ are all in $D$ and thus, are coloured red as well. Hence, $u$ has no blue neighbour, meaning that $c$ is not perfect, a contradiction. We conclude that every vertex in $G-D$ must be coloured blue. 

Hence, in order to check if $G$ has a perfect red-blue colouring in which $D$ is monochromatic, we can do as follows. Colour every vertex in $D$ red and colour every vertex in $G-D$ blue. Then, check in polynomial time if the resulting red-blue colouring is perfect.
\end{proof}

\noindent
To handle ``partial'' perfect red-blue colourings, we introduce the following terminology.
Let $G=(V,E)$ be a connected graph and $S,T,X,Y\subseteq V$ be four non-empty sets with $S\subseteq X$, $T\subseteq Y$ and $X\cap Y=\emptyset$. A  {\it red-blue $(S,T,X,Y)$-colouring} of $G$ is a red-blue colouring where
\begin{itemize}
\item every vertex of $X$ is coloured red and every vertex of $Y$ is coloured blue;
\item the blue neighbour of every vertex in $S$ belongs to $T$ and vice versa; and
\item the blue neighbour of every vertex in $X\setminus S$ and the red neighbour of every vertex of $Y$ belong to $V\setminus (X\cup Y)$.\\[-10pt]
\end{itemize}
 For a connected graph $G=(V,E)$, let $S'$ and $T'$ be two disjoint subsets of $V$, such that (i)~every vertex of $S'$ is adjacent to at most one vertex of $T'$, and vice versa, and (ii) at least one vertex in $S'$ is adjacent to a vertex in $T'$.  
Let $S''$ consist of all vertices of $S'$ with a (unique) neighbour in $T'$, and let $T''$ consist of all vertices of $T'$ with a (unique) neighbour in $S'$ (so, every vertex in $S''$ has a unique neighbour in $T''$, and vice versa). We call $(S'',T'')$ the {\it core} of {\it starting pair} $(S',T')$; note that $|S''|=|T''|\geq 1$.

We colour every vertex in $S'$ red and every vertex in $T'$ blue. Propagation rules will try to extend $S'$ and $T'$ by finding new vertices whose colour must always be either red or blue. We place new red vertices in a set $X$ and new blue vertices in a set $Y$. If a red and blue vertex are matched to each other, then we add the red one to a set $S\subseteq X$ and the blue one to a set $T\subseteq Y$.
Initially, $S:=S''$, $T:=T''$, $X:=S'$ and $Y:=T'$, and we let $Z:=V\setminus (X\cup Y)$.

We now present seven propagation rules for finding perfect red-blue $(S,T,X,Y)$-colourings.
Rules R1 and R2 hold for finding red-blue colourings in general and correspond to the five rules from~\cite{LL19}. Rules R3-R7 are for finding perfect red-blue colourings; some of them are in a slightly different form in~\cite{LT21}.  
\begin{enumerate}
\item [\bf R1.] Return {\tt no} (i.e., $G$ has no red-blue $(S,T,X,Y)$-colouring) if a vertex $v\in Z$ is
\begin{itemize}
\item [(i)]   adjacent to a vertex in $S$ and to a vertex in $T$, or
\item [(ii)]  adjacent to a vertex in $S$ and to two vertices in $Y\setminus T$, or
\item [(iii)]   adjacent to a vertex in $T$ and to two vertices in $X\setminus S$, or
\item  [(iv)] adjacent to two vertices in $X\setminus S$ and to two vertices in $Y\setminus T$.\vspace*{-3mm}
\end{itemize}
\end{enumerate}

\begin{enumerate}
\item [{\bf R2.}]  Let $v\in Z$.
\begin{itemize}
\item [(i)] If $v$ is adjacent to a vertex in $S$ or to two vertices of $X\setminus S$, then move $v$ from $Z$ to $X$. If moreover $v$ is also adjacent to a vertex $w$ in $Y$, then add $v$ to $S$ and $w$ to $T$.
\item [(ii)] If $v$ is adjacent to a vertex in $T$ or to two vertices of $Y\setminus T$, then move $v$ from $Z$ to $Y$. If moreover $v$ is also adjacent to a vertex $w$ in $X$, then add $v$ to $T$ and $w$~to~$S$. \vspace*{-3mm}
\end{itemize}
\end{enumerate}

\begin{enumerate}
\item [{\bf R3.}] Let $v\in (X\cup Y)\setminus (S\cup T)$.
\begin{itemize}
\item [(i)] If $v\in X\setminus S$ and $v$ is adjacent to a vertex~$w$ in $Y$, then add $v$ to $S$ and $w$ to~$T$.
\item [(ii)] If $v\in Y\setminus T$ and $v$ is adjacent to a vertex~$w$ in $X$, then add $v$ to $T$ and $w$ to~$S$. \vspace*{-3mm}
\end{itemize}
\end{enumerate}

\begin{enumerate}
\item [{\bf R4.}] Return {\tt no} if
\begin{itemize}
\item [(i)] a vertex $x\in X$ has no neighbours outside $X$ or is adjacent to two vertices of $Y$, or
\item [(ii)] a vertex $y\in Y$ has no neighbours outside $Y$, or is adjacent to two vertices of $X$. \vspace*{-3mm}
\end{itemize}
\end{enumerate}

\begin{enumerate}
\item [{\bf R5.}]  Let $v\in Z$. Let $w \in Z$ be a vertex with $N_G(w)=\{v\}$.
\begin{itemize}
\item [(i)] If $v$ is adjacent to a vertex in $X$ and to a vertex in $Y$, then return {\tt no}.
\item [(ii)] If $v$ is adjacent to a vertex in $X$ but not to a vertex in $Y$, then put $v$ in $X$ and $w$ in~$Y$, and also add $v$ to $S$ and $w$ to $T$.
\item [(iii)] If $v$ is adjacent to a vertex in $Y$ but not to a vertex in $X$, then put $v$ in $Y$ and $w$ in~$X$, and also add $v$ to $T$ and $w$ to $S$. \vspace*{-3mm}
\end{itemize}
\end{enumerate}

\begin{enumerate}
\item [{\bf R6.}]  Let $v\in Z$ be in a connected component $F$ of $G[Z]$ such that $F$ is isomorphic to $C_4$.
\begin{itemize}
\item [(i)] If $v$ is adjacent to a vertex in $X$ but not to a vertex in $Y$, and $F$ contains a vertex not adjacent to a vertex in $X$,
then move $v$ from $Z$ to~$X$.
\item [(ii)] If $v$ is adjacent to a vertex in $Y$ but not to a vertex in $X$, and $F$ contains a vertex not adjacent to a vertex in $Y$,
then move $v$ from $Z$ to~$Y$. \vspace*{-3mm}
\end{itemize}
\end{enumerate}

\begin{enumerate}
\item [{\bf R7.}]  Let $v\in Z$ be in a connected component $F$ of $G[Z]$ such that $\{v\}$ dominates $F$. Let $F-v$ have a vertex~$w$ with only one neighbour $w'$ in $X\cup Y$.
\begin{itemize}
\item [(i)] If $w'\in X$, then put $v$ in $Y$.
\item [(ii)] If $w'\in Y$, then put $v$ in $X$. 
\end{itemize}
\end{enumerate}

\noindent
A propagation rule is {\it safe} if the input graph has a perfect red-blue $(S,T,X,Y)$-colouring before the application of the rule if and only if it has so after the application of the rule. 

\begin{lemma}\label{l-safe}
Rules R1--R7 are safe.
\end{lemma}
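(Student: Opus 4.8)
The plan is to check each of the seven rules separately, in each case verifying the two directions of safeness: if $G$ has a perfect red-blue $(S,T,X,Y)$-colouring before the rule is applied, then it has one afterwards, and conversely. The ``backwards'' direction (after $\Rightarrow$ before) is trivial for the rules that only enlarge $X$, $Y$, $S$, $T$ (namely R2, R3, R5(ii,iii), R6, R7): a colouring consistent with the larger constraint sets is a fortiori consistent with the smaller ones, since the constraints are monotone. For the rules that return {\tt no} (R1, R4, R5(i)), the backwards direction is vacuous. So the real work is the ``forwards'' direction: showing that any valid perfect colouring must already respect the new assignment the rule makes, i.e.\ that the guessed colour of the moved vertex is forced, and (for the {\tt no}-rules) that no valid perfect colouring can exist.

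For the {\tt no}-rules, the argument is a short case analysis using the definition of a red-blue $(S,T,X,Y)$-colouring together with the defining properties of a \emph{perfect} red-blue colouring (every vertex is matched to exactly one vertex of the opposite colour). For R1, a vertex $v\in Z$ adjacent to both an $S$-vertex and a $T$-vertex would, whatever colour it gets, see a matched neighbour of the wrong colour, violating validity; the cases (ii)--(iv) are similar, using that a vertex in $X\setminus S$ is red and still needs its blue partner outside $X\cup Y$, etc. For R4, if $x\in X$ has no neighbour outside $X$ then $x$ (being red) has no blue neighbour at all, contradicting perfectness; if $x$ is adjacent to two vertices of $Y$ it has two blue neighbours, again impossible. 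R5(i) is the degree-one analogue: the pendant vertex $w$ must take the opposite colour to $v$, so $v$ cannot also be matched across into $X$ or $Y$.

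For the propagation (colour-forcing) rules the pattern is: assume a perfect colouring exists, suppose for contradiction the moved vertex $v$ had the ``wrong'' colour, and derive a violation of validity or perfectness. R2 and R3 follow exactly the standard matching-cut propagation (a vertex with a neighbour in the monochromatic red block, or with two red neighbours outside a would-be matched pair, must itself be red, and if it also touches a blue vertex the two are forced to be matched). R5(ii,iii) combine this with the pendant neighbour $w$: once $v$ is forced red, $w$ — its only neighbour — must be its blue partner. R6 handles a $C_4$-component $F$ of $G[Z]$: if one vertex of $F$ is forced one way by an outside constraint while another vertex of $F$ has no such outside constraint, a short parity/matching argument on the $4$-cycle shows the forced vertex cannot switch sides without leaving $F$ internally inconsistent. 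R7 is the ``domination'' rule: if $\{v\}$ dominates its component $F$ and some $w\in F-v$ has a unique neighbour $w'$ in $X\cup Y$, then $w$'s colour is dictated by $w'$, and since $w$'s only possible partner of the opposite colour is $v$ (all other neighbours of $w$ lie in $F$ and are dominated by $v$), $v$ must take the colour opposite to $w'$.

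The main obstacle is R6 and R7: unlike R1--R3 and R5, these are genuinely about \emph{perfect} (not merely valid) colourings and exploit the global matching condition rather than a local conflict, so one must argue carefully that the structural hypothesis ($C_4$-component, respectively domination of the component by a single vertex) really does force the stated colour — in particular ruling out the alternative colouring by tracing how the forced matching edges propagate within the component. I expect these two cases to take most of the space in the full proof, while R1--R5 reduce to the by-now routine red-blue-colouring conflict arguments already used for {\sc Matching Cut} in \cite{LL19} and for {\sc Perfect Matching Cut} in \cite{LT21}.
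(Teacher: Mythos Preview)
Your proposal is correct in structure and matches the paper's approach: rule-by-rule verification, with the backward direction trivial (monotonicity of the constraints, vacuity for {\tt no}-rules) and the forward direction handled by short contradiction arguments. Your treatment of R1--R5 is essentially identical to the paper's.

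One correction on R7: the claim ``$w$'s colour is dictated by $w'$'' is not right as stated --- a single neighbour never forces a colour --- and ``$w$'s only possible partner of the opposite colour is $v$'' wrongly excludes $w'$ as a candidate partner. The clean argument (and what the paper does) is the contradiction you set up elsewhere: assume $v$ receives the \emph{same} colour as $w'$, say red. Then $w$ has two red neighbours ($v$ and $w'$), so $w$ is red; every other neighbour of $w$ in $F$ is adjacent to both $w$ and $v$ (by domination), hence has two red neighbours and is red; therefore $w$ has no blue neighbour, contradicting perfectness. Your parenthetical observation ``all other neighbours of $w$ lie in $F$ and are dominated by $v$'' is exactly the structural fact needed, but it feeds this contradiction rather than directly pinning down $w$'s partner. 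For R6 the paper's argument is similarly a direct cascade (wrong colour on $v$ forces the whole $C_4$ monochromatic, then the vertex with no $X$-neighbour is stranded) rather than a parity argument, but your sketch points in the right direction.
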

\begin{proof}
Let $G$ be a connected graph with a perfect red-blue $(S,T,X,Y)$-colouring. First recall that, by definition, vertices in $X$ will be coloured red by every red-blue $(S,T,X,Y)$-colouring, whilst vertices of $Y$ will be coloured blue, and moreover that every (red) vertex in $S$ has exactly one (blue) neighbour in $T$, and vice versa. The colour of the vertices in $Z$ still has to be decided.

Rule R1-(i) is safe. A vertex adjacent to both a red vertex that already has a blue neighbour and to a blue vertex that already has a red neighbour can be coloured neither red nor blue. Rule R1-(ii) is safe, as a vertex that is adjacent to a red vertex that already has a blue neighbour must be coloured red, so it cannot also be adjacent to two blue vertices. For the same reason, R1-(iii) is safe. Finally, R1-(iv) is safe, as a vertex that is adjacent to two red vertices must be coloured red, so it cannot also be adjacent to two blue vertices. 

Rule R2-(i) is safe. Any vertex adjacent to a red vertex that already has a blue neighbour or to two red vertices must be coloured red. If such a vertex is adjacent to a vertex coloured blue already, it will have its blue neighbour and thus must be added to $S$, whilst its blue neighbour must be added to $T$. For the same reason, R2-(ii) is safe as well. 

Rule R3-(i) is safe. Every red vertex must have a (unique) blue neighbour, and vice versa. For the same reason, R3-(ii) is safe. 

Rule R4-(i) is safe. In the first case, $x$ will only have red neighbours in $G$ (as $x$ is coloured red, $x$ needs a blue neighbour as well). In the second case, $x$ will have two blue neighbours, while $x$ is coloured red. This is not possible. For the same reason R4-(ii) is safe as well.

Rule R5-(i) is safe. As $v$ will be adjacent to a blue and red neighbour, $w$ cannot be its matching neighbour. As $w$ has degree~$1$, we find that $w$ cannot be matched. Rule R5-(ii) is safe as well. For a contradiction, suppose that we would put $v$ in $Y$. Then the matched neighbour of $v$ is the neighbour of $v$ that belongs to $x$. Hence, again we find that $w$ does not have a matched neighbour. So we must put $v$ in $X$, and then $v$ and $w$ will be matched to each other. For the same reason, R5-(iii) is safe.

Rule R6-(i) is safe. Suppose that we put $v$ in $Y$, that is, $v$ will be coloured blue. Consequently, $v$ has its matching neighbour in $X$.  This means that the two neighbours of $v$ in $F$ will be coloured blue. As $F$ is a cycle on four vertices, the fourth vertex will get colour blue as well. By assumption, $F$ contains a vertex that is not adjacent to a vertex in $X$. This vertex is coloured blue, but will not have a red neighbour. We conclude that $v$ must be put in $X$.
For the same reason, R6-(ii) is safe as well. 

Rule R7-(i) is safe. Suppose that we put $v$ in $X$, so $v$ will be coloured red, just like $w'$. Hence, $w$ is adjacent to two red vertices, and must be coloured red as well. As $w'$ is the only neighbour of $w$ in $X\cup Y$, we find that every other neighbour of $w$ is in $F$. As $\{v\}$ dominates $F$, this means that such a neighbour of $w$ is adjacent not only to $w$ but also to $v$, and hence must be coloured red. This means that $w$ will not have any blue neighbour. We conclude that $v$ must be put in $Y$. For the same reason, R7-(ii) is safe as well. 
\end{proof}

\noindent
Assume that exhaustively applying rules R1--R7 on a starting pair $(S',T')$ did not lead to a no-answer but to a $4$-tuple $(S,T,X,Y)$. Then we call $(S,T,X,Y)$ an {\it intermediate} $4$-tuple. The first part of the next lemma follows from Lemma~\ref{l-safe}. The second part is straightforward.

\begin{lemma}\label{l-iff}
Let $G$ be a graph with a starting pair $(S',T')$ with core $(S'',T'')$ and a resulting intermediate $4$-tuple $(S,T,X,Y)$. Then $G$ has a perfect red-blue $(S'',T'',S',T')$-colouring if and only if $G$ has a perfect red-blue $(S,T,X,Y)$-colouring. Moreover, $(S,T,X,Y)$ can be obtained in polynomial time.
\end{lemma}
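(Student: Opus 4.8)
The plan is to prove both directions of Lemma~\ref{l-iff} separately, then justify the polynomial running time. For the ``iff'' statement, I would argue by induction on the number of rule applications used to transform the starting $4$-tuple $(S'',T'',S',T')$ into the intermediate $4$-tuple $(S,T,X,Y)$. The base case (zero applications) is trivial since then $(S,T,X,Y)=(S'',T'',S',T')$. For the inductive step, suppose $(S_1,T_1,X_1,Y_1)$ is obtained from $(S'',T'',S',T')$ by $k-1$ rule applications and $(S,T,X,Y)$ is obtained from $(S_1,T_1,X_1,Y_1)$ by one further application of some rule R$i$. By Lemma~\ref{l-safe}, that rule is safe, meaning $G$ has a perfect red-blue $(S_1,T_1,X_1,Y_1)$-colouring if and only if it has a perfect red-blue $(S,T,X,Y)$-colouring; combining this with the induction hypothesis gives the claim. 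Here one must be slightly careful that ``safe'' as defined just before Lemma~\ref{l-safe} is phrased for a single rule application on an arbitrary current $4$-tuple, which is exactly what is needed, so no extra work is required beyond invoking Lemma~\ref{l-safe}.

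Second, I would verify the polynomial-time claim. Each of the rules R1--R7 can be checked for applicability in polynomial time: every rule inspects a vertex $v\in Z$ (or a vertex in $X\cup Y$) together with its neighbourhood, and possibly the connected component of $G[Z]$ containing $v$ (for R5--R7), all of which can be enumerated and tested in $O(n^2)$ or $O(n^3)$ time. Moreover each successful application of a rule either moves at least one vertex out of $Z$ into $X\cup Y$, or moves at least one vertex from $(X\cup Y)\setminus(S\cup T)$ into $S\cup T$ (rules R3 and the ``moreover'' clauses of R2/R5), or returns {\tt no} and halts. Since $|Z|$ is monotonically non-increasing and $|S\cup T|$ is monotonically non-decreasing and both are bounded by $n$, the total number of rule applications before the process halts is $O(n)$. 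Hence the whole exhaustive-application procedure runs in polynomial time, and $(S,T,X,Y)$ is produced in polynomial time as claimed.

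The main obstacle, such as it is, is bookkeeping rather than mathematical depth: I must make sure that every rule indeed has the monotonicity property that guarantees termination, including the rules that only reclassify vertices already in $X\cup Y$ (R3, R4 which halts, and the conditional clauses of R2 and R5), and that ``exhaustive application'' is well-defined, i.e.\ that once no rule applies the process genuinely stops. One subtlety worth a sentence is that applying a rule can create new opportunities for other rules, but this does not affect termination because of the potential-function argument above ($n-|Z|$ plus $|S\cup T|$ strictly increases with each non-terminating application). With these observations in place, the lemma follows immediately by combining the inductive use of Lemma~\ref{l-safe} with the termination bound.
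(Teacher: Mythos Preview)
Your proposal is correct and follows essentially the same approach as the paper: the equivalence is obtained by iterating Lemma~\ref{l-safe} over the sequence of rule applications (the paper simply says ``follows from Lemma~\ref{l-safe} and our initialisation''), and the running-time bound comes from the observation that each rule applies in polynomial time and the number of applications is linear. Your potential-function argument (tracking $n-|Z|+|S\cup T|$) is in fact slightly more careful than the paper's own phrasing, since it correctly accounts for rule~R3, which changes neither $X$, $Y$, nor $Z$ but does enlarge $S\cup T$.
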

\begin{proof}
The first part of the lemma follows from Lemma~\ref{l-safe} and our initialisation. To prove the running time statement, we first note that each application of R1--R7 takes polynomial time. For each rule we can also check in polynomial time if it can be applied. Moreover, after each application of a rule we either find a no-answer or reduce the size of at least one of the sets $X$, $Y$, $Z$. Hence, we obtained $(S,T,X,Y)$ in polynomial time.
\end{proof}

\noindent
We now describe the structure of a graph with a $4$-tuple $(S,T,X,Y)$.

\begin{lemma}\label{l-lll1b}
Let $G$ be a graph with an intermediate $4$-tuple $(S,T,X,Y)$. Then:
\begin{itemize}
\item [(i)] every vertex in $S$ has exactly one neighbour in $Y$, which belongs to $T$;
\item [(ii)] every vertex in $T$ has exactly one neighbour in $X$, which belongs to $S$;
\item [(iii)] every vertex in $X\setminus S$ has no neighbour in $Y$;
\item [(iv)] every vertex in $Y\setminus T$ has no neighbour in $X$;
\item [(v)] every vertex in $V\setminus (X\cup Y)$ has no neighbour in $S\cup T$, at most one neighbour in $X\setminus S$ and at most one neighbour in $Y\setminus T$.
\end{itemize}
\end{lemma}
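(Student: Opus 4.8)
The plan is to verify each of the five items by examining which propagation rules would fire if the stated structural property failed, using the fact that the $4$-tuple $(S,T,X,Y)$ is \emph{intermediate}, i.e.\ none of the rules R1--R7 applies anymore. First I would recall how membership in the sets is established: a vertex enters $X$ or $Y$ only via R2, R3, R5, R6 or R7 (or is in the initial $S'\subseteq X$, $T'\subseteq Y$), and a vertex is placed into $S$ (resp.\ $T$) only at the moment it acquires a matched blue (resp.\ red) neighbour, which is simultaneously placed in $T$ (resp.\ $S$). Throughout I would keep in mind the invariant, maintained by initialisation and every rule, that each vertex of $S$ has exactly one neighbour in $T$ and vice versa; this is essentially the content of (i) and (ii) but stated only for the $S$--$T$ bipartite adjacency, so those two items need a small extra argument to rule out a \emph{second} neighbour in $Y$.

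For (i): let $v\in S$ with its matched neighbour $w\in T$; suppose $v$ had another neighbour $y\in Y$, $y\neq w$. If $y\in T$, then $v$ would have two neighbours in $T\subseteq Y$; but $v\in S\subseteq X$, and then R4-(i) would return {\tt no} (a vertex of $X$ adjacent to two vertices of $Y$), contradicting that we have an intermediate tuple. If $y\in Y\setminus T$, then $v\in X$ is adjacent to $y\in Y$, which is still two neighbours of $v$ in $Y$, so R4-(i) fires again — contradiction. Hence $v$ has exactly the one neighbour $w\in T$ inside $Y$, proving (i); item (ii) is identical with the colours swapped, using R4-(ii).

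For (iii): let $v\in X\setminus S$ and suppose $v$ has a neighbour $y\in Y$. Then R3-(i) is applicable to $v$ (a vertex of $X\setminus S$ adjacent to a vertex of $Y$), so it would add $v$ to $S$ — contradicting that $v\in X\setminus S$ in the final tuple. Hence $v$ has no neighbour in $Y$. Item (iv) is symmetric via R3-(ii). For (v): let $v\in Z=V\setminus(X\cup Y)$. If $v$ had a neighbour in $S$, then R2-(i) would move $v$ into $X$ (and if $v$ also had a neighbour in $Y$ it would even be added to $S$), so $v\notin Z$ in the final tuple, a contradiction; symmetrically $v$ has no neighbour in $T$ by R2-(ii). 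If $v$ had two neighbours in $X\setminus S$, then again R2-(i) (``adjacent to two vertices of $X\setminus S$'') moves $v$ to $X$, contradiction; likewise two neighbours in $Y\setminus T$ triggers R2-(ii). Hence $v$ has no neighbour in $S\cup T$, at most one in $X\setminus S$, and at most one in $Y\setminus T$, which is (v).

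The argument is almost entirely bookkeeping; the only place that needs a moment's care is items (i) and (ii), where one must notice that the relevant obstruction is not an $S$--$T$ rule but rather R4, the ``a vertex of $X$ adjacent to two vertices of $Y$'' clause — it is tempting to look for a violation among the matching rules R2--R3 and find nothing. Everything else is a direct ``if property fails, then rule $\mathrm{R}k$ still applies'' contradiction with the intermediacy of $(S,T,X,Y)$.
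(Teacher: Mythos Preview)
Your proof is correct and follows essentially the same approach as the paper: for each item you name the rule (R2, R3, or R4) that would still apply if the claimed property failed, contradicting that $(S,T,X,Y)$ is intermediate. The only cosmetic difference is in (i)--(ii), where the paper derives the existence of a $T$-neighbour of $v\in S$ by a short contradiction via R3 and R4, whereas you invoke the invariant that every vertex enters $S$ together with a matched partner in $T$; both arguments then finish with R4 to exclude a second neighbour in $Y$.
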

\begin{proof}
Let $Z=V\setminus (X\cup Y)$. We prove each of the statement below one by one.

\medskip
\noindent
{\it Proof of (i).} For a contradiction, first assume that some vertex~$u$ in $S$ has no neighbour in $T$. Then $u$ has no neighbour in $Y\setminus T$ either, else we would have applied R3. However, now we would have applied R4 (and returned a no-answer). Hence, every vertex in $S$ has a neighbour in $T\subseteq Y$. If a vertex in $S$ has more than one neighbour in $Y$, then we would have applied R4 as well. 

\medskip
\noindent
{\it Proof of (ii).} Statement (ii) follows by symmetry: we can use the same arguments as in the proof of (i). 

\medskip
\noindent
{\it Proof of (iii).} Let $u\in X\setminus S$. If $u$ has a neighbour in $Y$, then we would have applied R3. Hence, $u$ has no neighbours in $Y$. 

\medskip
\noindent
{\it Proof of (iv).} Statement (iv) follows by symmetry: we can use the same arguments as in the proof of (iii).

\medskip
\noindent
{\it Proof of (v).}  Let $u\in Z$. If $u$ is adjacent to a vertex in $S\cup T$, then we would have applied R2. If $u$ is adjacent to two vertices in $X\setminus S$ or to two vertices in $Y\setminus T$, then we would also have applied~R2. 
\end{proof}

\noindent
Let $(S,T,X,Y)$ be an intermediate $4$-tuple of a graph $G$. Let $Z=V\setminus (X\cup Y)$. A red-blue $(S,T,X,Y)$-colouring of $G$ is {\it monochromatic} if all connected components of $G[Z]$ are monochromatic.  Rules R8-R11 preserve this property; some of them were also used in~\cite{LL19,LT21}. 

\begin{enumerate}
\item [{\bf R8.}]
Let $v\in Z$. If $v$ is not adjacent to any vertex of $X\cup Y$, then return {\tt no}. \vspace*{-3mm}
\end{enumerate}

\begin{enumerate}
\item [{\bf R9.}]  Let $v\in Z$ be a vertex in a connected component $F$ of $G[Z]$ such that $v$ has only one neighbour~$w$ in~$X\cup Y$.
\begin{itemize}
\item [(i)] If $w\in X$, then put every vertex of $F$ in $Y$ and also add every vertex of $F$ to $T$ and every neighbour of every vertex of $F$ in $X$ to $S$.
\item [(ii)] If $w\in Y$, then put every vertex of $F$ in $X$ and also add every vertex of $F$ to $S$ and every neighbour of every vertex of $F$ in $Y$ to $T$. \vspace*{-3mm}
\end{itemize}
\end{enumerate}

\begin{enumerate}
\item [{\bf R10.}]  Let $v\in (X\cup Y)\setminus (S\cup T)$ and $F$ be a connected component of $G[Z]$ such that $v$ has two neighbours in $F$.
\begin{itemize}
\item [(i)] If $v\in X\setminus S$, then put every vertex of $F$ in $X$, and also add every vertex of $F$ to $S$ and every vertex of every neighbour of $F$ in $Y\setminus T$ to $T$.
\item [(ii)] If $v\in Y\setminus T$, then put every vertex of $F$ in $Y$, and also add every vertex of $F$ to $T$ and every vertex of every neighbour of $F$ in $X\setminus S$ to $S$. \vspace*{-3mm}
\end{itemize}
\end{enumerate}

\begin{enumerate}
\item [{\bf R11.}]   Let $v\in (X\cup Y)\setminus (S\cup T)$ and $F$ be a connected component of $G[Z]$ such that $v$ has one neighbour in $F$ that is the only neighbour of $v$ in $Z$.
\begin{itemize}
\item [(i)] If $v\in X\setminus S$ and $v$ is not adjacent to $Y$, then put every vertex of $F$ in $Y$, and also add every vertex of $F$ to $T$ and every vertex of every neighbour of $F$ in $X\setminus S$ to $S$.
\item [(ii)] If $v\in Y\setminus T$ and $v$ is not adjacent to $X$, then put every vertex of $F$ in $X$, and also add every vertex of $F$ to $S$ and every vertex of every neighbour of $F$ in $Y\setminus T$ to $T$.
\end{itemize}
\end{enumerate}

\noindent
A propagation rule is {\it mono-safe} if the input graph has a (monochromatic) perfect red-blue $(S,T,X,Y)$-colouring before the application of the rule if and only if it has so after the application of the rule. The following lemma is not difficult to prove.

\begin{lemma}\label{l-safe2}
Rules R8--R11 are mono-safe.
\end{lemma}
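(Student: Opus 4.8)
The plan is to proceed rule by rule, mirroring the proof of Lemma~\ref{l-safe} but additionally tracking the connected components of $G[Z]$, where $Z=V\setminus(X\cup Y)$. For each of R8--R11 I have to verify both implications in the definition of mono-safe. The backward implication is uniform and essentially free: each of R9--R11 only enlarges the sets by taking a single connected component $F$ of $G[Z]$ and moving all of it into $X$ or into $Y$ (recording the matches this forces), and R8 only outputs {\tt no}. Hence any monochromatic perfect $(S',T',X',Y')$-colouring obtained after the rule is also a perfect $(S,T,X,Y)$-colouring — the old constraints are a weakening of the new ones — and it is still monochromatic because the components of $G[Z]$ are exactly those of $G[Z']$ together with $F$, and $F$ lies entirely in $X'$ (red) or entirely in $Y'$ (blue). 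So the real work is the forward implication: starting from a monochromatic perfect $(S,T,X,Y)$-colouring $c$, show that $c$ is still a monochromatic perfect colouring for the updated tuple, or, for R8, that no such $c$ can exist.

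For R8: if some $v\in Z$ has no neighbour in $X\cup Y$, then every neighbour of $v$ lies in the component $F$ of $G[Z]$ containing $v$; since $c$ is monochromatic, $F$ is monochromatic, so all neighbours of $v$ share the colour of $v$, contradicting the fact that in a perfect colouring $v$ has a neighbour of the opposite colour (we also use that $G$ is connected with at least two vertices, so $v$ has a neighbour). For R9(i) (with (ii) symmetric): $F$ is monochromatic; if $F$ were red, then $v$ together with its unique outside-neighbour $w\in X$ would make every neighbour of $v$ red, again impossible, so $F$ is blue, exactly as the rule records; each $f\in F$ is then blue, hence in a perfect colouring has exactly one red neighbour, and by Lemma~\ref{l-lll1b}(v) its neighbours in $Z$ all lie in $F$ and it has no neighbour in $S\cup T$, so that red neighbour lies in $X\setminus S$, consistent with adding $f$ to $T$ and its red partner to $S$. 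For R10(i): if $F$ were blue the red vertex $v$ would have two blue neighbours, impossible, so $F$ is red, and each $f\in F$ is red with a unique blue neighbour which, again by Lemma~\ref{l-lll1b}(v), lies in $Y\setminus T$, matching the recorded update. For R11(i): since $v\in X\setminus S$ is red, not adjacent to $Y$, and has exactly one neighbour in $Z$, that neighbour must be the unique blue neighbour of $v$, so it is blue, whence $F$ is blue and the remaining bookkeeping is as in R9. In each case $c$ remains monochromatic for the new tuple because the components of $G[Z']$ are precisely those of $G[Z]$ other than $F$.

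The only real care needed — and this is the step I would flag as the main (purely technical) obstacle — is checking that the matches imposed by R9--R11 are mutually consistent and do not clash with the existing $S$ and $T$: when $f\in F$ is added to $T$ and its partner to $S$, one must know that two vertices of $F$ cannot share a partner and that a partner cannot already lie in $S$. Both follow from the validity of $c$ together with Lemma~\ref{l-lll1b}: two vertices of $F$ with a common red partner would give that partner two blue neighbours, and a vertex of $S$ is already matched into $T$ by Lemma~\ref{l-lll1b}(i), so it cannot additionally be the partner of some $f\in F$. With these observations the updates in R9--R11 are all compatible with $c$, which completes the case analysis and hence the proof.
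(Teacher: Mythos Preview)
Your proof is correct and follows the same rule-by-rule case analysis as the paper; in fact you are more careful than the paper in explicitly handling the backward implication and in checking that the bookkeeping in R9--R11 is consistent with the existing $S$ and $T$. One tiny caveat: Lemma~\ref{l-lll1b} is stated only for intermediate tuples, which need not persist throughout the exhaustive application of R1--R11, but the facts you extract from it (no neighbour of $f\in Z$ lies in $S\cup T$; the red partner of a new blue vertex cannot already be in $S$) follow directly from the definition of a red-blue $(S,T,X,Y)$-colouring together with the validity of $c$, exactly as your final paragraph argues.
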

\begin{proof}
Let $G$ be a connected graph with a monochromatic perfect red-blue $(S,T,X,Y)$-colouring. First recall that, by definition, vertices in $X$ will be coloured red by every red-blue $(S,T,X,Y)$-colouring, whilst vertices of $Y$ will be coloured blue, and moreover that every (red) vertex in $S$ has exactly one (blue) neighbour in $T$, and vice versa. The colour of the vertices in $Z$ still has to be decided.

Rule R8 is mono-safe. Let $F$ be the connected component of $G[Z]$ that contains $v$. Then all neighbours of $v$ belong to $F$, which must be monochromatic. Thus the neighbours of $v$ are coloured either all red or all blue. Hence, $v$ will not have the required neighbour with a different colour than itself. 

Rule R9-(i) is mono-safe. As all vertices in $F$ will be coloured with the same colour, this means that $w$ must receive a different colour than $v$. Hence, as $w$ is coloured red, we find that $v$, and thus all other vertices of $F$, must be coloured blue. As every vertex of $F$ only has neighbours in $F$ and in $X\cup Y$, we find that all neighbours of every vertex in $F$ are coloured. Hence, we can identify the unique red neighbours of the vertices of $F$, which in turn will be the unique blue neighbours of these vertices. For the same reason Rule R9-(ii) is mono-safe as well. 

Rule R10-(i) is mono-safe. All vertices in $F$ will be coloured with the same colour and at least two of them are adjacent to $v$. Hence, the vertices in $F$ must all get the same colour as the colour of $v$, which is red. Just as in the previous rule, we can now identify the unique blue neighbours of the vertices of $F$, which in turn will be the unique red neighbours of these vertices. For the same reason Rule R10-(ii) is safe as well. 

Rule R11-(i) is mono-safe. Let $w$ be the neighbour of $v$ in $F$. Then all other neighbours of $v$ belong to $X$; else $v$ would have a neighbour in $Y$ and we would have applied R3. As all vertices in $X$ are coloured red, this means that $w$ must be coloured blue. Hence, as every vertex of $F$ will be coloured the same, every vertex of $F$ will be coloured blue. Just as in the previous two rules, we can now identify the unique red neighbours of the vertices of $F$, which in turn will be the unique blue neighbours of these vertices. For the same reason Rule R11-(ii) is mono-safe.
\end{proof}

\noindent
Suppose exhaustively applying rules R1--R11 on an intermediate $4$-tuple $(S,T,X,Y)$ did not lead to a no-answer but to a $4$-tuple $(S^*,T^*,X^*,Y^*)$. We call $(S^*,T^*,X^*,Y^*)$ the {\it final} $4$-tuple.
The first part of Lemma~\ref{l-iff2} follows from Lemma~\ref{l-safe2}. The second part is straightforward.

\begin{lemma}\label{l-iff2}
Let $G$ be a graph with an intermediate $4$-tuple $(S,T,X,Y)$ and a resulting final $4$-tuple  $(S^*,T^*,X^*,Y^*)$.  Then $G$ has a monochromatic perfect red-blue $(S,T,X,Y)$-colouring if and only if $G$ has a monochromatic perfect red-blue $(S^*,T^*,X^*,Y^*)$-colouring. Moreover, $(S^*,T^*,X^*,Y^*)$ can be obtained in polynomial time.
\end{lemma}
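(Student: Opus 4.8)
The claim has two parts: correctness (the equivalence of monochromatic perfect red-blue $(S,T,X,Y)$-colourings before and after exhaustive application of R1--R11), and the polynomial running time. The plan is to obtain correctness by a straightforward induction on the number of rule applications, and to obtain the running time by a potential-function argument exactly as in the proof of Lemma~\ref{l-iff}.

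\emph{Correctness.} First I would observe that rules R1--R7 are safe (Lemma~\ref{l-safe}) and rules R8--R11 are mono-safe (Lemma~\ref{l-safe2}); I would note that safeness implies mono-safeness is \emph{not} what is needed---rather, I should check that applying a safe rule R1--R7 also preserves the \emph{monochromatic} version of the property, since $(S^*,T^*,X^*,Y^*)$ is reached by interleaving both groups of rules. This is immediate: R1--R7 either return {\tt no} (and then by Lemma~\ref{l-safe} there was no perfect red-blue colouring at all, hence none that is monochromatic) or they move vertices from $Z$ into $X\cup Y$ / into $S\cup T$ in a way forced by \emph{every} perfect red-blue $(S,T,X,Y)$-colouring; in particular the set of perfect red-blue colourings is unchanged, so the set of \emph{monochromatic} ones is unchanged as well, and a colouring that is monochromatic with respect to the components of $G[Z]$ before the rule is still monochromatic afterwards because each component of $G[Z]$ after the rule is a subset of a component of $G[Z]$ before the rule. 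Then, chaining: if $(S,T,X,Y) \to (S_1,T_1,X_1,Y_1) \to \cdots \to (S^*,T^*,X^*,Y^*)$ is the sequence of tuples produced, each step preserves (by Lemma~\ref{l-safe} or Lemma~\ref{l-safe2}) the existence of a monochromatic perfect red-blue colouring; composing the equivalences gives the statement. The one subtlety to flag explicitly is that Lemma~\ref{l-safe2} is phrased for a single rule on a fixed tuple, so I would state the inductive step carefully: after $k$ applications we have a tuple $(S_k,T_k,X_k,Y_k)$ which is itself a valid intermediate $4$-tuple (the rules maintain the defining closure properties of Lemma~\ref{l-lll1b}, so R8--R11 remain applicable with their stated meaning), and then one more application of a mono-safe rule preserves the property by Lemma~\ref{l-safe2}.

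\emph{Running time.} This is the ``straightforward'' second part and follows the template of Lemma~\ref{l-iff}. Each of R1--R11 can be tested for applicability in polynomial time (each rule quantifies over a bounded number of vertices and looks at their neighbourhoods, or at a connected component of $G[Z]$, all computable in polynomial time), and each application takes polynomial time. For termination I would use the measure $\mu = |Z|$ (equivalently $|V \setminus (X\cup Y)|$): every rule that does not return {\tt no} moves at least one vertex out of $Z$ and never returns a vertex to $Z$, so $\mu$ strictly decreases and is bounded below by $0$; hence at most $|V|$ applications occur before we either get a {\tt no}-answer or reach a tuple on which no rule applies, which is the final $4$-tuple. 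Multiplying the number of applications by the per-application cost gives a polynomial bound.

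\emph{Main obstacle.} There is no serious obstacle; the only thing requiring care is the bookkeeping in the correctness argument---namely verifying that the monochromaticity property is genuinely preserved by the safe (as opposed to mono-safe) rules R1--R7 when they are applied in the middle of the R1--R11 sequence, and that every tuple arising in the sequence still satisfies the structural invariants (Lemma~\ref{l-lll1b}) that make the later rules well-defined. Once that is spelled out, the proof is a routine induction plus the $|Z|$-potential argument.
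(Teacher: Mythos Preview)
Your approach is essentially the paper's: invoke mono-safeness (Lemma~\ref{l-safe2}) rule by rule for correctness, and bound the number of rule applications by a decreasing potential for the running time. You are in fact more careful than the paper, which simply writes ``follows from Lemma~\ref{l-safe2}'' and does not separately justify that the safe rules R1--R7 also behave well with respect to monochromaticity when interleaved with R8--R11.

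One small correction to your termination argument: the measure $\mu=|Z|$ does \emph{not} strictly decrease under rule R3, which only moves vertices from $X\setminus S$ into $S$ and from $Y\setminus T$ into $T$ without touching $Z$ at all. (The paper's own phrasing ``reduce the size of at least one of the sets $X$, $Y$, $Z$'' has the same defect.) An easy fix is to use instead the potential $|Z|+|X\setminus S|+|Y\setminus T|$, or equivalently observe that every non-\texttt{no} rule strictly enlarges at least one of $S,T,X,Y$, none of which ever shrinks, so at most $2|V|$ applications can occur.
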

\begin{proof}
The first part of the lemma follows from Lemma~\ref{l-safe2} and the fact that $(S^*,T^*,X^*,Y^*)$ results from $(S,T,X,Y)$. To prove the running time statement, we first note that each application of R1--R11 takes polynomial time. For each rule we can also check in polynomial time if it can be applied. Moreover, after each application of a rule we either find a no-answer or reduce the size of at least one of the sets $X$, $Y$, $Z$. Hence, we obtained $(S,T,X,Y)$ in polynomial time.
\end{proof}

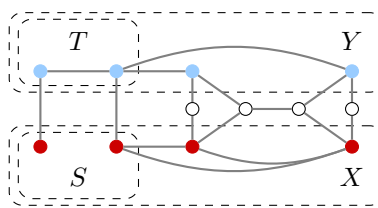
\begin{figure}[t]
\centering
\begin{tikzpicture}
\tikzstyle{evertex}=[thin,circle,inner sep=0.cm, minimum size=1.7mm, fill=none, draw=black]
\tikzstyle{rahmen1}=[rounded corners = 5pt,draw, dashed, minimum width = 15pt, minimum height = 30pt]
\tikzstyle{rahmen2}=[rounded corners = 5pt,draw, dashed, minimum width = 45pt, minimum height = 25pt]
\tikzstyle{rahmen3}=[rounded corners = 5pt,draw, dashed, minimum width = 140pt, minimum height = 30pt]
	
		\node[rvertex](v1) at (0,0){};
	\node[rvertex](v2) at (1,0){};
	\node[rvertex](v3) at (2,0){};
	\node[evertex](v4) at (2.7,0.5){};
	\node[evertex](v5) at (3.4,0.5){};
	\node[rvertex](v6) at (4.1,0){};
	\node[bvertex](v7) at (0,1){};
	\node[bvertex](v8) at (1,1){};
	\node[bvertex](v9) at (2,1){};
	\node[evertex](v10) at (2,0.5){};
	\node[evertex](v11) at (4.1,0.5){};
	\node[bvertex](v12) at (4.1,1){};
	
	\node[rahmen2](T) at (0.5,1.25){};
	\node[rahmen2](S) at (0.5,-0.25){};
	\node[](Tl) at (0.5, 1.4){$T$};
	\node[](Sl) at (0.5, -0.4){$S$};
	\node[rahmen3](S) at (2.05,-0.25){};
	\node[rahmen3](T) at (2.05,1.25){};
	\node[](Tl) at (4.1, 1.4){$Y$};
	\node[](Sl) at (4.1, -0.4){$X$};
	
	\draw[hedge](v1)--(v7);
	\draw[hedge](v7)--(v8);
	\draw[hedge](v2)--(v8);
	\draw[hedge](v2)--(v3);
	\draw[hedge](v8)--(v9);
	\draw[hedge](v3)--(v4);
	\draw[hedge](v3)--(v10);
	\draw[hedge](v4)--(v5);
	\draw[hedge](v5)--(v6);
	\draw[hedge](v5)--(v12);
	\draw[hedge](v6)--(v11);
	\draw[hedge](v4)--(v9);
	\draw[hedge](v9)--(v10);
	\draw[hedge](v11)--(v12);
		\path (v2) edge [out=340,in=200, thick, draw=gray] (v6);
		\path (v3) edge [out=340,in=200, thick, draw=gray] (v6);
		\path (v8) edge [out=20,in=160, thick, draw=gray] (v12);

\end{tikzpicture}
\caption{A red-blue $(S,T,X,Y)$-colouring of a graph with a final $4$-tuple $(S,T,X,Y)$.}\label{fig-lemma-rules}
\end{figure}

\noindent
We now describe the structure of a graph with a final $4$-tuple $(S,T,X,Y)$; see also Figure~\ref{fig-lemma-rules}.

\begin{lemma}\label{l-ll1b}
Let $G$ be a graph with a final $4$-tuple $(S,T,X,Y)$. The following holds:
\begin{itemize}
\item [(i)] every vertex in $S$ has exactly one neighbour in $Y$, which belongs to $T$;
\item [(ii)] every vertex in $T$ has exactly one neighbour in $X$, which belong to $S$;
\item [(iii)] every vertex in $X\setminus S$ has no neighbour in $Y$, at least two neighbours in $V\setminus (X\cup Y)$ but no two 
neighbours in the same connected component of $G[V\setminus (X\cup Y)]$; 
\item [(iv)] every vertex in $Y\setminus T$ has no neighbour in $X$, at least two neighbours in $V\setminus (X\cup Y)$ but no two 
neighbours in the same connected component of $G[V\setminus (X\cup Y)]$; 
\item [(v)] every vertex of $V\setminus (X\cup Y)$ has no neighbour in $S\cup T$, exactly one neighbour in $X\setminus S$ and exactly one neighbour in $Y\setminus T$.
\end{itemize}
\end{lemma}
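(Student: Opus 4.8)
The plan is to obtain Lemma~\ref{l-ll1b} from Lemma~\ref{l-lll1b} almost for free, and then to do some additional clean-up using the extra rules. A final $4$-tuple is in particular an intermediate $4$-tuple (none of R1--R7 applies to it), so all of Lemma~\ref{l-lll1b} holds: parts (i) and (ii) are exactly Lemma~\ref{l-lll1b}(i),(ii), and the assertions ``$x\in X\setminus S$ has no neighbour in $Y$'', ``$y\in Y\setminus T$ has no neighbour in $X$'', and ``$v\in Z:=V\setminus(X\cup Y)$ has no neighbour in $S\cup T$ and at most one neighbour in each of $X\setminus S$ and $Y\setminus T$'' carry over verbatim. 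What remains is to sharpen ``at most one'' to ``exactly one'' in (v) and to add the ``at least two neighbours in $Z$, no two in the same component of $G[Z]$'' clauses in (iii) and (iv); the only tool I need for this is that, in a final $4$-tuple, none of rules R8--R11 (nor R1--R7) can be applied.

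For (iii) I would fix $x\in X\setminus S$. Since $x$ has no neighbour in $Y$, every neighbour of $x$ outside $X$ lies in $Z$, and I would then argue by contradiction in three steps. If $x$ has two neighbours in one connected component $F$ of $G[Z]$, then we would have applied R10-(i). If $x$ has no neighbour in $Z$, then (having no neighbour in $Y$ either) $x$ has no neighbour outside $X$, and we would have applied R4-(i) and returned {\tt no}. If $x$ has exactly one neighbour $w$ in $Z$, then $w$ lies in a component $F$ of $G[Z]$, $w$ is the only neighbour of $x$ in $Z$, and $x$ is not adjacent to $Y$, so we would have applied R11-(i). Hence $x$ has at least two neighbours in $Z$, no two in the same component. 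Part (iv) is the mirror image, using R10-(ii), R4-(ii), R11-(ii).

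For (v) I would fix $v\in Z$ and let $F$ be the component of $G[Z]$ containing $v$. From Lemma~\ref{l-lll1b}(v), $v$ has no neighbour in $S\cup T$ and at most one neighbour in each of $X\setminus S$ and $Y\setminus T$; and since R8 does not apply, $v$ has a neighbour in $X\cup Y$, hence in $(X\setminus S)\cup(Y\setminus T)$. The only thing left to exclude is that $v$ has a neighbour in exactly one of $X\setminus S$, $Y\setminus T$: in that case $v$ would have a \emph{unique} neighbour in $X\cup Y$, lying in $X$ or in $Y$ accordingly, so we would have applied R9-(i) or R9-(ii). Therefore $v$ has exactly one neighbour in $X\setminus S$ and exactly one in $Y\setminus T$.

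I do not expect a real obstacle: this is a structural ``after the rules have stabilised'' lemma. The only point requiring care is to check, each time a rule is invoked toward a contradiction, that all of its side conditions actually hold --- in particular the ``$w$ is the only neighbour of $v$ in $Z$'' and ``$v$ is not adjacent to $Y$ (resp. $X$)'' hypotheses of R9 and R11, and the ``$v$ has two neighbours in $F$'' hypothesis of R10 --- which is precisely where the already-established ``no neighbour in $Y$ (resp. $X$)'' facts from Lemma~\ref{l-lll1b} are used.
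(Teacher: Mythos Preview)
Your proposal is correct and follows essentially the same approach as the paper: for each clause you derive a contradiction by naming the rule (R2, R3, R4, R8, R9, R10, R11) that would still apply, exactly as the paper does. The only organisational difference is that you invoke Lemma~\ref{l-lll1b} to import the parts already proved for intermediate $4$-tuples, whereas the paper re-derives those parts from scratch; this is harmless since the proof of Lemma~\ref{l-lll1b} uses only that R2--R4 do not apply, which is true of a final $4$-tuple as well.
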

\begin{proof}
Let $Z=V\setminus (X\cup Y)$. We prove each of the statement below one by one.

\medskip
\noindent
{\it Proof of (i).} For a contradiction, first assume that some vertex~$u$ in $S$ has no neighbour in $T$. Then $u$ has no neighbour in $Y\setminus T$ either, else we would have applied R3. However, now we would have applied R4 (and returned a no-answer). Hence, every vertex in $S$ has a neighbour in $T\subseteq Y$. If a vertex in $S$ has more than one neighbour in $Y$, then we would have applied R4 as well. 

\medskip
\noindent
{\it Proof of (ii).} Statement (ii) follows by symmetry: we can use the same arguments as in the proof of (i). 

\medskip
\noindent
{\it Proof of (iii).} Let $u\in X\setminus S$. If $u$ has a neighbour in $Y$, then we would have applied R3. Hence, $u$ has no neighbours in $Y$. If $u$ has no neighbours in $V\setminus (X\cup Y)$, then $u$ would only have neighbours in $X$. In that case we would have applied R4 (and returned a no-answer). If $u$ only has one neighbour in $Z$, then we would have applied R11. Hence, $u$ has at least two neighbours in $Z$. If two neighbours of $u$ in $Z$ belong to the same connected component of $Z$, then we would have applied R10. 

\medskip
\noindent
{\it Proof of (iv).} Statement (iv) follows by symmetry: we can use the same arguments as in the proof of (iii).

\medskip
\noindent
{\it Proof of (v).}  Let $u\in Z$.  If $u$ is adjacent to a vertex in $S\cup T$, then we would have applied R2. Hence, we find that $u$ is not adjacent to a vertex in $S\cup T$. If $u$ is adjacent to two vertices in $X\setminus S$ or to two vertices in $Y\setminus T$, then we would also have applied R2. If $u$ has exactly one neighbour in $X\cup Y$, then we would have applied R9.
If $u$ has no neighbour in $X\setminus S$ and no neighbour in $Y\setminus T$, then $u$ has no neighbour in $X\cup Y$, as we already deduced that $u$ has no neighbour in $S\cup T$. However, then we would have applied R8 (and returned a no-answer).
\end{proof}

\noindent
We can now prove a lemma that is the cornerstone for our polynomial-time results.

\begin{lemma}\label{l-mono}
Let $G$ be a graph with a final $4$-tuple $(S,T,X,Y)$. Then it is possible to find in polynomial time a monochromatic perfect red-blue $(S,T,X,Y)$-colouring of $G$ or conclude that such a colouring does not exist.
\end{lemma}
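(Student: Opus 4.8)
The plan is to reduce the problem to a combinatorial question about $2$-colouring the connected components of $G[Z]$, where $Z=V\setminus(X\cup Y)$, and then to solve that question in polynomial time.

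The first step is to observe, using \ref{l-ll1b}, that a monochromatic perfect red-blue $(S,T,X,Y)$-colouring is essentially just a red/blue colouring $\phi$ of the connected components of $G[Z]$ subject to two local conditions. Every vertex of $X$ is red and every vertex of $Y$ is blue by definition, so once $\phi$ is chosen the whole colouring is fixed, and it remains to say when the result is valid and perfect, that is, when every vertex has exactly one neighbour of the opposite colour. By \ref{l-ll1b}(i)--(ii), every vertex of $S$ and every vertex of $T$ has this property no matter what $\phi$ is, its matched neighbour lying in $T$, respectively in $S$. By \ref{l-ll1b}(v), every vertex $z\in Z$ has no neighbour in $S\cup T$, exactly one neighbour in $X\setminus S$ (which is red) and exactly one neighbour in $Y\setminus T$ (which is blue), so whichever colour $\phi$ assigns to the component of $z$, the vertex $z$ has exactly one neighbour of the opposite colour. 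By \ref{l-ll1b}(iii), a vertex $x\in X\setminus S$ has no neighbour in $Y$ and all its neighbours outside $Z$ are red, so $x$ has exactly one blue neighbour if and only if exactly one of the (pairwise distinct, by \ref{l-ll1b}(iii)) components of $G[Z]$ meeting $N(x)$ is blue; symmetrically, by \ref{l-ll1b}(iv), each $y\in Y\setminus T$ needs exactly one red component among those meeting $N(y)$. As $S$ and $T$ are non-empty by definition, both colours are used. Hence $G$ has a monochromatic perfect red-blue $(S,T,X,Y)$-colouring if and only if the components of $G[Z]$ admit a red/blue colouring $\phi$ such that (a) every $x\in X\setminus S$ has exactly one blue neighbouring component, and (b) every $y\in Y\setminus T$ has exactly one red neighbouring component.

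The second step is to solve (a)--(b) in polynomial time. I would reformulate it as an orientation problem: let $M$ be the multigraph on vertex set $(X\setminus S)\cup(Y\setminus T)$ with, for each $z\in Z$, one edge $e_z$ joining the unique neighbour of $z$ in $X\setminus S$ to the unique neighbour of $z$ in $Y\setminus T$ (these exist and are unique by \ref{l-ll1b}(v), and every vertex of $M$ has at least two incident edges by \ref{l-ll1b}(iii)--(iv)); moreover, by \ref{l-ll1b}(iii)--(iv) the edges $e_z$ with $z$ in one fixed component of $G[Z]$ form a matching of $M$. Orienting $e_z$ towards its $(X\setminus S)$-endpoint exactly when $\phi$ colours the component of $z$ blue, one checks that a colouring $\phi$ as in (a)--(b) exists if and only if $M$ admits an orientation in which every vertex has in-degree exactly one and, for every component $F$ of $G[Z]$, the edges $\{e_z : z\in F\}$ are oriented uniformly (all towards their $(X\setminus S)$-endpoint, or all towards their $(Y\setminus T)$-endpoint). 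I would then show that such a constrained orientation can be found, or ruled out, in polynomial time, and read $\phi$, and hence the desired colouring, off from it.

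The main obstacle is this last point. The conditions (a)--(b) are ``exactly-one'' constraints of unbounded arity (a vertex of $X\setminus S$ may be adjacent to arbitrarily many components of $G[Z]$), and exactly-one SAT is NP-hard in general, so the tractability has to be extracted from the structure guaranteed by \ref{l-ll1b}, in particular from the fact that each component of $G[Z]$ meets $N(x)$ in at most one vertex and hence contributes at most one edge at each vertex of $M$; this is exactly what makes the ``in-degree one'' reformulation manageable, for instance via a reduction to a flow or matching problem, or by analysing the cyclic structure that an in-degree-one orientation forces. Everything else is routine bookkeeping with the propagation rules already shown to be safe and mono-safe.
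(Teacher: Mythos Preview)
Your first step, the reformulation via Lemma~\ref{l-ll1b}, is correct and matches the paper.  The gap is precisely the point you flag as the ``main obstacle'': you do not actually solve the exactly-one constraint system, and you treat the arity as potentially unbounded.  The paper closes this gap with a short counting argument that you overlooked, and which makes the problem collapse to arity~$2$.

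Count the edges $E^*$ between $Z$ and $(X\cup Y)\setminus(S\cup T)$.  By Lemma~\ref{l-ll1b}(v) every $z\in Z$ contributes exactly two such edges, so $|E^*|=2|Z|$; by Lemma~\ref{l-ll1b}(iii)--(iv) every vertex of $(X\cup Y)\setminus(S\cup T)$ contributes at least two, so $|E^*|\geq 2|(X\cup Y)\setminus(S\cup T)|$, whence $|Z|\geq |(X\cup Y)\setminus(S\cup T)|$.  On the other hand, in any monochromatic perfect $(S,T,X,Y)$-colouring the unique opposite-colour neighbour of each $z\in Z$ lies in $(X\cup Y)\setminus(S\cup T)$, and these neighbours are pairwise distinct and exhaust $(X\cup Y)\setminus(S\cup T)$ (since those vertices have no other source for a match).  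Hence $|Z|\leq |(X\cup Y)\setminus(S\cup T)|$, so a solution can exist only if $|Z|=|(X\cup Y)\setminus(S\cup T)|$, and in that case every vertex of $(X\cup Y)\setminus(S\cup T)$ has \emph{exactly two} neighbours in $Z$, lying in two distinct components.  In your language, your multigraph $M$ is then $2$-regular, the exactly-one constraints all have arity~$2$, and the paper finishes with a direct {\sc $2$-SAT} encoding (one pair of variables per component, one clause pair per vertex of $(X\cup Y)\setminus(S\cup T)$).  Your orientation picture would work just as well at that point, since a $2$-regular multigraph is a disjoint union of cycles and the uniformity constraint per component is then a simple parity/consistency check; but without the counting step you have not reduced the problem to anything tractable.
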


\begin{proof}
Let $Z=V\setminus (X\cup Y)$. Let $E^*\subseteq E$ be the set of edges consisting of all edges with one end-vertex in $(X\cup Y)\setminus (S\cup T)$ and the other end-vertex in $Z$. By Lemma~\ref{l-ll1b}-(v), we find that $|E^*|=2|Z|$. By Lemma~\ref{l-ll1b}-(iii) and~(iv), we find that $|E^*|\geq 2|(X\cup Y)\setminus (S\cup T)|$. Hence, $|Z|\geq |(X\cup Y)\setminus (S\cup T)|$, and $|Z|=|(X\cup Y)\setminus (S\cup T)|$ if and only if each vertex in $(X\cup Y)\setminus (S\cup T)$ has exactly two neighbours in $Z$.

Every vertex $u\in Z$ still needs their matching neighbour $v$. In order for $G$ to have a monochromatic perfect red-blue $(S,T,X,Y)$-colouring, $v$ must be outside $S\cup T$, so $v$ belongs to $X\cup Y$. By Lemma~\ref{l-ll1b}-(v), we find that $v\in (X\cup Y)\setminus (S\cup T)$. As matching neighbours are ``private'', $|Z|\leq |(X\cup Y)\setminus (S\cup T)|$. We conclude that  $|(X\cup Y)\setminus (S\cup T)|=|Z|$. Our algorithm checks this in polynomial time and returns a no-answer if $|(X\cup Y)\setminus (S\cup T)|\neq |Z|$. 

From now on, assume $|(X\cup Y)\setminus (S\cup T)|=|Z|$. Hence, each vertex in $(X\cup Y)\setminus (S\cup T)$ has exactly two neighbours in $Z$. Just like~\cite{LL19}, we now construct an instance $\phi$ of the {\sc $2$-Satisfiability} problem ({\sc $2$-SAT}). 
Our $2$-SAT formula differs from the one in~\cite{LL19} due to the perfectness requirement. 
For each connected component $C$ of $G[Z]$, we do as follows. We define two variables $x_C$ and $y_C$, and we add the clause $(x_C\vee y_C)\wedge (\neg x_C \vee \neg y_C)$ to $\phi$. For each $u\in (X\cup Y)\setminus (S\cup T)$, we do as follows. From the above we known that $u$ has exactly two neighbours $v$ and $w$ in $Z$. Let $C$ be the connected component of $G[Z]$ that contains $v$ and $D$ be the connected component of $G[Z]$ that contains $w$. We add the clause $(x_C\vee x_D)\wedge (y_C\vee y_D)$ to $\phi$. This finishes the construction of $\phi$.

We claim that $G$ has a monochromatic perfect red-blue $(S,T,X,Y)$-colouring if and only if $\phi$ has a satisfying truth assignment.
It is readily seen and well known that {\sc $2$-SAT} is polynomial-time solvable, meaning we are done once we have proven this claim.

First suppose that $G$ has a monochromatic perfect red-blue $(S,T,X,Y)$-colouring~$c$. By definition, the vertices in each connected component~$C$ of $G[Z]$ are coloured alike. We define a truth assignment~$\tau$ as follows.
We let $x_C$ be true if and only if the vertices of $C$ are coloured red. We let $y_C$ be true if and only if the vertices of $C$ are coloured blue. As exactly one of these options holds, the clause $(x_C\vee y_C)\wedge (\neg x_C \vee \neg y_C)$ is satisfied. 

Now consider a clause $(x_C\vee x_D)\wedge (y_C\vee y_D)$ corresponding to a vertex $u\in (X\cup Y)\setminus (S\cup T)$ that has a neighbour in each of the connected components $C$ and $D$ of $G[Z]$. Then, by Lemma~\ref{l-ll1b}-(iii) and~(iv), $C$ and $D$ are different connected components of $G[Z]$. First assume that $u\in X\setminus S$. By Lemma~\ref{l-ll1b}-(iii), we find that $u$ has no neighbour in $Y$ and thus its blue neighbour must either be in $C$ or in $D$. If it is in $C$, then the neighbour of $u$ in $D$ is coloured blue, and vice versa. As $c$ is monochromatic, this means that either all vertices of $C$ are coloured red and all vertices of $D$ are coloured blue, or the other way around. Hence, the clause $(x_C\vee x_D)\wedge (y_C\vee y_D)$ is satisfied. If $u\in Y\setminus T$, we can use exactly the same arguments. We conclude that $\tau$ is a satisfying truth assignment.

Now suppose that $\phi$ has a satisfying truth assignment~$\tau$. For every connected component~$C$ of $G[Z]$, we colour the vertices of $C$ red if $x_C$ is true and we colour the vertices of $C$ blue if $y_C$ is true. As $\tau$ satisfies $(x_C\vee y_C)\wedge (\neg x_C \vee \neg y_C)$, exactly one of $x_C$ or $y_C$ is true. Hence, the colouring of the vertices of $Z$ is well defined. 

We also colour all vertices of $X$ red and all vertices of $Y$ blue. We let $c$ be the resulting colouring. By construction, it is monochromatic. Hence, it remains to show that $c$ is a perfect red-blue $(S,T,X,Y)$-colouring. We will do this below.

First, it follows from the definition of a core $(S'',T'')$ that $S''$ and $T''$ are non-empty. Moreover, before applying the reduction rules, we first do an initiation, from which it follows that $S''\subseteq S$ and $T''\subseteq T$. Hence,  at least one vertex of $G$ is coloured red and at least one vertex of $G$ is coloured blue.

By Lemma~\ref{l-ll1b}-(i), every vertex in $S$ has exactly one neighbour in $Y$. By Lemma~\ref{l-ll1b}-(ii), every vertex in $T$ has exactly one neighbour in $X$. By Lemma~\ref{l-ll1b}-(v), no vertex of $S\cup T$ is adjacent to a vertex of $Z$. Hence, the vertices in $S\cup T$ have exactly one neighbour of opposite colour.

By Lemma~\ref{l-ll1b}-(v), every vertex $z\in Z$ has exactly one neighbour in $X\setminus S$, which is coloured red, and exactly one neighbour in $Y\setminus T$, which is coloured blue; moreover, $z$ is not adjacent to any vertex in $S\cup T$. Let $C$ be the connected component of $G[Z]$ that contains $z$. As $c$ is monochromatic, all vertices of $C$ receive the same colour. Hence, the vertices in $Z$ have each exactly one neighbour of opposite colour. 

Finally, we must verify the vertices in $(X\cup Y)\setminus (S\cup T)$. Let $u\in (X\cup Y)\setminus (S\cup T)$. First assume that $u\in X\setminus S$, so $u$ is coloured red. We recall that $u$ has exactly two neighbours $v$ and $w$ in $Z$. Let $C$ be the connected component of $G[Z]$ that contains $u$, and let $D$ be the connected component of $G[Z]$ that contains $w$. Hence, $\tau$ contains the clause $(x_C\vee x_D)\wedge (y_C\vee y_D)$.
By Lemma~\ref{l-ll1b}-(iii), we find that $C$ and $D$ are two distinct connected components of $G[Z]$. As $\tau$ satisfies $(x_C\vee x_D)\wedge (y_C\vee y_D)$, the vertices of one of $C$, $D$ are coloured red, while the vertices of the other one are coloured blue. 
By Lemma~\ref{l-ll1b}-(iii), we find that $u$ has no (blue) neighbour in $Y$. Hence, $u$ has exactly one blue neighbour. If $u\in Y\setminus T$, we can apply the same arguments. We conclude that also the vertices in $(X\cup Y)\setminus (S\cup T)$ have exactly one neighbour of the opposite colour.

From the above we conclude that $c$ is monochromatic and perfect. 
 \end{proof}

\noindent
We apply Lemma~\ref{l-mono} in the next result. Its proof is similar but more involved than the one for {\sc Matching Cut} on graphs of radius~$2$~\cite{LPR}.

\begin{theorem}\label{t-radius}
 {\sc  Perfect Matching Cut} is polynomial-time solvable for graphs of radius at most~$2$. 
\end{theorem}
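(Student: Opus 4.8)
The plan is to decide, for a connected graph $G=(V,E)$ of radius at most~$2$, whether $G$ has a perfect matching cut, which by Observation~\ref{o}(iii) amounts to deciding whether $G$ has a perfect red-blue colouring. In polynomial time we compute a vertex $u$ of eccentricity at most~$2$, so that $D:=N[u]$ is a dominating set of $G$. Since swapping the two colours of a perfect red-blue colouring produces another one, we may assume that $u$ is coloured red; as the colouring is perfect, $u$ then has exactly one blue neighbour $u'$, which is matched to $u$. We branch over the $O(n)$ choices of $u'\in N(u)$. For a fixed choice we take $(S',T')=(\{u\},\{u'\})$ as a starting pair (its core is $(\{u\},\{u'\})$) and apply Rules R1--R7 exhaustively; by Lemma~\ref{l-iff} this takes polynomial time and either reports {\tt no}, in which case this choice of $u'$ is discarded, or returns an intermediate $4$-tuple $(S,T,X,Y)$ with the property that $G$ has a perfect red-blue $(\{u\},\{u'\},\{u\},\{u'\})$-colouring if and only if it has a perfect red-blue $(S,T,X,Y)$-colouring; note that a colouring of the former type is precisely a perfect red-blue colouring of $G$ in which $u$ is red and $u'$ is blue.

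The first step is to observe that $N[u]\subseteq X\cup Y$ for the resulting intermediate $4$-tuple: every vertex of $N(u)\setminus\{u'\}$ is adjacent to $u\in S$ and so was moved into $X$ by Rule R2, while $u\in S\subseteq X$ and $u'\in T\subseteq Y$ (a vertex of $N(u)\setminus\{u'\}$ that is also adjacent to $u'$ would have triggered Rule R1 and a correct {\tt no}-answer). Hence $Z:=V\setminus(X\cup Y)\subseteq V\setminus N[u]$, so every $z\in Z$ has a neighbour in $N(u)$. Since $z\notin Y$, this neighbour is not $u'$ (otherwise Rule R2(ii) would have moved $z$ into $Y$), so $z$ has a neighbour in $N(u)\setminus\{u'\}\subseteq X$; and since $z\notin X$, that neighbour lies in $X\setminus S$ (otherwise Rule R2(i) would have moved $z$ into $X$). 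By Lemma~\ref{l-lll1b}(v) a vertex of $Z$ has at most one neighbour in $X\setminus S$, so in fact every $z\in Z$ has exactly one neighbour in $X\setminus S$, and this neighbour lies in $N(u)\setminus\{u'\}$.

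The crucial step is to show that, for this center-based starting pair, every perfect red-blue $(S,T,X,Y)$-colouring $c$ of $G$ is \emph{monochromatic}, i.e., every connected component of $G[Z]$ is monochromatic under $c$. Suppose $z\in Z$ is blue under $c$. As $c$ is perfect it is valid, so $z$ has at most one red neighbour; by the previous step $z$ already has a red neighbour in $X\setminus S$, so every neighbour of $z$ in $Z$ is blue. Thus, in any connected component $F$ of $G[Z]$, the blue colour spreads along edges, so $F$ is either entirely blue or entirely red. This is exactly where the radius-$2$ hypothesis is used: for general graphs a single seed edge does not dominate, vertices of $Z$ with no coloured neighbour would cause a spurious {\tt no} in Rule R8, and non-monochromatic components of $G[Z]$ can survive Rules R1--R7. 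Given this monochromaticity, Rules R8--R11 (which by Lemma~\ref{l-safe2} are only mono-safe) cannot produce a spurious {\tt no}-answer on this branch, and by Lemma~\ref{l-iff2} we obtain, in polynomial time, either {\tt no} or a final $4$-tuple $(S^*,T^*,X^*,Y^*)$ such that $G$ has a monochromatic perfect red-blue $(S,T,X,Y)$-colouring if and only if it has a monochromatic perfect red-blue $(S^*,T^*,X^*,Y^*)$-colouring.

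Finally, we apply Lemma~\ref{l-mono} to the final $4$-tuple to decide in polynomial time whether $G$ has a monochromatic perfect red-blue $(S^*,T^*,X^*,Y^*)$-colouring, and we answer that $G$ has a perfect matching cut exactly when some branch (some choice of $u'$) yields such a colouring. For soundness, a colouring returned by Lemma~\ref{l-mono} lifts, through Lemmas~\ref{l-iff2} and~\ref{l-iff}, to a perfect red-blue colouring of $G$, which by Observation~\ref{o}(iii) gives a perfect matching cut. For completeness, if $G$ has a perfect matching cut then the corresponding perfect red-blue colouring, recoloured so that $u$ is red, determines the correct partner $u'$, and by Lemmas~\ref{l-safe} and~\ref{l-safe2} together with the monochromaticity established above it survives all reduction steps, so the matching branch succeeds. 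As there are $O(n)$ branches, each of polynomial cost, the algorithm runs in polynomial time. I expect the main obstacle to be the careful interaction of the propagation rules with the perfectness requirement---in particular establishing $N[u]\subseteq X\cup Y$ and the monochromaticity claim---which is precisely the part that makes this proof ``more involved'' than the one for {\sc Matching Cut} in~\cite{LPR}.
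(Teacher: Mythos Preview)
Your proof is correct and follows essentially the same route as the paper's: branch over the blue partner $u'$ of a centre vertex $u$, apply Rules R1--R7, use $N[u]\subseteq X\cup Y$ to show that every vertex of $Z$ already has a red neighbour in $X$ so that any perfect red-blue $(S,T,X,Y)$-colouring is monochromatic, and then finish with Rules R8--R11 and Lemma~\ref{l-mono}. The only difference is that the paper first handles the radius-$1$ case and the case where $N[u]$ is monochromatic (via Lemma~\ref{l-dom2}) separately, but for \emph{perfect} red-blue colourings the latter check can never succeed (the centre $u$ would have no neighbour of the opposite colour), so your omission of it is harmless.
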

\begin{proof}
Let $G$ be a graph of radius $r$ at most $2$.
If $r=1$, then $G$ has a vertex that is adjacent to all other vertices. In this case $G$ has a perfect matching cut if and only if $G$ consists of two vertices with an edge between them. From now on, assume that $r=2$. Then $G$ has a dominating star $H$, say $H$ has centre $u$ and leaves $v_1,\ldots,v_s$ for some $s\geq 1$. By Observation~\ref{o} it suffices to check if $G$ has a perfect red-blue colouring.

We first check if $G$ has a perfect red-blue colouring in which $V(H)$ is monochromatic. By Lemma~\ref{l-dom2} this can be done in polynomial time. Suppose we find no such red-blue colouring. Then we may assume without loss of generality that a perfect red-blue colouring of $G$ (if it exists) colours $u$ red and exactly one of $v_1,\ldots,v_s$ blue.
That is, $G$ has a perfect red-blue colouring if and only if $G$ has a perfect red-blue $(\{u\},\{v_i\},\{u\},\{v_i\})$-colouring for some $i\in \{1,\ldots,s\}$.
We consider all $O(n)$ options of choosing which $v_i$ is coloured blue. 

For each option we do as follows. Let $v_i$ be the vertex of $v_1,\ldots,v_s$ that we coloured blue. We define
the starting pair $(S',T')$ with core $(S',T')$, where $S'=\{u\}$ and $T'=\{v_i\}$. We now apply rules R1--R7 exhaustively. The latter takes polynomial time by Lemma~\ref{l-iff}. If this exhaustive application leads to a no-answer, then by Lemma~\ref{l-iff} we may discard the option. Suppose we obtain an intermediate $4$-tuple $(S,T,X,Y)$. By again applying Lemma~\ref{l-iff}, we find that
$G$ has a perfect red-blue $(\{u\},\{v_i\},\{u\},\{v_i\})$-colouring if and only if $G$ has a perfect red-blue $(S,T,X,Y)$-colouring. By R2-(i) and the fact that
$u\in S'\subseteq S$ we find that $\{v_1,\ldots,v_s\}\setminus \{v_i\}$ belongs to~$X$.

Suppose that $G$ has a perfect red-blue $(S,T,X,Y)$-colouring~$c$ such that $G[V(G)\setminus(X\cup Y)]$ has a connected component~$D$ that is not monochromatic. Then $D$ must contain an edge~$uv$, where $u$ is coloured red and $v$ is coloured blue. 
Note that $v$ cannot be adjacent to $v_i$, as otherwise $v$ would have been in $Y$ by R3 (since $v_i\in T'\subseteq T$). As $H$ is dominating, this means that $v$ must be adjacent to a vertex $w\in V(H)\setminus \{v_i\}=\{u,v_1,\ldots,v_s\}\setminus \{v_i\}$. 
As $u\in S'\subseteq S\subseteq X$ and $\{v_1,\ldots,v_s\}\setminus \{v_i\}\subseteq X$, we find that $w\in X$ by R2-(i) and thus will be coloured red. However, now $v$ being coloured blue is adjacent to two red vertices (namely $u$ and $w$), contradicting the validity of $c$. 

From the above we conclude that every perfect red-blue $(S,T,X,Y)$-colouring of $G$ is monochromatic. 
We now apply rules R1--R11 exhaustively. The latter takes polynomial time by Lemma~\ref{l-iff2}. If this exhaustive application leads to a no-answer, then by Lemma~\ref{l-iff2} we may discard the option. Suppose we obtain a final $4$-tuple $(S^*,T^*,X^*,Y^*)$. By again applying Lemma~\ref{l-iff2}, we find that
$G$ has a monochromatic perfect red-blue $(S,T,X,Y)$-colouring if and only if $G$ has a monochromatic perfect red-blue $(S^*,T^*,X^*,Y^*)$-colouring.
We can now apply Lemma~\ref{l-mono} to find in polynomial time whether or not $G$ has a monochromatic perfect red-blue $(S^*,T^*,X^*,Y^*)$-colouring.
The correctness of our algorithm follows from the above arguments.
As we branch $O(n)$ times and each branch takes polynomial time to process, the total running time of our algorithm is polynomial.
\end{proof}

\noindent
We now consider $P_6$-free graphs. As a consequence of Theorem~\ref{t-hp}, a $P_6$-free graph either has a small domination number, in which case we use Lemma~\ref{l-dom}, a monochromatic dominating set, in which case we use Lemma~\ref{l-dom2}, or it has radius~$2$, in which case we use Theorem~\ref{t-radius}.

\begin{theorem}\label{t-p6}
{\sc Perfect Matching Cut} is polynomial-time solvable for $P_6$-free graphs.
\end{theorem}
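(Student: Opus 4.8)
The plan is to use Observation~\ref{o} to reduce to deciding whether a connected $P_6$-free graph $G$ admits a perfect red-blue colouring, and then to exploit the structure of $P_6$-free graphs supplied by Theorem~\ref{t-hp}. First I would compute in polynomial time a dominating subgraph $H$ of $G$ that is either an induced $C_6$ or a (not necessarily induced) complete bipartite graph $K_{k,\ell}$ with $1\le k\le \ell$, and then distinguish cases according to the type of $H$.

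If $H$ is an induced $C_6$, then $V(H)$ is a dominating set of $G$ of size~$6$, so the domination number of $G$ is at most~$6$ and Lemma~\ref{l-dom} (with $g=6$) solves the problem in polynomial time; the same reasoning handles $H=K_{2,2}$, using Lemma~\ref{l-dom} with $g=4$. If $H=K_{1,\ell}$ (in particular if $H=K_{1,1}$), then the centre of the star is adjacent to all of its leaves, which together with the centre dominate $G$; hence the centre has eccentricity at most~$2$, so $G$ has radius at most~$2$ and Theorem~\ref{t-radius} applies.

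The remaining case is $H=K_{k,\ell}$ with $k\ge 2$ and $\ell\ge 3$; write $A,B$ for the two sides, with $|A|=k\le |B|=\ell$. Here I would prove the key claim that in every perfect red-blue colouring of $G$ the dominating set $V(H)=A\cup B$ is monochromatic. The argument splits on whether $B$ is monochromatic: if it is, then since each vertex of $A$ is adjacent to all $\ell\ge 3$ vertices of $B$, while in a perfect red-blue colouring a red vertex has at most one blue neighbour and a blue vertex at most one red neighbour, every vertex of $A$ must take the colour of $B$, so $A\cup B$ is monochromatic; if $B$ contains a red and a blue vertex, then the red one forces at most one vertex of $A$ to be blue and the blue one forces at most one vertex of $A$ to be red, so $k=|A|\le 2$, and the symmetric argument with $A$ and $B$ interchanged then forces $\ell=|B|\le 2$, contradicting $\ell\ge 3$. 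Granting this claim, $G$ has a perfect red-blue colouring if and only if it has one in which $V(H)$ is monochromatic, which Lemma~\ref{l-dom2} decides in polynomial time.

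Since Theorem~\ref{t-hp} guarantees that one of these cases always occurs and each is resolved in polynomial time, the theorem follows. I expect the only genuinely non-routine ingredient to be the monochromaticity claim in the last case: besides the short case analysis on whether the larger side of $H$ is monochromatic, one must be careful to separate out the exceptional subgraphs $K_{1,\ell}$ and $K_{2,2}$, for which the claim is false (e.g.\ $K_{2,2}=C_4$ has a non-monochromatic perfect matching cut) and for which a different tool---Theorem~\ref{t-radius}, respectively Lemma~\ref{l-dom}---must be invoked instead.
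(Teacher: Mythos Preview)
Your proposal is correct and follows essentially the same approach as the paper: both use Theorem~\ref{t-hp} to obtain a dominating $C_6$ or $K_{k,\ell}$, handle the $C_6$ and small-$K_{k,\ell}$ cases via Lemma~\ref{l-dom}, the $K_{1,\ell}$ case via Theorem~\ref{t-radius}, and the case $k\ge 2$, $\ell\ge 3$ by showing $V(H)$ must be monochromatic and then applying Lemma~\ref{l-dom2}. Your monochromaticity argument is in fact more explicit than the paper's one-line justification (``any starting pair $(\{u_i\},\{v_j\})$ yields a no-answer''); just make sure to state that from $k\le 2$ together with the hypothesis $k\ge 2$ you get $k=2$ with exactly one vertex of each colour in $A$, which is what licenses the symmetric argument.
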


\begin{proof}
Let $G$ be a connected $P_6$-free graph. By Theorem~\ref{t-hp}, we find that $G$ has a  dominating induced $C_6$ or 
a dominating (not necessarily induced) complete bipartite graph $K_{r,s}$. By Observation~\ref{o} it suffices to check if $G$ has a perfect red-blue colouring.

If $G$ has a dominating induced $C_6$, then $G$ has domination number at most~$6$. In that case we apply Lemma~\ref{l-dom} to find in polynomial time if $G$ has a perfect red-blue colouring.
Suppose that $G$ has a dominating complete bipartite graph $H$ with partition classes $\{u_1,\ldots,u_r\}$ and
$\{v_1,\ldots,v_s\}$. We may assume without loss of generality that $r\leq s$.

If $r\geq 2$ and $s\geq 3$, then any starting pair $(\{u_i\},\{v_j\})$ yields a no-answer. Hence, $V(H)$ is monochromatic for any perfect red-blue colouring of $G$. 
This means that we can check in polynomial time by Lemma~\ref{l-dom2} if $G$ has a perfect red-blue colouring.

Now assume that $r=1$ or $s\leq 2$. In the first case, $G$ has a (not necessarily induced) dominating star and thus $G$ has radius~2, and we apply Theorem~\ref{t-radius}. In the second case, $r\leq s\leq 2$, and thus
$G$ has domination number at most~$4$, and we apply Lemma~\ref{l-dom}. Hence, in both cases, we find in polynomial time whether or not $G$ has a perfect red-blue colouring.
\end{proof}

\noindent
For our last result we again use Lemma~\ref{l-mono}.

\begin{theorem}\label{t-h}
Let $H$ be a graph. If {\sc Perfect Matching Cut} is polynomial-time solvable for $H$-free graphs, then it is so for $(H+P_4)$-free graphs.
\end{theorem}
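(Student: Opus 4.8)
The plan is to adapt the strategy behind Theorem~\ref{t-radius}: use the forbidden pair $H+P_4$ to identify a large part of $G$ whose colour is determined after a polynomial amount of guessing, and then finish with the propagation rules R1--R11 together with Lemma~\ref{l-mono}.

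Let $G$ be a connected $(H+P_4)$-free graph and put $h=|V(H)|$; note $h$ is a constant. If $G$ is $H$-free we are done by assumption, so assume $G$ contains an induced subgraph isomorphic to $H$ on a vertex set $W$, which can be found in polynomial time. Writing $N[W]=W\cup N(W)$, the crucial observation is that $G[V\setminus N[W]]$ is $P_4$-free: an induced $P_4$ in $G-N[W]$ together with $G[W]$ would form an induced $H+P_4$ in $G$. By Observation~\ref{o}(iii) it suffices to decide whether $G$ has a perfect red-blue colouring.

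In a perfect red-blue colouring every vertex has a unique neighbour of the opposite colour. So I would branch over all $O(2^h)$ red-blue colourings of $W$ and, for each $w\in W$, over all $O(n)$ choices of the matched neighbour $p(w)\in N[W]$ of $w$, keeping only the $O(2^h n^h)$ branches in which these choices are mutually consistent: the colouring of $W$ is valid, $p$ is an involution on the pairs lying inside $W$, $p$ is injective on the $w$ with $p(w)\notin W$, and the resulting partial matching $M=\{wp(w):w\in W\}$ — with red endpoints $S'$ and blue endpoints $T'$ — forms a valid starting pair in the sense of Section~\ref{s-main3}. For each surviving branch I would take the core $(S'',T'')$ of $(S',T')$ — here one checks that $W\subseteq S''\cup T''$, since each $w\in W$ has $p(w)$ as its unique neighbour in $S'\cup T'$ of the opposite colour — and apply rules R1--R7 exhaustively (Lemma~\ref{l-iff}), discarding the branch if this returns {\tt no}. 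Otherwise we obtain an intermediate $4$-tuple $(S,T,X,Y)$, and because $W\subseteq S''\cup T''\subseteq S\cup T$, exhaustive application of R2 guarantees $N(W)\subseteq X\cup Y$; hence $Z:=V\setminus(X\cup Y)\subseteq V\setminus N[W]$, so $G[Z]$ is $P_4$-free.

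The heart of the argument — and the step I expect to be the main obstacle — is to prove, exactly as in the proof of Theorem~\ref{t-radius}, that every perfect red-blue $(S,T,X,Y)$-colouring of $G$ is monochromatic, i.e.\ monochromatic on each connected component of $G[Z]$. Here one uses that $G[Z]$ is $P_4$-free: if a component $F$ of $G[Z]$ received both colours under such a colouring, then $F$ is a connected cograph in which every vertex has at most one neighbour of the opposite colour, and a structural analysis of connected cographs forces $F$ to be one of a small number of very restricted configurations (a single edge, a $C_4$ coloured around the cycle, a near-clique with one pendant vertex, and the like). For each of these one argues that rule R3, R5, R6 or R7 would have been applicable — using connectivity of $G$ to dispose of the residual case where $F$ is all of $G$ — contradicting that the $4$-tuple was obtained by exhaustive application of R1--R7; making this case analysis clean, and verifying that no exceptional configuration slips through, is the delicate part. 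Granting this claim, I would then apply R1--R11 exhaustively (Lemma~\ref{l-iff2}) to obtain a final $4$-tuple and invoke Lemma~\ref{l-mono} to decide in polynomial time whether $G$ has a monochromatic perfect red-blue colouring extending it; we accept if and only if some branch accepts. Correctness in one direction is immediate, and in the other it follows because any perfect red-blue colouring of $G$ gives rise to a consistent branch, survives R1--R7 by Lemma~\ref{l-iff}, is monochromatic by the claim, and hence survives R1--R11 by Lemma~\ref{l-iff2}. Since there are polynomially many branches and each is processed in polynomial time, the total running time is polynomial.
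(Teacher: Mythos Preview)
Your overall strategy matches the paper's, and your branching and use of Lemmas~\ref{l-iff}, \ref{l-iff2}, \ref{l-mono} are set up correctly. The gap is precisely where you anticipated it: the claim that every perfect red-blue $(S,T,X,Y)$-colouring is monochromatic on each component of $G[Z]$ is \emph{false} after exhaustive application of R1--R7 alone, and no amount of case analysis will establish it without an extra ingredient.

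Here is the configuration that slips through. Let $F$ be a star with centre~$v$ and leaves $u,w_1,\ldots,w_k$ (this is $P_4$-free), where $v$ and $u$ have no neighbour in $X\cup Y$, while each $w_i$ has exactly one neighbour in $X\setminus S$ and one in $Y\setminus T$. None of R1--R7 fires: R5 needs the centre $v$ to have a neighbour in $X\cup Y$; R6 needs $F\cong C_4$; R7 needs some vertex of $F-v$ with \emph{exactly one} neighbour in $X\cup Y$, but $u$ has zero and each $w_i$ has two. Yet there is a perfect red-blue $(S,T,X,Y)$-colouring with $v,w_1,\ldots,w_k$ all red and $u$ blue (each $w_i$ matched to its neighbour in $Y$, and $u,v$ matched to each other), so $F$ is not monochromatic. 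Your algorithm would then pass to R8--R11 and Lemma~\ref{l-mono}, which only search for \emph{monochromatic} colourings, and could wrongly reject.

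The paper closes this gap not by strengthening the rules but by a small preprocessing step \emph{between} R1--R7 and the monochromaticity claim: for every component $F$ of $G[Z]$ that has a dominating vertex $v$ and a unique other vertex $u$ with no neighbour in $X\cup Y$, such that $v$ also has no neighbour in $X\cup Y$ and every vertex of $F\setminus\{u,v\}$ has a neighbour in both $X$ and $Y$, one observes that $F\setminus\{u\}$ is forced monochromatic and $u$ forced to the opposite colour; so one deletes $u$ and $v$, solves on the smaller instance, and reinstates their colours afterwards. After this reduction the $k=1$ case of the cograph analysis (spanning $K_{1,\ell}$) goes through, because now the dominating vertex $v$ is guaranteed to have a neighbour in $X\cup Y$, and R5 then disposes of a pendant $u$. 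You should add this preprocessing (or an equivalent device) before asserting the claim.
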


\begin{proof}
Assume that {\sc Perfect Matching Cut} can be solved in polynomial time for $H$-free graphs. Let $G$ be a connected $(H+P_4)$-free graph. Say, $G$ has an induced subgraph $G'$ that is isomorphic to $H$; else we are done by our assumption. Let $G^*$ be the graph obtained from $G$ after removing every vertex that belongs to $G'$ or that has a neighbour in~$G'$. As $G'$ is isomorphic to $H$ and $G$ is $(H+P_4)$-free, we find that $G^*$ is $P_4$-free.

We use Observation~\ref{o}-(iii) and search for a perfect red-blue colouring. We define $n =|V(G)|$,  $m= |E(G)|$. Following our approach, we need a starting pair $(S',T')$ with core $(S'',T'')$. By definition, $|S''|=|T''|\geq 1$. Hence, we consider all $O(m)$ options of choosing an edge $uv$ from $E(G)$, one of whose end-vertices we colour red (say $u$, so $u\in S''$) and the other one blue (say $v$, so $v\in T''$). Afterwards, for each (uncoloured) vertex in $G'$ we consider all options of colouring it either red or blue. As $G'$ is isomorphic to $H$, the number of distinct options is a constant, namely $2^{|V(H)|}$. Now, for every red (blue) vertex of $G'$ with no blue (red) neighbour, we consider all $O(n)$ options of colouring exactly one of its neighbours blue (red).
Hence, afterwards each vertex of $V(G')\cup N(V(G'))$ is either coloured red or blue. This leads to  $O(m2^{|V(H)|}n^{|V(H)|})$ options (branches), which we handle one by one. 

Consider an option as described above. Let $S'$ consist of $u$ and all red vertices of $V(G')\cup N(V(G'))$, and let $T'$ consist of $v$ and all blue vertices of $V(G')\cup N(V(G'))$. In this way we obtain a starting pair $(S',T')$ with core $(S'',T'')$. We apply rules R1-R7 exhaustively. If we find a no-answer, then we can discard the option by Lemma~\ref{l-iff}. Else we found in polynomial time an intermediate $4$-tuple $(S,T,X,Y)$, such that  $G$ has a perfect red-blue $(S'',T'',S',T')$-colouring  if and only if $G$ has a perfect red-blue $(S,T,X,Y)$-colouring. 

Consider a connected component $F$ of  $G-(X\cup Y)$, for which the following holds:

\begin{enumerate}
\item $F$ contains two distinct vertices $u$ and $v$, each with no neighbours in $X\cup Y$ and moreover, $v$ is dominating $F$; and
\item every vertex in $F-\{u,v\}$ has a neighbour in both $X$ and $Y$.
\end{enumerate}

\noindent
As $G$ is connected, the fact that $u$ and $v$ have no neighbours in $X\cup Y$ implies that $F-\{u,v\}$ is non-empty. Every vertex in $F-\{u,v\}$ has a neighbour in both $X$ and $Y$ and thus their matching neighbour is not in $F$. Hence, all vertices of $F-\{u\}$ are coloured alike in every perfect red-blue $(S,T,X,Y)$-colouring of $G$. Hence, we can safely remove $u$ and $v$. Then, after finding a monochromatic perfect red-blue $(S,T,X,Y)$-colouring of $G-\{u,v\}$ we give $v$ the same colour as the vertices of $F-\{u,v\}$ and $u$ the opposite colour. 

We also perform this operation for all other connected components of $G-(X\cup Y)$ that have the above two properties.
This yields, in polynomial time, a new but equivalent problem instance, which we denote by $G$ again.

\begin{claim*} Every perfect red-blue $(S,T,X,Y)$-colouring of $G$ is monochromatic.
\end{claim*}

\begin{claimproof}
In order to see this claim, let $F$ be a connected component of $G-(X\cup Y)$. If $|V(F)|=1$, then
$F$ will be monochromatic. Assume $|V(F)|\geq 2$. As  $V(G')\cup N(V(G'))\subseteq S'\cup T'$ and $S'\subseteq X$ and $T'\subseteq Y$, we find that $V(F)$ belongs to $G^*$. Since $G^*$ is $P_4$-free, $F$ is $P_4$-free. It is well-known (see e.g. Lemma~2 in~\cite{KP20}) that every connected $P_4$-free graph has a spanning complete bipartite subgraph $K$. Say, $K$ is isomorphic to $K_{k,\ell}$ for some integers $1\leq k \leq \ell$.

If $k\geq 2$ and $\ell \geq 3$, then $F$ must be monochromatic. Now suppose that $k=\ell=2$, so $F$ contains a $C_4$ as spanning subgraph. If $K$ contains a vertex~$u$ that has a neighbour in both $X$ and $Y$, then the matching neighbour of $u$ is in $X\cup Y$, so not in $F$. Hence, the neighbours of $u$ in $F$ must receive the same colour as $u$, which means that the fourth vertex of $F$ must also receive the same colour as $u$ (if that vertex is not adjacent to $u$, then it will be adjacent to the two neighbours of $u$ in $F$, as $F$ contains a spanning $C_4$). So $F$ is monochromatic. 

We conclude that every vertex of $F$ is adjacent to at most one vertex of $X\cup Y$. As $G$ is connected, $F$ has at least one vertex~$v$ with a neighbour~$w$ in $X\cup Y$, say $w\in X$. Then the other three vertices of $F$ must also have a neighbour in~$X$ (and thus no neighbour in $Y$),  else we would have applied R6. The only way we can extend the red-blue $(S,T,X,Y)$-colouring to a perfect red-blue colouring of $G$ is by colouring each vertex of $F$ blue, so $F$ is monochromatic.

It remains to consider the case where $k=1$ and $\ell\geq 1$. In this case $F$ contains a vertex~$v$ such that $\{v\}$ dominates $F$. Then, every vertex in $F-v$ has either no neighbours in $X\cup Y$ or a neighbour in both $X$ and $Y$; else we would have applied R7. Let $U$ be the set of vertices in $F-v$ with no neighbour in $X\cup Y$. As $\{v\}$ dominates $F$, every connected component of $F-v$ is monochromatic. So, $v$ is the matching neighbour of every vertex of $U$. If $|U|\geq 2$, then $G$ has no perfect red-blue $(S,T,X,Y)$-colouring so the claim is true. If $|U|=0$, then the vertices in $F-v$ all have a neighbour both in $X$ and $Y$. So, they do not have their matching neighbour in $F$ and thus will receive the same colour as~$v$. Hence, $F$ is monochromatic. Assume $U=\{u\}$.

 If $v$ is adjacent to a vertex in $X$ and to a vertex in $Y$, then its matching neighbour is in $X\cup Y$, so not in $F$. As $\{v\}$ dominates $F$, this means that $F$ must be monochromatic. Hence, $v$ is adjacent to at most one vertex of $X\cup Y$.

Note that by construction, $v$ is adjacent to exactly one vertex~$w$ of $X\cup Y$. Then $u$ has at least one neighbour in $F-v$; else we would have applied R5. Let $u'$ be an arbitrary neighbour of $u$ in $F-v$. As both $u$ and $u'$ are adjacent to $v$,  it follows that $u,u',v$ are coloured alike. Hence, $u$ has no matching neighbour. This means that $G$ has no perfect red-blue $(S,T,X,Y)$-colouring and the claim is true.
\end{claimproof}

\medskip
\noindent
We now apply rules R1--R11 exhaustively. This takes polynomial time by Lemma~\ref{l-iff2}. If this leads to a no-answer, then by Lemma~\ref{l-iff2} we may discard the option. Suppose we obtain a final $4$-tuple $(S^*,T^*,X^*,Y^*)$. By Lemma~\ref{l-iff2}, 
$G$ has a monochromatic perfect red-blue $(S,T,X,Y)$-colouring if and only if $G$ has a monochromatic perfect red-blue $(S^*,T^*,X^*,Y^*)$-colouring.
We apply Lemma~\ref{l-mono} to find in polynomial time if the latter holds.  If so, we are done by the Claim, else we discard the option.

The correctness of our algorithm follows from its description. As the total number of branches is $O(m2^{|V(H)|}n^{|V(H)|})$ and we can process each branch in polynomial time, the total running time of our algorithm is polynomial. Hence, we have proven the theorem.
\end{proof}

\section{Conclusions}\label{s-con}

We found new results on $H$-free graphs for three closely related edge cut problems: the classical {\sc Matching Cut} problem and its variants, {\sc Disconnected Perfect Matching} and {\sc Perfect Matching Cut}. 
We summarized  all known and new results for $H$-free graphs in Theorems~\ref{t-main1}--\ref{t-main3}. 
Due to our systematic study we are now able to identify some interesting open questions.

First, is there a graph $H$ for which the problems behave differently on $H$-free graphs? The graph $H=4P_5$ is a potential candidate if one can generalize Theorem~\ref{t-h}. 
Also, does there exist, just as for the other two problems, a constant~$r$ such that {\sc Perfect Matching Cut} is \NP-complete for $P_r$-free graphs? Moreover, are {\sc Matching Cut} and  {\sc Disconnected Perfect Matching}, just like {\sc Perfect Matching Cut}, \NP-complete for graphs of girth~$g$, also if $g\geq 6$ (cf.~\cite{LL19} and~\cite{BP}, respectively)? In particular, if this holds for {\sc Disconnected Perfect Matching}, then {\sc Disconnected Perfect Matching} would be \NP-complete for $H$-free graphs whenever $H$ has a cycle, just like the other two problems.

First, as can be noticed from Theorems~\ref{t-main1}--\ref{t-main3}, our knowledge on the complexity of the three problems is different. In particular, does there exist a constant~$r$ such that {\sc Perfect Matching Cut} is \NP-complete for $P_r$-free graphs? For the other two problems such a constant exists. For {\sc Matching Cut} we improved the previous value $r=27$~\cite{Fe23} to $r=19$ and for {\sc Disconnected Perfect Matching} we showed that we can take
 $r=23$, addressing a question in~\cite{BP}. We expect that these values of $r$ might not be tight, but it does not seem straightforward to improve our current hardness constructions. 

Finally, let {\sc Maximum Matching Cut} be the problem of finding a matching cut with a maximum number of edges. Recall that {\sc Perfect Matching Cut} is the ``extreme'' variant of this problem, namely when we search for a matching cut with a maximum number of edges. Does there exist a graph $H$ such that {\sc Maximum Matching Cut} and {\sc Perfect Matching Cut} differ in complexity when restricted to $H$-free graphs? What is the complexity of {\sc Maximum Matching Cut} on $P_r$-free graphs?


\end{document}